\newtheorem{thm}{Theorem}[section]
\newtheorem{prop}[thm]{Proposition}
\newtheorem{lem}[thm]{Lemma}
\newtheorem{cor}[thm]{Corollary}
\newtheorem{convention}[thm]{Convention}
\theoremstyle{definition}
\newtheorem{dfn}[thm]{Definition}
\theoremstyle{remark}
\newtheorem{rem}[thm]{Remark}
\newtheorem*{acknowledgment}{Acknowledgements}
\newcommand{\thmref}[1]{%
    \hyperref[#1]{\ref*{#1}}
}
\newcommand{\C}{\mathbb{C}}
\newcommand{\N}{\mathbb{N}}
\newcommand{\R}{\mathbb{R}}
\title[Chern Character and Fermi point]
{Chern Character and Fermi point}
\author[K. Horie]{Kyouhei Horie}
\address{
 DEPARTMENT OF MATHEMATICS, INSTITUTE OF SCIENCE TOKYO, 
2-12-1 Ookayama, Meguro-ku, Tokyo, 152-8551, Japan.}
\email{horie.k.1e2d@m.isct.ac.jp}
\date{}
\begin{document}

\begin{abstract}
This paper expresses the Chern character for topological $K$-theory based on the formulation of the family of Fredholm operators, by using the points at which the Fredholm operator becomes singular (Fermi points). In particular, we explain that the odd Chern character can be thought of as a generalization of the spectral flow. As applications, we give elementary proofs of the evenness of the edge index and the bulk-edge correspondence for four-dimensional topological insulators with time-reversal symmetry of class AI.
\end{abstract}

\maketitle

\tableofcontents


\section{Introduction}
\label{sec:introduction}

\subsection{Topological $K$-theory and spectral flow}

Topological $K$-theory, which was introduced by Atiyah and Hirzebruch, is constructed by isomorphism classes of complex vector bundles \cite{A1}. 
The construction by using complex vectors is a formulation within the scope of finite-dimensional linear algebra, but there is also a construction via infinite dimensions. That is, the formulation of topological $K$-theory by using families of Fredholm operators acting on infinite-dimensional Hilbert spaces with a symmetry of Clifford algebras \cite{A-Si}.
The advantage of the Fredholm operator formulation is that it is suitable for twisted $K$-theory, which is a variant of topological $K$-theory.
Moreover, $K$-theory by using Fredholm operators appears naturally in a description of  the edge states of topological insulators.

\medskip

Since the Fredholm operator formulation involves infinite dimensions, it is generally difficult to directly calculate the $K$-group. However, there exists a feasible invariant called the spectral flow. The spectral flow becomes a complete invariant of the odd $K$-group of the circle $S^1$ \cite{Atiyah76}.
 That is
\begin{align*}
\text{sf}\colon K^1(S^1)\cong \mathbb{Z}.
\end{align*}
Roughly speaking, the spectral flow is a signed count of the number of times that the eigenvalues of a family of Fredholm operators on the circle $S^1$ change from positive to negative as the parameter $k$ moves around $S^1$. $(A$ rigours
 definition can be found in \cite{Phillips}.) Equivalently, one may interpret the spectral flow as the signed count of points on $S^1$ where the eigenvalues of this family transversely cross zero. In this paper, we generalize the condition of transverse crossing by introducing a concept of a \emph{sign coordinate}. This notion allows us to assign a consistent sign to a point where the spectrum intersects zero, even in cases where the crossing is not necessarily transverse. By using this concept, we develop an integer invariant which generalizes the spectral flow. Based on this generalized spectral flow, we define the set $F$ as the collection of points in a topological space $X$ where a family of Fredholm operators $A=\{A_x\}_{x \in X}$ parameterized by $X$ becomes singular, that is,
\begin{align*}
F = \{ x \in X \mid 0 \in \text{Spec}(A_x) \},
\end{align*}
where $\text{Spec}(A_x)$ denotes the spectrum of $A_x$. We refer to a point $x \in F$ at which a sign coordinate can be defined as a \emph{Fermi point}. With each Fermi point $x \in F$, we associate a sign, denoted as $\operatorname{sign}(x) \in \{\pm 1\}$, which corresponds to the contribution of $x$ to the generalized spectral flow.

We note that the notion of a Fermi point used in this paper differs from its conventional usage in condensed matter physics. Typically, the Fermi set refers to the set of momenta with a prescribed energy level, and isolated points in this set are referred to as Fermi points. In contrast, in this paper, we use the term Fermi point to refer specifically to those isolated points in the set $F$ at which a sign coordinate can be defined. Thus, the term is used in a way that deviates from its standard meaning in physics.

\subsection{Main result}

We present the main theorems of this paper, which express the Chern character in terms of Fermi points. Here, \(\mathcal{F}_0(\hat{\mathcal{H}})\) denotes the space of self-adjoint Fredholm operators of degree 1 acting on a \(\mathbb{Z}_2\)-graded Hilbert space \(\hat{\mathcal{H}}\), while \(Fred_{sa}\) denotes the space of self-adjoint Fredholm operators on an ungraded Hilbert space \(\mathcal{H}\), and \(Fred_{sa}^1\) denotes the non-contractible path-connected component of \(Fred_{sa}\). The Chern characters \(\operatorname{ch}_k\) and \(\operatorname{ch}_{k+\frac{1}{2}}\) are invariants associated with the $K$-theory classes in even and odd degrees, respectively. Precise definitions of these terms will be provided in later sections.

\begin{thm}[The Main Theorem for Even Degrees]\label{thm:even main theorem}
  Let $k$ be a non-negative integer. Let $X$ be a compact, oriented, $2k$-dimensional differentiable manifold, and \,$\hat{A}\colon X\to \mathcal{F}_{0}(\mathcal{\hat{H}})$ a continuous map. Then, the following holds.
\begin{enumerate}
\item When each point $x \in F$ is a Fermi point, $\sum_{x \in F}\operatorname{sign}(x)$ depends only on $[\hat{A}] \in K^0(X)$.
\item Under the assumption of $(1)$, the following equality holds.
\begin{align*}
\int_X \operatorname{ch}_{k}([\hat{A}]) = \sum_{x \in F} \operatorname{sign}(x) \in \mathbb{Z}.
\end{align*}
\end{enumerate} 
\end{thm}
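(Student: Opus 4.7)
The plan is to prove (2) directly by localizing the Chern-character integral to small neighborhoods of the Fermi points, and then to deduce (1) since the left-hand side is manifestly a homotopy invariant of $[\hat{A}] \in K^0(X)$. The essential observation is that outside of $F$ the family $\hat{A}_x$ is invertible and therefore represents the zero class in $K^0$, so $\operatorname{ch}_k([\hat{A}])$ admits a de Rham representative supported in an arbitrarily small neighborhood of the Fermi set.

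First I would perform a finite-dimensional reduction near each Fermi point $x_0$. Choose a neighborhood $U_{x_0}$ and an $\epsilon > 0$ such that $\pm\epsilon$ lies in the resolvent set of $\hat{A}_x$ for every $x \in U_{x_0}$. The spectral projection onto eigenvalues in $(-\epsilon,\epsilon)$ then defines a finite-rank $\mathbb{Z}_2$-graded subbundle $E \to U_{x_0}$ of the constant Hilbert bundle, on which $\hat{A}$ restricts to a finite-dimensional family $\hat{A}^{\mathrm{low}}_x$. The orthogonal complement carries an invertible family contributing nothing to $K^0$, so the class $[\hat{A}|_{U_{x_0}}]$ is represented by $\hat{A}^{\mathrm{low}}$ and the two Chern characters coincide. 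Since the Fermi-point hypothesis forces the points of $F$ to be isolated and $X$ is compact, $F$ is a finite set, and a partition-of-unity argument yields
\begin{align*}
\int_X \operatorname{ch}_k([\hat{A}]) = \sum_{x_0 \in F} \int_{U_{x_0}} \operatorname{ch}_k\bigl([\hat{A}^{\mathrm{low}}|_{U_{x_0}}]\bigr).
\end{align*}

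Next, the sign coordinate at $x_0$ should supply an oriented local chart on $U_{x_0}$ together with a trivialization of $E$ in which the reduced family $\hat{A}^{\mathrm{low}}$ takes a standard Clifford-type form in the local coordinates. A direct Chern--Weil computation on this local model produces an integral equal to $\pm 1$, with the sign determined by whether the orientation of the sign coordinate agrees with the ambient orientation of $X$. By definition this is $\operatorname{sign}(x_0)$; summing over Fermi points gives (2), and (1) follows at once.

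The main obstacle I anticipate is the final local calculation: matching the $K$-theoretic normalization of $\operatorname{ch}_k$ (with its characteristic factors of $(2\pi i)^{-k}$ and $k!$) to the normalization implicit in the definition of the sign coordinate, so that the local integral comes out to exactly $\pm 1$ rather than some other rational multiple. A secondary technical point is ensuring that the finite-dimensional reduction can be arranged compatibly with the sign coordinate, so that $\hat{A}^{\mathrm{low}}$ indeed takes the required standard form; if not, a preliminary homotopy within the space of families invertible outside $x_0$ would be needed before applying the model calculation.
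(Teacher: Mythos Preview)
Your localization strategy is sound and the overall skeleton matches the paper's: reduce to a finite Fermi set, localize the $K$-class to small disks around each Fermi point, and identify each local contribution as $\pm 1$ according to $\operatorname{sign}(x)$. Where you diverge is in the local step. You propose to compute the Chern--Weil form of the finite-dimensional reduction explicitly and integrate it, which is why you (correctly) flag the normalization constants as the main obstacle. The paper sidesteps this entirely: it first proves, via its analysis of sign coordinates and the explicit description of generators of $K^{0}(D^{2k},\partial D^{2k})$, that the local class $[\hat{A}|_{D_{x_0}}]$ equals $\operatorname{sign}(x_0)$ times the image of $1\in K^0(\{x_0\})$ under the $K$-theoretic push-forward $i_*^K$. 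It then invokes the commutativity of the square relating $i_*^K$, $i_*^H$, and the Chern character, so that $\operatorname{ch}_k([\hat{A}])$ is literally $\operatorname{sign}(x_0)$ times the fundamental class, whose integral is $1$ by definition. No curvature form is ever written down, and no factorial or $2\pi i$ needs to be tracked.

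Your route would work, but you would have to carry out the Chern--Weil integral for the Clifford model $\sum x_i\hat{\gamma}_i$ on $D^{2k}$ and check it against the paper's normalization of $\operatorname{ch}_k$, which is defined via the universal classes in $H^*(BU;\mathbb{Q})$ and the Newton polynomials rather than via differential forms. The paper's push-forward argument buys you exactly this: the normalization is absorbed into the compatibility diagram, and the only ``computation'' is that $i_*^H(1)$ is the volume class. Your secondary worry about arranging the finite-dimensional reduction compatibly with the sign coordinate is also handled differently in the paper: rather than deforming $\hat{A}^{\mathrm{low}}$ to standard form, the paper shows directly that the existence of a sign coordinate is equivalent to the local $K$-class being a generator, which is all that is needed.
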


\begin{thm}[The Main Theorem for Odd Degrees]\label{thm:odd main theorem}
 Let $k$ be a non-negative integer. Let $X$ be a compact, oriented, $2k+1$-dimensional differentiable manifold, and \,$A\colon X\to Fred_{sa}^1(\mathcal{H})$ a continuous map. Then, the following holds.
\begin{enumerate}
\item When each point $x \in F$ is a Fermi point, $\sum_{x \in F}\operatorname{sign}(x)$ depends only on $[A] \in K^1(X)$.
\item Under the assumption of $(1)$, the following equality holds.
\begin{align*}
\int_X \operatorname{ch}_{k+\frac{1}{2}}([A]) = \sum_{x \in F} \operatorname{sign}(x) \in \mathbb{Z}.
\end{align*}
\end{enumerate}  
\end{thm}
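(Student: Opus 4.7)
The plan is to establish Theorem \thmref{thm:odd main theorem} by localizing the Chern character near the Fermi set $F$: one represents $\operatorname{ch}_{k+\frac{1}{2}}([A])$ by a closed differential form supported in arbitrarily small neighborhoods of the Fermi points, and then evaluates the local contribution at each point using the canonical structure provided by a sign coordinate.

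First I would invoke part $(1)$, together with the homotopy invariance of $\int_X \operatorname{ch}_{k+\frac{1}{2}}$, to reduce $A$ to a convenient normal form. Since $F$ is compact and $A_x$ has a uniform spectral gap away from $F$, spectral projections produce a continuous finite-rank subbundle $E \subset X \times \mathcal{H}$ containing the near-zero spectrum, with $A$ invertible on its orthogonal complement. After this standard finite-rank reduction the problem becomes a calculation for a smooth field of self-adjoint endomorphisms of a finite-rank bundle $E$, and since both sides of the desired identity are invariant under stabilization nothing is lost.

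Next I would represent $\operatorname{ch}_{k+\frac{1}{2}}([A])$ by an explicit closed form. Choosing an odd continuous function $f \colon \mathbb{R} \to [-1,1]$ that equals $\pm 1$ outside a small interval around $0$, the unitary $U_x = \exp(i\pi f(A_x))$ equals a fixed constant outside an arbitrarily small neighborhood $V$ of $F$, and the form
\begin{align*}
\omega = c_k \, \operatorname{tr}\bigl((U^{-1}\,dU)^{2k+1}\bigr),
\end{align*}
with the standard normalization constant $c_k$, represents $\operatorname{ch}_{k+\frac{1}{2}}([A])$ in de Rham cohomology and is supported in $V$. Hence $\int_X \omega$ decomposes as a sum of integrals over small open balls $V_{x_0}$ around the Fermi points. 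The decisive step is the local computation at each $x_0 \in F$: by definition, a sign coordinate furnishes a chart around $x_0$ in which, up to a non-singular homotopy on the boundary sphere, the family is a pull-back of a canonical generator of $K^1(S^{2k+1}) \cong \mathbb{Z}$, and $\operatorname{sign}(x_0) \in \{\pm 1\}$ is the degree of this pull-back. The standard normalization of the odd Chern character on this generator then gives
\begin{align*}
\int_{V_{x_0}} \omega = \operatorname{sign}(x_0),
\end{align*}
and summing over the finite set $F$ completes the proof.

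The main obstacle is verifying that the sign assigned by the sign coordinate matches the local degree controlling the Chern character integral. This is an orientation comparison between the definition of the sign coordinate (introduced in earlier sections) and the normalization $c_k$ of $\operatorname{ch}_{k+\frac{1}{2}}$; the $k = 0$ case reduces to the classical identification of spectral flow with the winding number of the Cayley transform on $S^1$, and the general $k$ case will follow by the analogous local computation on $S^{2k+1}$ once the orientation convention is pinned down. A secondary technical point is ensuring that the finite-rank reduction can be performed compatibly with the sign coordinates at every Fermi point simultaneously, which amounts to checking that the near-zero spectral projection is continuous on a neighborhood of $F$; this is automatic for Fermi points because the sign coordinate forces the relevant eigenvalues to depend continuously on the parameter.
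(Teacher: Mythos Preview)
Your approach via explicit de Rham representatives and local degree computation is genuinely different from the paper's proof, which is entirely topological and never writes down a differential form. The paper reduces to the case $F = \{x\}$ and then uses the push-forward $i_*^K \colon K^0(\{x\}) \to K^1(X)$ together with its compatibility with the Chern character (Proposition~\ref{prop:chern character and push forward}): since $\operatorname{Ch} \circ i_*^K = i_*^H \circ \operatorname{Ch}$ and $i_*^H(1)$ is the volume class, one gets $\int_X \operatorname{ch}_{k+\frac{1}{2}}(i_*^K(1)) = 1$; the earlier Lemma~\ref{lem:4.13} then identifies $i_*^K(1)$ with $\operatorname{sign}(x)[A]$, finishing the argument. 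This buys the orientation comparison for free: the sign is packaged into the identification of $[A]$ with $\pm i_*^K(1)$ in relative $K$-theory, and the normalization of $\operatorname{ch}_{k+\frac{1}{2}}$ on the generator is exactly the statement that the Chern character commutes with suspension (Proposition~\ref{prop:Commute}).

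Your route can be made to work but has two real gaps as written. First, the global finite-rank subbundle $E \subset X \times \mathcal{H}$ does not exist in general: away from $F$ there is no uniform spectral gap of any prescribed size, so the spectral projection onto $(-\mu,\mu)$ need not have constant rank on all of $X$. The paper's finite-dimensional approximation (Proposition~\ref{prop:apporoximation vector bundle}) is strictly local near each Fermi point, which is all you actually need once $\omega$ is supported there, but your phrasing overstates what is available. Second, and more seriously, your formula $\omega = c_k\,\operatorname{tr}\bigl((U^{-1}\,dU)^{2k+1}\bigr)$ is not how the paper defines $\operatorname{ch}_{k+\frac{1}{2}}$; the paper's definition is the pullback of a universal class on $\mathcal{F}_1$ built from the suspension of $\operatorname{ch}_k^{\mathrm{univ}}$. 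Showing that your $\omega$ represents that class, with the correct constant $c_k$, is itself a normalization check equivalent to the ``main obstacle'' you flag at the end, so you have not actually bypassed it. The paper's push-forward argument is precisely the device that avoids ever performing this explicit comparison.
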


In particular, when considering the case of odd degrees with \(X = S^1\), the odd Chern character \(\operatorname{ch}_{\frac{1}{2}}([A])\) integrated over \(S^1\) coincides with the spectral flow.

\medskip

\subsection{Applications to Topological Insulators}

Topological insulators are quantum systems characterized by insulating bulk states and conducting edge states. A key feature of these systems is the \emph{bulk-edge correspondence}, which relates topological invariants of the bulk to those of the edge, a phenomenon that has been formulated in various settings (e.g., \cite{Avila}, \cite{Avron}, \cite{Bellissard}, \cite{Braverman}, \cite{Guillemin}, \cite{Hatsugai}, \cite{Kellendonk00}).

In this paper, we focus on four-dimensional topological insulators with time-reversal symmetry of class AI, corresponding to spinless or integer-spin quantum systems invariant under time reversal (class AI). These systems are described by a bulk Hamiltonian $\hat{H} \colon T^4 \to \operatorname{Herm}(\mathbb{C}^r)^*$, where $T^4$ is the four-dimensional torus representing the wave number space, and $\operatorname{Herm}(\mathbb{C}^r)^*$ denotes the set of invertible $r \times r$ Hermitian matrices. The edge Hamiltonian $\hat{H}^{\#} = {\{\hat{H}^{\#}(k)\}}_{k \in T^3}$, parameterized by the three-dimensional torus $T^3$, is constructed from $\hat{H}$ as a family of Toeplitz operators \cite{Douglas}.

For four-dimensional class AI topological insulators, we define the bulk index as the second Chern number $c_2(\hat{H})$ and the edge index as the odd Chern character $\int_{T^3} \operatorname{ch}_{\frac{3}{2}}([\hat{H}^{\#}])$. Under class AI symmetry, the edge index is known to be an even integer, and the bulk-edge correspondence is formulated as $c_2(\hat{H}) = -\int_{T^3} \operatorname{ch}_{\frac{3}{2}}([\hat{H}^{\#}])$.

By applying our main theorems, which express the Chern character in terms of Fermi points, we obtained the following results for four-dimensional class AI topological insulators:
\begin{enumerate}
    \item Assuming class AI time-reversal symmetry and a stronger condition, the edge index satisfies $\int_{T^3} \operatorname{ch}_{\frac{3}{2}}([\hat{H}^{\#}]) \in 2\mathbb{Z}$. (Proposition \ref{prop:evenness})
    \item The bulk-edge correspondence holds, i.e., $c_2(\hat{H}) = -\int_{T^3} \operatorname{ch}_{\frac{3}{2}}([\hat{H}^{\#}])$, for insulators defined on the torus $T^4$ with class AI symmetry. (Theorem \ref{BG}.)
\end{enumerate}

The method developed in this paper is intended for surface Hamiltonians whose zero-energy set is sufficiently nondegenerate so that the zero modes are isolated and admit sign coordinates. In particular, the method applies to situations where the low-energy behavior near each zero-energy point is modeled by a finite-dimensional Dirac-type local model, as in Proposition 4.28. It is therefore not designed for surface spectra with extended zero-energy loci such as Fermi surfaces of positive dimension.

\subsection{Outline}

This paper is organized as follows: In \S \ref{sec:Clifford algebra}, we summarize the Clifford algebras and their representations necessary for the formulation of $K$-groups via Fredholm operators. In \S \ref{sec:$K$-theory}, we discuss the $K$-theory associated with families of Fredholm operators. In \S\S\ref{subsec:Definition of $K$-theory}, we first define the separable infinite-dimensional Hilbert space $\mathcal{H}_n$ endowed with a symmetry of the Clifford algebra $Cl_n$. Subsequently, we provide the definition of the $K$-group based on families of Fredholm operators acting on $\mathcal{H}_n$. Next, in \S\S \ref{subsec:Description of the generator}, we describe the suspension isomorphism of $K$-groups in the context of the Fredholm operator formulations, as developed by Atiyah and Singer \cite{A-Si}. As an application of the suspension isomorphism, we explicitly characterize the generators of $K^n(D^n, \partial D^n)$. Since detailed descriptions of the generators of $K^n(D^n, \partial D^n)$ in the Fredholm operator formulation are rarely found in the literature, we elaborate on this point. In \S\S \ref{subsec:Chern character and Push-forward}, we define the Chern character by using Fredholm operators and introduce the push-forward map along a point insertion. In \S\S\ref{sub:Approximation of family of Fredholm operators}, we explain the finite-dimensional approximation of Fredholm operators. By using this approximation, we define a sign coordinate and a Fermi point in \S\S \ref{sub:Sign coorinate}. In \S\S \ref{sub:Proof of the main theorem}, we provide the proof of the main theorem. In \S\S \ref{subsec:Specific calculation example}, we present specific computational examples derived from the main theorem. In \S\S \ref{subsec:Appurication}, we state the assumptions of Application $(1)$ $($Proposition \ref{prop:evenness}$)$ and provide its proof. In \S\S \ref{subsec:Real vector bundle}, we introduce the tools necessary for proving the bulk-edge correspondence. Specifically, we define real vector bundles in the sense of Atiyah and $\mathbb{Z}_2$-equivariant homotopy sets. After this point, we refer to such objects as “Real” vector bundles. Building on this, in \S\S \ref{subsec:Proof of the bulk-edge correspondence}, we prove Application $(2)$, namely the bulk-edge correspondence for four-dimensional topological insulators in class AI. Finally, in \S \ref{sec:local model}, we detail the spectral analysis of local models used in the specific computational examples of \S\S \ref{subsec:Specific calculation example} and the proof of the bulk-edge correspondence in \S\S \ref{subsec:Proof of the bulk-edge correspondence}.

Throughout this paper, we denote the identity map on the set $X$ by $1_X$. Additionally, we denote the interior of the topological space $X$ by $\text{int}(X)$.

\medskip

\begin{acknowledgment}
The author would like to thank my supervisor, Kiyonori Gomi. The author is grateful to Yuki Matsuoka for helpful discussions and comments. This work was supported by JST SPRING, Japan Grant Number JPMJSP2180.
\end{acknowledgment}


\section{Clifford algebra and its representation}
\label{sec:Clifford algebra}

We summarize the basics about Clifford algebra. The references include \cite{A-B-S}, \cite{Lawson}, and \cite{Furuta}.

\begin{dfn}
Let $n \in \mathbb{N}$ be a natural number and \(\delta_{ij}\) denote the Kronecker delta. The Clifford algebra $Cl_{n}$ is an associative algebra over $\mathbb{R}$ with generators $e_{1},...,e_{n}$ which satisfy the following relations:
\begin{align*}
e_ie_j+e_je_i=2\delta_{ij}.
\end{align*}
Let $Cl_0=\mathbb{R}.$
\end{dfn}

\begin{dfn}
\begin{enumerate}
\item Let \(\hat{V}\) be a vector space, and let \(V^0, V^1 \subset \hat{V}\) be subspaces. 
In this case, if \(\hat{V}\) admits a direct sum decomposition \(\hat{V} = V^0 \oplus V^1\) 
by \(V^0\) and \(V^1\), then \(\hat{V}\) is said to be \(\mathbb{Z}_2\)-graded by \(V^0\) 
and \(V^1\). A vector space \(\hat{V}\) that is \(\mathbb{Z}_2\)-graded is called a 
\(\mathbb{Z}_2\)-graded vector space.

\item A degree-0 linear map \(\hat{f}: \hat{A} \to \hat{B}\) between two \(\mathbb{Z}_2\)-graded 
vector spaces \(\hat{A}\) and \(\hat{B}\) is defined as a linear map that satisfies 
\(\hat{f}(A^0) \subset B^0\) and \(\hat{f}(A^1) \subset B^1\).
\item A degree-1 linear map \(\hat{f}: \hat{A} \to \hat{B}\) between two \(\mathbb{Z}_2\)-graded 
vector spaces \(\hat{A}\) and \(\hat{B}\) is defined as a linear map that satisfies 
\(\hat{f}(A^0) \subset B^1\) and \(\hat{f}(A^1) \subset B^0\).
\end{enumerate}
\end{dfn}

In general, a $\mathbb{Z}_2$-graded vector space $\hat{V}$ is the direct sum of the parts of degree $0$ and $1$: $\hat{V}=V^0 \oplus V^1$.
This is equivalent to considering the eigendecomposition of the involution $\hat{\epsilon} \colon \hat{V} \to \hat{V}$ by the eigenspaces 
$V^i=\ker(\hat{\epsilon}-(-1)^i)$. In this case, the linear mapping $\hat{f} \colon \hat{V} \to \hat{V}$ is of degree $0$ (resp.\ $1$), which is equivalent to the commutativity (or anticommutativity) of $\hat{f}$ and $\hat{\epsilon}$.

\begin{dfn}
Let $n \in \mathbb{N}$ be a natural number.
\begin{enumerate}
\item A pair $(V,\gamma)$ is an ungraded (complex) representation of $Cl_{n}$ if $V$ is a complex vector space and $\gamma=(\gamma_1,...,\gamma_n)$ are complex linear maps on $V$ such that 
\begin{align*}
\gamma_i \circ \gamma_j+\gamma_j \circ \gamma_i=2\delta_{ij}1_V,
\end{align*}
where $1_V$ is the identity map on $V$.
\item A pair $(\hat{V}, \hat{\gamma})$ is a $\mathbb{Z}_{2}$-graded (complex) representation of $Cl_n$ if $\hat{V}$ is $\mathbb{Z}_2$-graded complex vector space and 
$\hat{\gamma}=(\hat{\gamma_1},...,\hat{\gamma_n})$ are complex linear maps of degree $1$ on $\hat{V}$ such that 
\begin{align*}
\hat{\gamma_i} \circ \hat{\gamma_j}+\hat{\gamma_j} \circ \hat{\gamma_i}=2\delta_{ij}1_{\hat{V}},
\end{align*}
where $1_{\hat{V}}$ is the identity map on $\hat{V}$.
\end{enumerate}
Here, $V$ and $\hat{V}$ are called representation spaces. In either case, a representation that has no nontrivial subrepresentation is called an irreducible representation.
\end{dfn}

\begin{rem}
Without loss of generality, we can assume that $V$ and $\hat{V}$ have Hermitian inner products and that $\gamma_i$ and $\hat{\gamma_i}$ are unitary operators. 
In that case, $\gamma_i$ and $\hat{\gamma_i}$ are self-adjoint, since $\gamma_i^2=1$ and $\hat{\gamma_i}^2=1$.
\end{rem}

\begin{dfn}
\begin{enumerate}
\item
An ungraded representation $(V, \gamma)$ and $(V^{\prime}, \gamma^{\prime})$ of $Cl_n$ are equivalent if there is a linear isomorphism 
$\phi \colon V\to V^{\prime}$ that preserves the actions of $Cl_{n}$.
\item
A $\mathbb{Z}_2$-graded representation $(\hat{V}, \hat{\gamma})$ and $(\hat{V}^{\prime}, \hat{\gamma}^{\prime})$ of $Cl_n$ are equivalent if there is a linear isomorphism $\hat{\phi} \colon \hat{V}\to \hat{V}^{\prime}$  of degree 0 that preserves the actions of $Cl_{n}$.
\end{enumerate}
\end{dfn}

Let \( (\hat{V} = V^0 \oplus V^1,\, \hat{\gamma} = (\hat{\gamma}_1, \ldots, \hat{\gamma}_n)) \) be a \( \mathbb{Z}_2 \)-graded representation of \( Cl_n \) for a natural number \( n \geq 2 \). We can then consider $V^{0}=V^{1}=V$, and assume that the involution $\hat{\epsilon}$ and $\hat{\gamma}_n$  are of the following form:
\begin{align*}
\hat{\epsilon}&=
\begin{pmatrix} 
  1_V & 0 \\
  0 & -1_V  
\end{pmatrix} ,&
\hat{\gamma}_n=
\begin{pmatrix} 
  0 & -\sqrt{-1} \\
  \sqrt{-1} & 0
\end{pmatrix}.
\end{align*}
Then, each $\hat{\gamma}_i$ has the following form:
\begin{align*}
\hat{\gamma}_i=
\begin{pmatrix}
0 & -\gamma_i\\
\gamma_i & 0
\end{pmatrix}
\end{align*}
for $i=1,\ldots ,n-1$.

The linear maps $\gamma_1,..., \gamma_n$ obtained here give a representation $V$ of $Cl_{n-1}$. However, it is obtained by specifying the concrete form of  $\gamma_n$ and depends on its choice.

\begin{prop}
Let $\nu_n$ be the number of equivalence classes of the irreducible representations of $Cl_n$, and $\hat{\nu}_n$ that of the graded irreducible representations. Then, 
\begin{align*}
\nu_n&=
 \begin{cases}
    1, & (n=0 \mod 2) \\
    2,      & (n=1 \mod 2)
  \end{cases}&
\hat{\nu}_n=
\begin{cases}
    2, & (n=0 \mod 2) \\
    1.      & (n=1 \mod 2)
  \end{cases}
 \end{align*}
\end{prop}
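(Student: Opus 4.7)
The plan is to establish the two counts separately: first compute $\nu_n$ from the complex algebra structure of $Cl_n\otimes_{\mathbb{R}}\mathbb{C}$, and then deduce $\hat{\nu}_n$ from $\nu_{n-1}$ by exploiting the block decomposition that was just set up in the paragraph immediately preceding the proposition.

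For $\nu_n$, I would invoke the standard structure theorem for complexified Clifford algebras, namely
\begin{align*}
Cl_{2k}\otimes_{\mathbb{R}}\mathbb{C} &\cong M_{2^{k}}(\mathbb{C}), \\
Cl_{2k+1}\otimes_{\mathbb{R}}\mathbb{C} &\cong M_{2^{k}}(\mathbb{C})\oplus M_{2^{k}}(\mathbb{C}).
\end{align*}
These can be established by induction on $k$ using the elementary periodicity $Cl_{n+2}\otimes\mathbb{C}\cong (Cl_n\otimes\mathbb{C})\otimes_{\mathbb{C}} M_2(\mathbb{C})$, together with the base cases $Cl_0\otimes\mathbb{C}=\mathbb{C}$ and $Cl_1\otimes\mathbb{C}\cong\mathbb{C}\oplus\mathbb{C}$ given by the eigenspaces of $e_1$. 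Since ungraded complex representations of $Cl_n$ coincide with $\mathbb{C}$-linear modules over $Cl_n\otimes\mathbb{C}$, and a full matrix algebra $M_N(\mathbb{C})$ admits a unique irreducible module up to equivalence while a direct sum of two copies admits exactly two, this yields $\nu_{2k}=1$ and $\nu_{2k+1}=2$.

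For $\hat{\nu}_n$, I would argue that the construction spelled out just before the proposition sets up a bijection between equivalence classes of $\mathbb{Z}_2$-graded irreducible $Cl_n$-representations and equivalence classes of ungraded irreducible $Cl_{n-1}$-representations, whenever $n\geq 2$. In one direction, after normalizing $\hat\epsilon$ and $\hat\gamma_n$ to the stated standard form, the off-diagonal blocks $\gamma_1,\ldots,\gamma_{n-1}$ satisfy the defining relations of $Cl_{n-1}$ on $V$. Conversely, given any ungraded representation $(V,\gamma)$ of $Cl_{n-1}$, one reconstructs $(\hat V,\hat\gamma)$ on $\hat V=V\oplus V$ by the same block formulas. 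Irreducibility is preserved in both directions: a graded $Cl_n$-invariant subspace $\hat{W}\subset\hat{V}$ necessarily takes the form $W\oplus W$ for some $\gamma$-invariant $W\subset V$, and vice versa. Hence $\hat{\nu}_n=\nu_{n-1}$ for $n\geq 2$, which combined with the direct computation $\hat{\nu}_0=2$ and $\hat{\nu}_1=1$ (the graded structure on a $Cl_0$-module is a choice of splitting, and a graded $Cl_1$-module is determined by the degree-$1$ unitary involution) gives the stated formula.

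The main obstacle is checking that the assignment in the second step is well defined on equivalence classes, independent of the choice of basis that puts $\hat\epsilon$ and $\hat\gamma_n$ into the standard form. This reduces to showing that any two such standardizations are related by a degree-$0$ unitary conjugation; equivalently, one must verify that within a fixed graded equivalence class, the induced $Cl_{n-1}$-module structure on $V$ is determined up to ungraded equivalence. I would resolve this by observing that the involution $\hat\epsilon$ and the anticommuting unitary $\hat\gamma_n$ together generate an action of $Cl_{0,1}\oplus Cl_{0,1}$-type on $\hat V$ whose standard forms are all conjugate by degree-$0$ unitaries; any such conjugation carries one off-diagonal data $\{\gamma_i\}$ to another by an isomorphism of $Cl_{n-1}$-representations.
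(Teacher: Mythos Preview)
Your argument is correct and supplies the details the paper omits: the paper's own proof is simply ``See \cite{A-B-S}'', so there is nothing to compare beyond noting that your two-step strategy (structure of $Cl_n\otimes_{\mathbb R}\mathbb C$ for $\nu_n$, then the graded/ungraded correspondence $\hat\nu_n=\nu_{n-1}$) is exactly the standard route taken in Atiyah--Bott--Shapiro and in Lawson--Michelsohn, and is precisely what the paragraph preceding the proposition is setting up.

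One stylistic remark on your treatment of the ``main obstacle'': the phrase ``$Cl_{0,1}\oplus Cl_{0,1}$-type'' is opaque and not quite the right bookkeeping. The cleaner statement is that once $\hat\epsilon$ and $\hat\gamma_n$ are in standard form, any degree-$0$ unitary preserving both must be block-diagonal $\mathrm{diag}(u_0,u_1)$ with $u_0=u_1$ (commutation with the standard $\hat\gamma_n$ forces this), and conjugation by $\mathrm{diag}(u,u)$ sends each off-diagonal block $\gamma_i$ to $u\gamma_i u^{-1}$, which is an ungraded $Cl_{n-1}$-equivalence. That is the whole content of well-definedness; I would phrase it this way rather than invoking an auxiliary Clifford algebra. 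Also note that your bijection argument already works for $n\ge 1$ (normalizing $\hat\gamma_1$ leaves a bare $Cl_0$-module $V$), so only $\hat\nu_0$ genuinely needs a separate check.
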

When there are two equivalence classes of irreducible representations, their dimensions are equal.
Furthermore, if the dimension of the irreducible representation of $Cl_n$ is $d_n$ and the dimension of the graded irreducible representation is $\hat{d}_n$, then the following holds:
\begin{align*}
d_1&=1, &d_2&=2, &d_{n+2}&=2d_n,\\
\hat{d}_1&=2, &\hat{d}_2&=2, &\hat{d}_{n+2}&=2\hat{d}_n.
\end{align*}
\begin{table}[h]
    \centering
    \label{tab:hogehoge}
   \begin{tabular}{|c|c|c|c|c|c|c|c|c|c|c|c|}
        \hline
        $n$ & $1$ & $2$ & $3$ & $4$ & $5$ & $6$ & $\cdots$ & $2k$ & $2k+1$ \\ \hline
     $d_n$ & 1 & 2 & 2 & 4 & 4 & 8 & $\cdots$ & $2^k$ & $2^k$ \\ \hline
     $\hat{d}_n$ & 2 & 2 & 4 & 4 & 8 & 8 & $\cdots$ &  $ 2^k $  & $ 2^{k+1} $  \\ \hline
    \end{tabular}
\end{table}
\begin{proof}
See \cite{A-B-S}.
\end{proof}

\begin{dfn}\label{tensor product}
Let $(\hat{V}, \hat{\gamma})$ and $(\hat{V^{\prime}}, \hat{\gamma}^{\prime})$ be $\mathbb{Z}_2$-graded representations of $Cl_n$ and $Cl_m$, respectively. Furthermore, let $\epsilon$ and $\epsilon^{\prime}$ be the involutions that define the $\mathbb{Z}_2$-gradings of $\hat{V}$ and $\hat{V}^{\prime}$, respectively. We then define the $\mathbb{Z}_2$-graded tensor product representation of the two $\mathbb{Z}_{2}$-graded representations $(\hat{V}, \hat{\gamma})$ and $(\hat{V^{\prime}}, \hat{\gamma}^{\prime})$ by the following pair.

\begin{align*}
(\hat{V}\otimes \hat{V^{\prime}}, ((\hat{\gamma_{1}}\otimes 1_{\hat{V^{\prime}}},...,\hat{\gamma_{n}}\otimes 1_{\hat{V^{\prime}}}), (1_{\hat{V}}\otimes \hat{\gamma^{\prime}}_{1},..., 1_{\hat{V}}\otimes \hat{\gamma^{\prime}}_{m})).
\end{align*}

Here, the $\mathbb{Z}_2$-grading of $\hat{V} \otimes \hat{V}^{\prime}$ is given by $\epsilon \otimes \epsilon^{\prime}$. Additionally, the product is defined as follows for $i, j = 1,...,n$ and $k, l=1,..., m$.

\begin{align*}
&\hat{\gamma}_i \otimes 1_{\hat{V^{\prime}}} \circ \hat{\gamma}_j \otimes 1_{\hat{V^{\prime}}}=\hat{\gamma_i}\circ \hat{\gamma_j}\otimes 1_{\hat{V^{\prime}}},\\
&\hat{\gamma_i} \otimes 1_{\hat{V^{\prime}}} \circ 1_{\hat{V}}\otimes \hat{\gamma^{\prime}_k}=\hat{\gamma_i} \otimes \hat{\gamma^{\prime}}_{k},\\
& 1_{\hat{V}} \otimes \hat{\gamma^{\prime}}_k \circ \hat{\gamma}_i \otimes 1_{\hat{V^{\prime}}}=-\hat{\gamma}_i \otimes \hat{\gamma^{\prime}}_k,\\
& 1_{\hat{V}} \otimes \hat{\gamma^{\prime}}_k \circ 1_{\hat{V}} \otimes \hat{\gamma^{\prime}}_l=1_{\hat{V}} \otimes \hat{\gamma^{\prime}}_k \circ \hat{\gamma^{\prime}}_l.
\end{align*}
\end{dfn}

\begin{prop}\label{prop:representation}
The following holds for the $\mathbb{Z}_2$-graded tensor product representation.
\begin{enumerate}
\item The isomorphism $Cl_n \otimes Cl_m \cong Cl_{n+m}$ holds. Furthermore, if $\hat{V}$ and $\hat{V}^{\prime}$ are $\mathbb{Z}_2$-graded representations of $Cl_n$ and $Cl_m$, respectively, then the $\mathbb{Z}_2$-graded tensor product representation $\hat{V} \otimes \hat{V}^{\prime}$ becomes a $\mathbb{Z}_2$-graded representation of $Cl_{n+m}$.
\item Suppose that $\hat{V}$ and $\hat{V}^{\prime}$ are $\mathbb{Z}_2$-graded irreducible representations of $Cl_{2n}$ and $Cl_{2m}$, respectively. Then, the $\mathbb{Z}_2$-graded tensor product representation $\hat{V} \otimes \hat{V}^{\prime}$ becomes a $\mathbb{Z}_2$-graded irreducible representation of $Cl_{2n+2m}$
\end{enumerate}
\end{prop}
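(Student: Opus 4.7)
The plan is to prove part (1) by constructing the algebra isomorphism on generators and checking that the defining relations of the $\mathbb{Z}_2$-graded tensor product enforce precisely the Clifford relations, and then to prove part (2) by a dimension count combined with the semisimplicity of Clifford representations and the fact that all irreducible $\mathbb{Z}_2$-graded representations of $Cl_{2k}$ have the same dimension $\hat{d}_{2k}$.

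For part (1), I would first construct an algebra map $\Phi\colon Cl_{n+m}\to Cl_n\otimes Cl_m$ by sending $e_i\mapsto e_i\otimes 1$ for $1\le i\le n$ and $e_{n+j}\mapsto 1\otimes e_j$ for $1\le j\le m$. The relation $e_i^2=1$ in each factor yields $(e_i\otimes 1)^2=1\otimes 1$ and $(1\otimes e_j)^2=1\otimes 1$, while the cross relation follows directly from the multiplication rules in the definition of the $\mathbb{Z}_2$-graded tensor product: $(e_i\otimes 1)(1\otimes e_j)=e_i\otimes e_j$ and $(1\otimes e_j)(e_i\otimes 1)=-e_i\otimes e_j$, so $\{e_i\otimes 1,\,1\otimes e_j\}=0$. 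Hence $\Phi$ is a well-defined algebra homomorphism. Both algebras have dimension $2^{n+m}$, and the images of the standard monomial basis of $Cl_{n+m}$ span $Cl_n\otimes Cl_m$, giving the isomorphism. The representation statement is then checked directly from the formulas in the definition: the computations listed there show that the operators $\hat{\gamma}_i\otimes 1_{\hat{V}'}$ and $1_{\hat{V}}\otimes \hat{\gamma}'_k$ satisfy exactly the Clifford relations for $Cl_{n+m}$, and each is of degree $1$ with respect to the grading $\hat{\epsilon}\otimes\hat{\epsilon}'$.

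For part (2), the strategy is to compare dimensions. By the tables preceding the proposition, $\dim_{\mathbb{C}}\hat{V}=\hat{d}_{2n}=2^n$ and $\dim_{\mathbb{C}}\hat{V}'=\hat{d}_{2m}=2^m$, so
\begin{equation*}
\dim_{\mathbb{C}}(\hat{V}\otimes\hat{V}')=2^{n+m}=\hat{d}_{2(n+m)}.
\end{equation*}
By the same tables, every irreducible $\mathbb{Z}_2$-graded representation of $Cl_{2(n+m)}$ has dimension exactly $\hat{d}_{2(n+m)}=2^{n+m}$ (there are $\hat{\nu}_{2(n+m)}=2$ equivalence classes, but both have this same dimension). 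Using the Hermitian/unitarity remark that we may equip any representation with an inner product for which the generators are self-adjoint, any $\mathbb{Z}_2$-graded representation of $Cl_{2(n+m)}$ decomposes as an orthogonal direct sum of irreducible $\mathbb{Z}_2$-graded subrepresentations. If $\hat{V}\otimes\hat{V}'$ were not irreducible, it would contain a proper graded subrepresentation, which itself would contain an irreducible subrepresentation of dimension $2^{n+m}$; this is impossible since the ambient space already has dimension $2^{n+m}$. Hence $\hat{V}\otimes\hat{V}'$ is irreducible.

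The main obstacle is the dimension-count step for part (2): one must be sure that a finite-dimensional $\mathbb{Z}_2$-graded Clifford representation is completely reducible, so that the dimension bound actually rules out nontrivial subrepresentations. Once this is observed (via the unitarity remark, which produces orthogonal complements that are automatically subrepresentations), the rest reduces to the arithmetic identity $\hat{d}_{2n}\cdot\hat{d}_{2m}=\hat{d}_{2n+2m}$ read off from the tables.
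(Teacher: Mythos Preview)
Your argument is correct. The paper itself does not give a proof: it simply refers to \cite{A-B-S} and \cite{Lawson} for (1) and to \cite{Furuta} for (2). Your proposal therefore supplies what the paper omits. The construction of $\Phi$ on generators together with the dimension count is the standard proof of (1), and your treatment of (2) via the identity $\hat{d}_{2n}\cdot\hat{d}_{2m}=2^{n}\cdot 2^{m}=2^{n+m}=\hat{d}_{2(n+m)}$ combined with complete reducibility is a clean and self-contained route. The one point worth making explicit is that the orthogonal complement of a graded subrepresentation is again graded (because the grading involution $\hat{\epsilon}\otimes\hat{\epsilon}'$ is unitary), so that the unitarity remark really does yield complete reducibility in the $\mathbb{Z}_2$-graded sense; once that is noted, your dimension argument is airtight.
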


\begin{proof}
For $(1)$, see \cite{A-B-S} and \cite{Lawson}. $(2)$ is described in \cite{Furuta}.

\end{proof}

\begin{convention}\label{conv:std-cliff}
We fix once and for all the following $\mathbb{Z}_2$-graded irreducible representation
of $Cl_2$:
\[
\hat S^{(2)}=\mathbb{C}^2,\qquad
\hat\epsilon=
\begin{pmatrix}1&0\\0&-1\end{pmatrix},\qquad
\hat\gamma_1=
\begin{pmatrix}0&1\\1&0\end{pmatrix},\quad
\hat\gamma_2=
\begin{pmatrix}0&-\sqrt{-1}\\ \sqrt{-1}&0\end{pmatrix}.
\]
This is the standard choice corresponding to the clutching function
$z=x_1+\sqrt{-1}x_2$ on $S^1$, hence to the Bott generator.

For $n\ge 1$, we define the standard $\mathbb{Z}_2$-graded irreducible $Cl_{2n}$ representation
by the graded tensor power (Definition~$\ref{tensor product}$)
\[
\hat S^{(2n)}=\underbrace{\hat S^{(2)}\;\hat\otimes\;\cdots\;\hat\otimes\;\hat S^{(2)}}_{n\text{ times}}.
\]
We denote by $\hat\epsilon^{(2n)}$ and $\hat\gamma^{(2n)}_1,\dots,\hat\gamma^{(2n)}_{2n}$
the induced grading and Clifford generators on $\hat S^{(2n)}$.

For odd degrees, we fix the standard (ungraded) $Cl_{2n-1}$ representation as follows.
Choose a basis so that $\hat\epsilon^{(2n)}=\mathrm{diag}(I_{2^{n-1}},-I_{2^{n-1}})$.
Then for $1\le i\le 2n-1$ we can write
\[
\hat\gamma^{(2n)}_i=
\begin{pmatrix}0&\gamma_i\\ \gamma_i&0\end{pmatrix},
\]
and we \emph{define} the standard ungraded $Cl_{2n-1}$-action by these matrices
$\gamma_1,\dots,\gamma_{2n-1}$ (cf.\ Remark~$\ref{rem:3}$).
\end{convention}

\begin{rem}\label{rem:std-ungraded-Cl1-Cl3}
We record the \emph{standard} ungraded irreducible Clifford representations used in odd sign coordinates.

\smallskip
\noindent
(1) \textbf{The case $Cl_{1}$.}
There are two inequivalent ungraded irreducible $Cl_{1}$ representations $V_{\pm}=\C$ with $\gamma_{1}=\pm 1$.
Throughout the paper, the \emph{standard} choice is $V_{+}$ (i.e.\ $\gamma_{1}=+1$), which is induced from
Convention~\ref{conv:std-cliff} (the $n=1$ odd representation obtained from $\hat S^{(2)}$ by the off-diagonal convention).

\smallskip
\noindent
(2) \textbf{The case $Cl_{3}$.}
For odd sign coordinates in dimension $3$, we use the standard ungraded $Cl_{3}$-action on $\C^{2}$
induced from Convention~\ref{conv:std-cliff} with $n=2$, namely the Pauli matrices
\[
\gamma_{1}=
\begin{pmatrix}
0 & 1\\
1 & 0
\end{pmatrix},\quad
\gamma_{2}=
\begin{pmatrix}
0 & -\sqrt{-1}\\
\sqrt{-1} & 0
\end{pmatrix},\quad
\gamma_{3}=
\begin{pmatrix}
1 & 0\\
0 & -1
\end{pmatrix}.
\]
\end{rem}


\section{Formulation of topological $K$-theory via Fredholm operators}
\label{sec:$K$-theory}


\subsection{Definition of the $K$-group}
\label{subsec:Definition of $K$-theory}

In this section, we summarize the formulation of $K$-theory by using Fredholm operators, based on \cite{A-Si}. For other formulations of topological $K$-theory, see, for example, \cite{A1} and \cite{Karoubi}.

\begin{dfn}
For $n \geq 0$, let $\hat{\mathcal{H}}_n$ be a separable infinite-dimensional Hilbert space with the following properties:
\begin{enumerate}
\item It has $\mathbb{Z}_{2}$-grading and contains all $\mathbb{Z}_{2}$-graded irreducible representations of $Cl_n$ with infinite multiplicity. When $n = 0$, the parts of degree $0$ and $1$ are infinite-dimensional.
\item The above $\mathbb{Z}_{2}$-graded representation of $Cl_n$ can be extended to a $\mathbb{Z}_{2}$-graded representation of $Cl_{n+1}$.
\end{enumerate}
\end{dfn}

\begin{rem}
\begin{enumerate}
\item For any $n \geq 0$, $\hat{\mathcal{H}}_n$ exists. In fact, $Cl_{n+1}\otimes l^2$ is a separable infinite-dimensional Hilbert space that satisfies the properties of 
$\hat{\mathcal{H}}_n$
\item  $\hat{\mathcal{H}}_n$ is isomorphic to each other by a bounded operator which preserves the action of $Cl_{n+1}$ and the inner product, and the whole such isomorphisms form a contractible space with respect to the topology given by the operator norm. (This  is a consequence of Kuiper's theorem \cite{Kuiper}.)
Thus, $\hat{\mathcal{H}}_n$ is uniquely defined  up to isomorphism.
\end{enumerate}
\end{rem}

\begin{dfn}
Let $Fred_n=Fred_n(\hat{\mathcal{H}}_n)$ be the set of Fredholm operators $A \colon \hat{\mathcal{H}}_n \to \hat{\mathcal{H}}_n$ that are self-adjoint, degree 1, and anticommute with the actions $\gamma_1, \gamma_2, \ldots, \gamma_n$ of $Cl_n$. Then, we topologize $Fred_n$ by the operator norm.
\end{dfn}

\begin{lem}\label{lem:3.3}
For any $n \geq 0$, the following map is a homeomorphism:
\begin{align*}
&\phi \colon Fred_n(\hat{\mathcal{H}}_n)\approx Fred_{n+2}(\hat{\mathcal{H}}\otimes \hat{S}), &\hat{A} \mapsto \hat{A}\otimes 1.
\end{align*}
Here, $\hat{S}$ is the standard $\mathbb{Z}_2$-graded irreducible $Cl_2$ representation fixed in Convention~\ref{conv:std-cliff}.
\end{lem}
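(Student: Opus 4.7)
The plan is to show $\phi$ is a continuous, norm-preserving bijection, with the main technical point being surjectivity. First I would check well-definedness and continuity. Since $\hat{S}$ is finite-dimensional and $\mathbb{Z}_2$-graded, $\hat{A} \otimes 1$ is a bounded operator with $\|\hat{A} \otimes 1\| = \|\hat{A}\|$; it inherits self-adjointness from $\hat{A}$, and its kernel (resp.\ cokernel) equals $\ker \hat{A} \otimes \hat{S}$ (resp.\ $\operatorname{coker} \hat{A} \otimes \hat{S}$), so it is Fredholm. Using the sign rule for the graded tensor product, a direct check shows $\hat{A} \otimes 1$ has degree $1$, anticommutes with $\gamma_i \otimes 1$ for $i = 1, \dots, n$ (since $\hat{A}$ does), and anticommutes with $1 \otimes \hat{\gamma}'_j$ for $j = 1, 2$ (the required minus sign coming from $\hat{A}$ being of odd degree). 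The identity $\|\phi(\hat{A})\| = \|\hat{A}\|$ makes $\phi$ an isometric embedding, hence continuous and injective.

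For surjectivity I would use the explicit model $\hat{S} = \mathbb{C} \oplus \mathbb{C}$ with $\hat{\gamma}'_1, \hat{\gamma}'_2, \hat{\epsilon}'$ given by the Pauli matrices $\sigma_x, \sigma_y, \sigma_z$, identifying $\hat{\mathcal{H}}_n \otimes \hat{S}$ with $\hat{\mathcal{H}}_n \oplus \hat{\mathcal{H}}_n$ whose grading operator is $\bigl(\begin{smallmatrix}\hat{\epsilon} & 0 \\ 0 & -\hat{\epsilon}\end{smallmatrix}\bigr)$. Any $\hat{B} \in Fred_{n+2}(\hat{\mathcal{H}}_n \otimes \hat{S})$ is then a block matrix $\bigl(\begin{smallmatrix}a & b \\ c & d\end{smallmatrix}\bigr)$ of bounded operators on $\hat{\mathcal{H}}_n$, and the defining conditions translate into linear relations on the blocks: self-adjointness gives $a^* = a$, $d^* = d$, $c = b^*$; degree $1$ forces $a, d$ to be odd and $b, c$ to be even with respect to $\hat{\epsilon}$; anticommutation with $1 \otimes \hat{\gamma}'_1 = \bigl(\begin{smallmatrix}0 & \hat{\epsilon} \\ \hat{\epsilon} & 0\end{smallmatrix}\bigr)$ yields $a = d$ and $b = -c$; anticommutation with $1 \otimes \hat{\gamma}'_2 = \bigl(\begin{smallmatrix}0 & -i\hat{\epsilon} \\ i\hat{\epsilon} & 0\end{smallmatrix}\bigr)$ yields $b = c$. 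Combining, $b = c = 0$ and $a = d$. The diagonal block $a$ is then self-adjoint, of degree $1$, Fredholm (its kernel and cokernel sit inside those of $\hat{B}$), and anticommutes with $\gamma_1, \dots, \gamma_n$ (inherited from the block-diagonal structure of each $\gamma_i \otimes 1$). Thus $\hat{B} = a \otimes 1 = \phi(a)$.

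Finally, $\phi^{-1}$ is continuous because $\|a\| = \|\phi(a)\|$, so $\phi$ is a homeomorphism. The main obstacle I expect is the surjectivity step, where one must carefully unpack the graded tensor product sign conventions to read off the four linear relations that force $\hat{B}$ to be block-diagonal with identical diagonal entries; everything else is essentially formal.
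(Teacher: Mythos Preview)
Your proof is correct and follows essentially the same approach as the paper: both use the explicit model $\hat{S}=\mathbb{C}\oplus\mathbb{C}$ with the Pauli matrices and show surjectivity by exploiting anticommutation with $1\otimes\hat{\gamma}'_1$ and $1\otimes\hat{\gamma}'_2$ to force any $\hat{B}\in Fred_{n+2}$ into the form $\hat{A}\otimes 1$. Your presentation via $2\times 2$ block matrices is slightly more streamlined than the paper's element-wise computation, and your use of the isometry $\|\phi(\hat{A})\|=\|\hat{A}\|$ to get continuity of $\phi^{-1}$ is a cleaner alternative to the paper's appeal to the open mapping theorem.
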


\begin{proof}
As the $\mathbb{Z}_2$-graded irreducible representation $\hat{S}$, we use the previously given example. Namely, $\hat{S} = \mathbb{C}\oplus \mathbb{C}$, and the grading $\hat{\epsilon}$ and $\hat{\gamma}$ are given as follows:
\begin{align*}
\hat{\epsilon}&=\begin{pmatrix} 
1 & 0 \\
0& -1
\end{pmatrix},&
\hat{\gamma_{1}}&=\begin{pmatrix}
0 & 1 \\
1 & 0
\end{pmatrix},&
\hat{\gamma_{2}}&=
\begin{pmatrix} 
  0 & -\sqrt{-1} \\
  \sqrt{-1} & 0
\end{pmatrix}.
\end{align*}
It is clear from the definition that $\phi$ is continuous and injective. We will prove surjectivity. From the definition of the degree of \(\hat{\mathcal{H}}_n \otimes \hat{S}\), the following is obtained:
\begin{align*}
&(\hat{\mathcal{H}}_n\otimes \hat{S})^0=\mathcal{H}_n^0 \otimes S^0 \oplus \mathcal{H}_n^1 \otimes S^1, 
&(\hat{\mathcal{H}}_n\otimes \hat{S})^1=\mathcal{H}_n^0 \otimes S^1 \oplus \mathcal{H}_n^1 \otimes S^0.
\end{align*}
Let us arbitrarily take \(\hat{B} \in Fred_{n+2}(\hat{\mathcal{H}}_n \otimes \hat{S})\) and we define \(e_0 = {}^t\!(1,0)\) and \(e_1 = {}^t\!(0,1) \in \mathbb{C}^2\). 
For any \(a_0 \in \mathcal{H}_n^0\), we describe the images of \(a_0 \otimes e_0\) and \(a_0 \otimes e_1\) under \(\hat{B}\) as follows:
\begin{align*}
&\hat{B}(a_0 \otimes e_0)=x \otimes e_0+y \otimes e_1, &\hat{B}(a_0 \otimes e_1)=z\otimes e_0+w\otimes e_1.
\end{align*}
We consider the following as a bilinear mapping \(B_1\) on \(\mathcal{H}_n^0 \times S^0\):
\begin{align*}
&B_1\colon \mathcal{H}_n^0 \times S^0 \to \hat{\mathcal{H}}\otimes \hat{S}, &(x,y)\mapsto \hat{B}(x\otimes y).
\end{align*}
Since $\hat{B}$ anticommutes with $1 \otimes \hat{\gamma}_1$ and $1\otimes \hat{\gamma}_2$, the following holds:
\begin{align*}
0&=(B_1 \circ (1\times \hat{\gamma}_1)+(1\otimes \hat{\gamma}_1) \circ B_1)(a_0, e_0)
=B_1(a_0, \hat{\gamma}_1 e_1)+1 \otimes \hat{\gamma}_1(x \otimes e_0+y \otimes e_1)\\
&=(z\otimes e_0+w\otimes e_1)-(x\otimes e_1)+y\otimes e_1=(z+y)\otimes e_0+(w-x)\otimes e_1.
\end{align*}
From a similar calculation, we obtain the following:
\begin{align*}
0=(B_1 \circ (1\times \hat{\gamma}_1)+(1\otimes \hat{\gamma}_1) \circ B_1)(a_0, e_0)=
\sqrt{-1}(z-y)\otimes e_0+\sqrt{-1}(w-x)\otimes e_1.
\end{align*}
From the above calculation, it follows that \(z = y = 0\) and \(w = x\), and thus, for some linear mapping \(A_1 \colon \mathcal{H}_n^0 \to \mathcal{H}_n^1\), we can write \(B_1 = A_1 \times 1\). Thus, from the universality of the tensor product, we obtain \(\hat{B}|_{\mathcal{H}_n^0 \otimes S^0} = A_1 \otimes 1\).

By repeating exactly the same argument on \(\mathcal{H}_n^1 \otimes S^1\), \(\mathcal{H}_n^0 \otimes S^1\), and \(\mathcal{H}_n^1 \otimes S^1\), it is shown that \(\hat{B} = \hat{A} \otimes 1\) for some self-adjoint linear mapping \(\hat{A} \colon \hat{\mathcal{H}} \to \hat{\mathcal{H}}\) of degree \(1\).

From the action of the Clifford algebra on $\hat{\mathcal{H}}_n$ and the anticommutativity of \(\hat{B}\), it follows that \(\hat{A}\) anticommutes with the action of \(Cl_n\). Furthermore, from the Fredholm property of \(\hat{B}\), it follows that \(\hat{A}\) is Fredholm. That is, \(\hat{A} \in Fred_n(\hat{\mathcal{H}}_n)\) and $\phi$ is surjective. Since $\phi$ is bijective and continuous, it follows from the open mapping theorem that $\phi$ is a homeomorphism.
\end{proof}

From Lemma \ref{lem:3.3}, it follows that $Fred_{n} \approx Fred_{n+2}$.

When $n=0$, the direct summands of $\hat{\mathcal{H}}_0=\mathcal{H}_{0}^{0}\oplus \mathcal{H}_{0}^{1}$
are both infinite-dimensional separable Hilbert spaces. Therefore, they can be identified. That is, $\mathcal{H}_0^0=\mathcal{H}_0^1=\mathcal{H}$.
In this case, the involution $\hat{\epsilon}$ and $\hat{A} \in Fred_0(\hat{\mathcal{H}})$ can be expressed as follows:
\begin{align*}
&\hat{\epsilon}=
\begin{pmatrix}
1_{\mathcal{H}} & 0 \\
0 & -1_{\mathcal{H}}
\end{pmatrix},
&\hat{A}=
\begin{pmatrix}
0 & A^*\\
A & 0
\end{pmatrix}.
\end{align*}
Here, $A \colon \mathcal{H} \to \mathcal{H}$ is a Fredholm operator. By this correspondence, $Fred_{0}$ can be identified with the space of all Fredholm operators.

When $n=1$, the involution $\hat{\epsilon}$ and $\mathbb{Z}_2$-graded representation $\gamma_1$ of $Cl_1$ can be expressed as follows:
\begin{align*}
&\hat{\epsilon}=
\begin{pmatrix}
1_{\mathcal{H}} & 0 \\
0 & -1_{\mathcal{H}}
\end{pmatrix},
&\gamma_1=
\begin{pmatrix}
0 & -\sqrt{-1} \\
\sqrt{-1} & 0
\end{pmatrix}.
\end{align*}
Then, $\hat{A} \in Fred_{1}(\hat{\mathcal{H}})$ has the following form:
\begin{align*}
\hat{A}=
\begin{pmatrix}
0 & A\\
A & 0
\end{pmatrix}.
\end{align*}
Here, $A\colon \mathcal{H}\to \mathcal{H}$ is a self-adjoint Fredholm operator. We define $Fred_{sa}(\mathcal{H})$ as the set of all self-adjoint Fredholm operators on $\mathcal{H}$, and topologize it with the relative topology induced from $Fred(\mathcal{H})$. By this correspondence, $Fred_1$ can be identified with $Fred_{sa}$.

\begin{prop}
The space $Fred_{sa}$ of self-adjoint Fredholm operators decomposes into the following three path-connected components:
\begin{enumerate}
\item The subspace $Fred_{sa}^{+}$, consisting of self-adjoint Fredholm operators that are positive in the Calkin algebra, forms a path-connected component of $Fred_{sa}$ and is contractible.
\item The subspace $Fred_{sa}^{-}$, consisting of self-adjoint Fredholm operators that are negative in the Calkin algebra, forms a path-connected component of $Fred_{sa}$ and is contractible.
\item The subspace $Fred_{sa}^1 = Fred_{sa} \setminus (Fred_{sa}^{+} \cup Fred_{sa}^{-})$ forms a path-connected component of $Fred_{sa}$ and is not contractible.
\end{enumerate}
\end{prop}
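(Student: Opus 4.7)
The plan is to treat the three components separately: first characterize $Fred_{sa}^\pm$ as the essentially positive and essentially negative operators and contract each to $\pm I$; then use functional calculus plus Kuiper's theorem to connect arbitrary elements of $Fred_{sa}^1$; and finally invoke the suspension identification described in Lemma \ref{lem:3.3} to rule out contractibility of $Fred_{sa}^1$.

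First I would unpack the definitions. An element $A \in Fred_{sa}^+$ is characterized by the image of $A$ in the Calkin algebra being positive and invertible, equivalently by the essential spectrum of $A$ being contained in some $[c,\infty)$ with $c>0$. Upper semicontinuity of the essential spectrum under operator-norm perturbation shows that $Fred_{sa}^+$ is open in $Fred_{sa}$, and likewise $Fred_{sa}^-$. Their union is therefore open, so $Fred_{sa}^1$ is also open and the three sets partition $Fred_{sa}$ into open pieces. To contract $Fred_{sa}^+$, I use the affine homotopy $H_t(A) = (1-t)A + tI$ for $t \in [0,1]$. Each $H_t(A)$ is self-adjoint, and its essential spectrum is a convex combination of that of $A$ with $\{1\}$, hence stays in $(0,\infty)$. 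This gives a strong deformation retraction of $Fred_{sa}^+$ onto $\{I\}$, and the same argument with $-I$ handles $Fred_{sa}^-$.

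For path-connectedness of $Fred_{sa}^1$, I would first reduce to symmetries. Given $A \in Fred_{sa}^1$, rescale via $A \mapsto A(1+A^2)^{-1/2}$ and then apply continuous functional calculus with a family of odd continuous cutoffs $\chi_t$ that interpolate from the identity on $[-1,1]$ to the sign function, each vanishing only on a shrinking neighborhood of $0$ disjoint from the essential spectrum of the rescaled $A$. This yields a path inside $Fred_{sa}^1$ from $A$ to an operator $S$ with $S^2 - I$ compact and both spectral projections $(I \pm S)/2$ of infinite rank. Given any two such symmetries $S_0,S_1$, the $\pm 1$-eigenspaces are all separable and infinite-dimensional, so there is a unitary $U$ with $US_0U^*=S_1$. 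Kuiper's theorem supplies a norm-continuous path of unitaries from $I$ to $U$, and conjugating $S_0$ along this path gives a continuous path of symmetries in $Fred_{sa}^1$ from $S_0$ to $S_1$.

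For non-contractibility of $Fred_{sa}^1$ I would appeal to the odd-degree analogue of Lemma \ref{lem:3.3}, which identifies $Fred_{sa}^1 \simeq Fred_1$ with a space weakly equivalent to $\Omega Fred_0$; since $\pi_0(Fred_0) \cong \mathbb{Z}$ via the Fredholm index, this yields $\pi_1(Fred_{sa}^1) \cong \mathbb{Z}$, which is incompatible with contractibility. An alternative is to display an explicit loop of multiplication operators on $L^2(S^1)$ with nonzero spectral flow and argue directly. The main obstacle is the path-connectedness step for $Fred_{sa}^1$: a straight-line homotopy between two essentially indefinite operators generically acquires $0$ in the essential spectrum, so the reduction to symmetries via the rescaling and functional calculus is essential, and one must verify that each intermediate operator in the functional-calculus deformation remains Fredholm and essentially indefinite, which is exactly why the cutoffs $\chi_t$ must be chosen to vanish only on intervals disjoint from the (essential) spectrum.
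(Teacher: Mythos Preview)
The paper does not give its own proof; it simply cites Atiyah--Singer \cite{A-Si}. Your sketch is essentially the Atiyah--Singer argument (affine contraction of $Fred_{sa}^\pm$, functional-calculus reduction to symmetries plus Kuiper for path-connectedness of $Fred_{sa}^1$, and the loop-space identification for non-contractibility), so in spirit you are reproducing the cited proof rather than offering an alternative.

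Two small corrections are in order. First, the sentence ``Their union is therefore open, so $Fred_{sa}^1$ is also open'' is a non sequitur: the complement of an open set is closed, not open. You need to argue openness of $Fred_{sa}^1$ directly, e.g.\ by noting that a self-adjoint invertible element of the Calkin algebra with both positive and negative spectrum retains this property under small perturbation. Second, Lemma~\ref{lem:3.3} is the period-two homeomorphism $Fred_n \approx Fred_{n+2}$, not the statement $Fred_{sa}^1 \simeq \Omega Fred_0$; the latter is the Atiyah--Singer suspension theorem recorded separately in \S\ref{subsec:Description of the generator}. Since in this paper both that theorem and the present proposition are deferred to \cite{A-Si}, invoking the suspension result here is not circular, but you should cite the correct statement. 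Your alternative route via an explicit loop with nonzero spectral flow avoids this issue entirely and is arguably cleaner for a self-contained argument.
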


\begin{proof}
See \cite{A-Si}.
\end{proof}

\begin{dfn}\label{dfn:Space of Fredholm operators}
\begin{enumerate}
\item Let $\mathcal{F}_1(\mathcal{\hat{H}}_1) \subset Fred_{1}(\mathcal{\hat{H}}_1)$ be the subspace corresponding to $Fred_{sa}^1$.
\item Under the homeomorphism $Fred_1 \approx Fred_{2k+1}$ given by Lemma \ref{lem:3.3}, let $\mathcal{F}_{2k+1}$ be the subspace corresponding to 
$\mathcal{F}_1 \subset Fred_{2k+1}$. Also, let $\mathcal{F}_{2k}=Fred_{2k}$.
\item For $n \geq 0$, let $\mathcal{F}_{n}^{*} \subset \mathcal{F}_n$ be the subspace of invertible operators.
\end{enumerate}
\end{dfn}

\begin{dfn}
\begin{enumerate}
\item A pair $(X, Y)$ is defined as follows: $X$ is a topological space, and $Y$ is a closed subset of $X$. In particular, when $X$ is a compact Hausdorff space, the pair $(X, Y)$ is called a compact pair. Additionally, when $X$ is a CW-complex and $Y$ is a subcomplex, the pair $(X, Y)$ is referred to as a CW-pair.
\item Consider two pairs $(X,Y)$ and $(Z,W)$. A continuous map $f\colon X\to Z$ is called a map from the pair $(X,Y)$ to the pair $(Z,W)$ if the image of $Y$ under $f$ is contained in $W$. Such a map is denoted by $f\colon (X,Y)\to (Z,W)$.
\item Two maps $f, g\colon (X,Y) \to (Z,W)$ between the pairs $(X,Y)$ and $(Z,W)$ are homotopic if there exists a map $H\colon (X\times[0,1], Y\times[0,1]) \to (Z,W)$ such that $H(x,0) = f(x)$ and $H(x,1) = g(x)$ for all $x \in X$. The quotient of the set of all maps from $(X,Y)$ to $(Z,W)$ under the homotopy relation is denoted by $[(X,Y), (Z,W)]$.
\end{enumerate}
\end{dfn}

\begin{dfn}
Let $n\geq 0$ be a non-negative integer, and $(X,Y)$ a compact pair (or CW-pair). Then, $K^{-n}(X,Y)$ is defined as follows:
\begin{align*}
K^{-n}(X, Y)=[(X,Y), (\mathcal{F}_n, \mathcal{F}_n^{*})].
\end{align*}
In particular, when $Y = pt$, that is, for a pointed space X, the reduced $K$-group is defined as follows:
\begin{align*}
\tilde{K}^{-n}(X)=K^{-n}(X, pt).
\end{align*}
\end{dfn}

It is known that the following holds:
\begin{align*}
\tilde{K}^{-n}(X/Y)=K^{-n}(X, Y).
\end{align*}

The Abelian group structure on $K^{-n}(X, Y)$ is induced by the direct sum of the family of Fredholm operators by using an isomorphism 
$\hat{\mathcal{H}}_n\oplus \hat{\mathcal{H}}_n \cong \hat{\mathcal{H}}_n$.

The operation of changing the $\mathbb{Z}_2$-grading on the Hilbert space $\hat{\mathcal{H}}_n$ is defined as replacing the involution $\varepsilon$, which determines the original $\mathbb{Z}_2$-grading, with its negative $-\varepsilon$. The resulting graded Hilbert space is denoted by $\Pi \hat{\mathcal{H}}_n$.

Based on this definition, the inverse of a family of Fredholm operators $\hat{A} \colon X \to \mathcal{F}_n$ in the $K$-group is given by
\[
\Pi \hat{A} \colon X \to \mathcal{F}_n(\Pi \hat{\mathcal{H}}_n), \quad \Pi \hat{A}_x = \hat{A}_x.
\]
That is, while the operator $\hat{A}$ itself remains unchanged, its inverse is defined through the change in the $\mathbb{Z}_2$-grading of the underlying Hilbert space.

By Lemma \ref{lem:3.3}, there is a natural isomorphism $K^{-n}(X,Y)\cong K^{-n-2}(X, Y)$ of Abelian groups. By using this periodicity, we define \(K^n(X, Y)\) and \(\tilde{K}^n(X)\) for natural numbers \(n \in \mathbb{N}\) as well.

The $K$-theory product $K^{-n}(X, Y)\times K^{-m}(X, Z)\to$ $K^{-n-m}(X, Y \cup Z)$ is induced by 
\begin{align*}
\hat{A} \otimes 1+1 \otimes \hat{A}^{\prime} \colon X \to \mathcal{F}_{n+m}(\mathcal{\hat{H}}_n \otimes \mathcal{\hat{H}}^{\prime}_m)
\end{align*}
with the representatives $A \colon X \to \mathcal{F}_n(\mathcal{\hat{H}}_n)$ and $\hat{A}^{\prime} \colon X \to \mathcal{F}_m(\mathcal{\hat{H}}^{\prime}_m).$

It is known that $K$-theory constitutes a generalized cohomology theory. Below, we recall the axioms of generalized cohomology.

\begin{prop}
The following statements hold:

\begin{itemize}
    \item \textbf{Homotopy Axiom}: Let \((X, Y)\) and \((Z, W)\) be compact pairs. If two maps \(A, B: (X, Y) \to (Z, W)\) are homotopic, denoted \(A \simeq B\), then the induced maps \(A^*, B^*: K^*(Z, W) \to K^*(X, Y)\) on the cohomology groups are equal.
    \item \textbf{Exactness Axiom}: For any compact pair $(X, A)$, the following sequence
\[
\begin{tikzpicture}[thick]
\node (a) at (0, 0) {$\cdots$};
\node (b) at (2, 0) {$K^{n-1}(A)$};
\node (c) at (4, 0) {$K^n(X, A)$};
\node (d) at (6, 0) {$K^n(X)$};
\node (e) at (8, 0) {$K^n(A)$};
\node (f) at (10, 0) {$\cdots$};
\node (g) at (12, 0) {$(\text{exact})$};

\draw[->] (a) -- (b);
\draw[->] (b) -- node[midway, above] {\scriptsize$\delta^*$} (c);
\draw[->] (c) -- node[midway, above] {\scriptsize$j^*$} (d);
\draw[->] (d) -- node[midway, above] {\scriptsize$i^*$} (e);
\draw[->] (e) -- node[midway, above] {\scriptsize$\delta^*$} (f);
\end{tikzpicture}
\]
is exact. Here, $i \colon A \to X$ and $j \colon X \to (X, A)$ are inclusion maps.
    \item \textbf{Excision Axiom}: Let $(X, A)$ and $(X, B)$ be compact pairs. Then, for any $n \in \mathbb{Z}$, there is an isomorphism
\begin{align*}
i^* \colon K^n(A \cup B, B) \cong K^n(A, A \cap B),
\end{align*}
where $i \colon (A, A \cap B) \to (A \cup B, B)$ is the inclusion map.
\end{itemize}
\end{prop}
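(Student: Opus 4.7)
The plan is to verify each of the three axioms separately, with all three reducing to standard homotopy-theoretic arguments built on top of the representability of $K^{-n}$ by the pair $(\mathcal{F}_n, \mathcal{F}_n^*)$; the detailed proofs belong to \cite{A-Si}. The Homotopy Axiom is immediate from the definition. Since $K^{-n}(X,Y) = [(X,Y),(\mathcal{F}_n, \mathcal{F}_n^*)]$ is itself a set of homotopy classes, any homotopy $H \colon (X \times [0,1], Y \times [0,1]) \to (Z,W)$ between $A$ and $B$ composes with a representative $f \colon (Z,W) \to (\mathcal{F}_n, \mathcal{F}_n^*)$ to give a homotopy $f \circ H$ between $f \circ A$ and $f \circ B$. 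Hence $A^* = B^*$ as maps on homotopy classes, and nothing further needs checking.

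For the Exactness Axiom, my approach is to pass to reduced $K$-theory via the identification $K^{-n}(X,A) \cong \tilde{K}^{-n}(X/A)$ and then exploit the Puppe cofiber sequence
\[
A \hookrightarrow X \to X/A \to \Sigma A \to \Sigma X \to \Sigma(X/A) \to \cdots.
\]
Applying the functor $[-,(\mathcal{F}_n,\mathcal{F}_n^*)]$ converts this into a long exact sequence of abelian groups; the essential input here is that the subspace $\mathcal{F}_n^*$ of invertible operators is contractible (established in \cite{A-Si}), so the pair $(\mathcal{F}_n, \mathcal{F}_n^*)$ really does represent the reduced theory as a based mapping space. One then invokes the suspension isomorphism $\tilde{K}^{-n}(\Sigma X) \cong \tilde{K}^{-n+1}(X)$, which in the Fredholm operator formulation is a consequence of the periodicity $Fred_n \approx Fred_{n+2}$ (Lemma \ref{lem:3.3}) together with the explicit suspension construction to be developed in the next subsection. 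The main technical point I anticipate is to identify the connecting homomorphism $\delta^*$ coming out of the Puppe construction with the one intended in the statement, but this is a routine naturality check once the suspension isomorphism is available.

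For the Excision Axiom, the plan is again to pass to reduced $K$-theory. Since $B$ is closed in the compact Hausdorff space $A \cup B$ and $A \cap B$ is closed in $A$, the inclusion $i \colon A \hookrightarrow A \cup B$ descends to a natural homeomorphism
\[
A/(A \cap B) \xrightarrow{\cong} (A \cup B)/B.
\]
Combining this with $K^n(A \cup B, B) \cong \tilde{K}^n((A \cup B)/B)$, $K^n(A, A \cap B) \cong \tilde{K}^n(A/(A \cap B))$, and the Homotopy Axiom already proved yields the desired isomorphism $i^*$. I do not expect any substantial obstacle here beyond verifying that the quotient map really is a homeomorphism in the compact Hausdorff setting, which is standard point-set topology. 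Thus the single nontrivial ingredient across all three axioms is the contractibility of $\mathcal{F}_n^*$ together with the suspension/periodicity machinery supplied by \cite{A-Si}, and the proof of the proposition reduces to pointing to those inputs.
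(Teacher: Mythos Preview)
Your proposal is correct and follows essentially the same route as the paper: the Homotopy Axiom is immediate from the definition, Exactness is obtained from the Puppe cofiber sequence together with the suspension isomorphism, and Excision comes from the identification $K^{-n}(X,Y)\cong\tilde K^{-n}(X/Y)$ combined with the quotient homeomorphism $A/(A\cap B)\cong(A\cup B)/B$. The only cosmetic difference is that the paper names Kuiper's theorem directly as the source of Excision (it is what makes $\mathcal{F}_n^*$ contractible and hence underlies the identification you invoke), and for general compact pairs the paper points to the vector-bundle formulation in \cite{A1} rather than spelling out the cofiber argument.
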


\begin{proof}
The Homotopy Axiom follows directly from its definition. The Excision Axiom is a consequence of Kuiper's Theorem \cite{Kuiper}. The exactness axiom in the category of finite CW-complexes is derived from the suspension isomorphism and cofiber sequence. (The details on the suspension isomorphism will be discussed in the next section.) For details, see, for example, \cite{Araki}, \cite{Rudyak}. For compact pairs, this can be understood by using the formulation in terms of vector bundles in topological $K$-theory \cite{A1}.
\end{proof}


\subsection{Description of the generator of $K^n(D^n,\partial D^n)$}
\label{subsec:Description of the generator}

In this section, we let $I=[-\pi/2, \pi/2]$ and $\partial I = \{\pm \pi/2\}$.

When \(\hat{A}: (X, Y) \to (\mathcal{F}_{n+1}(\mathcal{\hat{H}}_{n+1}), \mathcal{F}_{n+1}^{*}(\mathcal{\hat{H}}_{n+1}))\) 
is given, we define 
\begin{align*}
\mathrm{AS}(\hat{A}): (X \times I, Y \times I \cup X \times \partial I) 
\to (\mathcal{F}_{n}(\mathcal{\hat{H}}_{n+1}), \mathcal{F}^{*}_{n}(\mathcal{\hat{H}}_{n+1}))
\end{align*}
as follows:
\begin{align*}
\mathrm{AS}(\hat{A})(x,t)=\hat{A}_{x}\cos{t}-\hat{\gamma}_{n+1}\sin{t}.
\end{align*}

In their work \cite{A-Si}, Atiyah and Singer establish an index theory for skew-adjoint Fredholm operators, derive the Bott periodicity, and relate the operator index to $K$-theory invariants. Building on this framework, we have:

\begin{thm}
\(\mathrm{AS}(\hat{A}): (X \times I, Y \times I \cup X \times \partial I) \to (\mathcal{F}_{n}(\mathcal{\hat{H}}_{n+1}), \mathcal{F}^{*}_{n}(\mathcal{\hat{H}}_{n+1}))\) induces an isomorphism 
\begin{align*}
K^{-(n+1)}(X, Y) \cong K^{-n}(X \times I, Y \times I \cup X \times \partial I). 
\end{align*}
Furthermore, the following isomorphism is also induced from AS:
\begin{align*}
\tilde{K}^{-n-1}(X) \cong \tilde{K}^{-n}(SX)
\end{align*}
where \( S \) is the suspension of a topological space, defined as \( SX = (X \times I)/(pt \times I \cup X \times \partial I) \). These isomorphisms are called suspension isomorphisms.
\end{thm}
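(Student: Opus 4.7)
The plan is to verify that $\mathrm{AS}$ yields a well-defined group homomorphism and then construct an explicit inverse, following the Atiyah--Singer strategy of \cite{A-Si}.

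First, I would confirm that $\mathrm{AS}(\hat{A})$ lands in the stated target pair. Since $\hat{A}_x$ is self-adjoint of degree $1$, anticommutes with each of $\hat{\gamma}_{1},\ldots,\hat{\gamma}_{n+1}$, and $\hat{\gamma}_{n+1}^{2}=1$, one computes
\begin{align*}
\bigl(\hat{A}_x\cos t - \hat{\gamma}_{n+1}\sin t\bigr)^{2} = \hat{A}_x^{2}\cos^{2}t + \sin^{2}t,
\end{align*}
which is invertible whenever $\sin t\neq 0$ (covering $X\times\partial I$) or $\hat{A}_x$ is invertible (covering $Y\times I$). The operator $\mathrm{AS}(\hat{A})(x,t)$ is manifestly self-adjoint of degree $1$ and anticommutes with $\hat{\gamma}_{1},\ldots,\hat{\gamma}_{n}$, so it lies in $\mathcal{F}_{n}(\hat{\mathcal{H}}_{n+1})$. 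Because $\mathrm{AS}$ sends homotopies to homotopies and commutes with direct sums (under the natural identifications of Hilbert spaces), it descends to a group homomorphism
\begin{align*}
\mathrm{AS}_{*}\colon K^{-(n+1)}(X,Y)\longrightarrow K^{-n}(X\times I,\;Y\times I\cup X\times\partial I).
\end{align*}

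Next, to show $\mathrm{AS}_{*}$ is an isomorphism, I would construct an inverse by deforming any representative into the canonical sinusoidal profile. Given $\hat{B}\colon(X\times I,\,Y\times I\cup X\times\partial I)\to(\mathcal{F}_{n},\mathcal{F}_{n}^{*})$, Kuiper's theorem applied to the Clifford-equivariant unitary group of $\hat{\mathcal{H}}_{n+1}$ implies that each path component of the invertible operators anticommuting with $\hat{\gamma}_{1},\ldots,\hat{\gamma}_{n}$ which reduce to $\pm\hat{\gamma}_{n+1}$ in the Calkin algebra is contractible. This lets us homotope $\hat{B}$ so that $\hat{B}(x,\pm\pi/2)=\mp\hat{\gamma}_{n+1}$ for all $x\in X$. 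Functional calculus along each slice $\{x\}\times I$ then straightens the family into the form $\hat{A}_x\cos t - \hat{\gamma}_{n+1}\sin t$, and the operator $\hat{A}\colon X\to\mathcal{F}_{n+1}$ thus extracted is invertible on $Y$. A straight-line interpolation between $\hat{B}(x,t)$ and $\mathrm{AS}(\hat{A})(x,t)$ (staying inside $\mathcal{F}_{n}$ by the same squaring identity) supplies the homotopies showing the two assignments are mutually inverse on homotopy classes.

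The main obstacle lies in this straightening step: one must deform $\hat{B}$ into the sinusoidal form while simultaneously preserving self-adjointness, anticommutation with $\hat{\gamma}_{1},\ldots,\hat{\gamma}_{n}$, the Fredholm property throughout $X\times I$, and invertibility on the distinguished subset. This is handled by combining the Kuiper-type contractibility of the Clifford-equivariant general linear group with piecewise spectral deformations parameterized over $X$; the infinite multiplicity of all irreducible $Cl_{n+1}$-representations in $\hat{\mathcal{H}}_{n+1}$ (property (1) of the definition) guarantees the equivariant homotopies needed. Finally, the reduced statement $\tilde{K}^{-(n+1)}(X)\cong\tilde{K}^{-n}(SX)$ follows by specializing to $Y=\{\mathrm{pt}\}$ with basepoint value $\hat{A}_{\mathrm{pt}}=\hat{\gamma}_{n+1}$ (so $\mathrm{AS}(\hat{A})$ is invertible on $\mathrm{pt}\times I\cup X\times\partial I$) and observing that $(X\times I)/(\mathrm{pt}\times I\cup X\times\partial I)=SX$, after which the absolute isomorphism descends to the reduced one.
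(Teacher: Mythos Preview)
The paper does not give a proof of this theorem; its entire proof reads ``See \cite{A-Si}.'' Your proposal therefore goes well beyond what the paper does by actually sketching the Atiyah--Singer argument. The initial portion---the squaring identity $(\hat{A}_x\cos t-\hat{\gamma}_{n+1}\sin t)^2=\hat{A}_x^2\cos^2 t+\sin^2 t$ and the verification that $\mathrm{AS}$ lands in the correct pair and respects homotopies and direct sums---is clean and correct.

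The part that deserves scrutiny is your inverse construction. The broad outline (normalize the endpoints to $\mp\hat{\gamma}_{n+1}$ using Kuiper-type contractibility, then straighten each slice into sinusoidal form) is in the spirit of \cite{A-Si}, but the sentence ``functional calculus along each slice then straightens the family into the form $\hat{A}_x\cos t-\hat{\gamma}_{n+1}\sin t$'' hides the real content. Functional calculus on $\hat{B}(x,t)$ alone does not obviously produce an operator $\hat{A}_x$ that is independent of $t$ and anticommutes with $\hat{\gamma}_{n+1}$; one needs the more delicate argument in \cite{A-Si} that the map $\mathcal{F}_{n+1}\to\Omega\mathcal{F}_n$ is a homotopy equivalence, which proceeds by analyzing the essential spectrum relative to $\hat{\gamma}_{n+1}$ and invoking the contractibility of certain Clifford-equivariant operator spaces. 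You correctly flag this as the main obstacle, but as written the step is a placeholder rather than an argument. Since the paper itself defers entirely to \cite{A-Si}, your sketch is already more informative than the paper's proof; just be aware that the straightening step, if fully written out, is where essentially all the work lies.
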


\begin{proof}
See \cite{A-Si}.
\end{proof}

Let $n$ be $1$ or $-1$. We define the operator $Sh_n$ on $l^2$ as follows.
\begin{align*}
Sh_n((a_1,a_2,...))=
\begin{cases}
	(0,a_1,a_2,...)& n=1,\\ 
	(a_2,a_3,...)& n=-1.
\end{cases}
\end{align*}
Clearly, the adjoint of $Sh_{-1}$ is $Sh_1$. Namely, $Sh_{-1}^* = Sh_1$.
We define $\hat{l^2}$ as the direct sum of $l^2$, $\hat{l^2} = l^2 \oplus l^2$.

\begin{prop}\label{Prop:generator on n=0}
The $K$-group of a point $pt$ is isomorphic to $\mathbb{Z}$, i.e., $K(pt)=[pt,\mathcal{F}_0(\hat{l}^2)] \cong \mathbb{Z}$, and its generator is given as follows:
\begin{align*}
\hat{Sh}=
\begin{pmatrix}
0 & Sh_{1} \\
Sh_{-1} & 0
\end{pmatrix}.
\end{align*}
\end{prop}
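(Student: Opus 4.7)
The plan is to reduce the proposition to the classical identification of the path-components of $Fred(l^2)$ with $\mathbb{Z}$ via the Fredholm index, and then to verify by a hands-on computation that the operator underlying $\hat{Sh}$ has index $1$.

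First I would invoke the identification of $\mathcal{F}_0(\hat{l}^2)=Fred_0(\hat{l}^2)$ with the usual space $Fred(l^2)$ of Fredholm operators on $l^2$, as described immediately after Lemma \ref{lem:3.3}. Every element $\hat{A}\in\mathcal{F}_0(\hat{l}^2)$ is of block form
\[
\hat{A}=\begin{pmatrix} 0 & A^{*} \\ A & 0 \end{pmatrix}
\]
with $A\colon l^{2}\to l^{2}$ a Fredholm operator, and this correspondence is a homeomorphism $\mathcal{F}_0(\hat{l}^2)\approx Fred(l^2)$. Since $Sh_{-1}^{*}=Sh_{1}$, the operator $\hat{Sh}$ corresponds precisely to $A=Sh_{-1}$ under this identification. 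Thus the problem reduces to showing that $\pi_0(Fred(l^2))\cong\mathbb{Z}$ and that $Sh_{-1}$ represents a generator.

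The first assertion is exactly the classical theorem of Atiyah, stating that the Fredholm index
\[
\mathrm{ind}\colon \pi_0(Fred(l^2))\xrightarrow{\;\cong\;}\mathbb{Z},\qquad A\mapsto \dim\ker A-\dim\mathrm{coker}\,A,
\]
is a bijection; this is part of the foundational input from \cite{A-Si} and is cited rather than reproved. The second assertion follows from a direct computation: the kernel of $Sh_{-1}$ is the one-dimensional subspace spanned by $e_{1}=(1,0,0,\ldots)$, and $Sh_{-1}$ is surjective because $(0,b_1,b_2,\ldots)\mapsto (b_1,b_2,\ldots)$. Hence $\mathrm{ind}(Sh_{-1})=1-0=1$, so $[\hat{Sh}]$ generates $K(pt)\cong\mathbb{Z}$.

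There is no genuine obstacle in this proof: the difficult content is absorbed into the cited index theorem, and what remains is only the routine identification of $\hat{Sh}$ with $Sh_{-1}$ together with the elementary index calculation above. The only care required is to check that the signs and adjoints in the block decomposition match the definitions of $Sh_{\pm 1}$ given just above the proposition.
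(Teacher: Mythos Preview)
Your proof is correct and follows essentially the same approach as the paper: both reduce to the classical identification $\mathrm{Ind}\colon [pt,Fred(l^2)]\cong\mathbb{Z}$ and then compute $\mathrm{Ind}(Sh_{-1})=1-0=1$ directly, invoking the correspondence between $\mathcal{F}_0(\hat{l}^2)$ and $Fred(l^2)$ to conclude. Your write-up is somewhat more explicit about the block decomposition and the kernel/cokernel calculation, but the underlying argument is identical.
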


\begin{proof}
The set $[pt, Fred(l^2)]$ is bijective to the set $\mathbb{Z}$ of integers via the Fredholm index. Namely, $\text{Ind}: [pt, Fred(l^2)] \cong \mathbb{Z}$.
Since $\text{Ind}(Sh_{-1}) = \dim{(\ker{(Sh_{-1}}))} - \dim{(\text{coker}(Sh_{-1}))} = 1 - 0 = 1$, it follows that $Sh_{-1}$ provides a generator. In particular, from the correspondence between $Fred(l^2)$ and $\mathcal{F}_0(l^2)$, the operator $\hat{Sh}$ gives a generator for $K(pt)$.
\end{proof}

We define $l^2_{\ge 2}$ as the set of all elements in $l^2$ whose first component is $0$. The space $l^2_{\ge 2}$ is a closed subspace, and the decomposition $l^2 = \ker{Sh_{-1}} \oplus l^2_{\ge 2}$ holds. Note also that $Sh_{-1}$ induces an isomorphism $l^2_{\ge 2} \cong l^2$. The inverse mapping is given by $Sh_1: l^2 \to l^2_{\ge 2}$. In particular, $\hat{Sh}: l^2_{\ge 2} \oplus l^2 \cong l^2_{\ge 2} \oplus l^2$ is an invertible operator of degree $1$.

Let \( A \), \( B \), and \( C \) be vector spaces. Suppose \( f: A \to C \) and \( g: B \to C \) are linear maps. Then the linear map \( f + g: A \oplus B \to C \) is defined by  
\[
(f + g)(a, b) = f(a) + g(b).
\]

\begin{prop}\label{Prop:generator on pt}
Let $n\in \mathbb{N}$ be a natural number, and let $\hat{S}^{(2n)}$ denote the \emph{standard} $\mathbb{Z}_2$-graded irreducible representation of $Cl_{2n}$ fixed in Convention~\ref{conv:std-cliff}. Then, with the zero map on $\hat{S}^{(2n)}$ denoted by $\hat{0}$, the operator $\hat{0} +  \hat{\gamma}$ on $\hat{S}^{(2n)} \oplus \mathcal{\hat{H}}_{2n}$ provides a generator for $K^{-2n}(pt) = [pt, \mathcal{F}_{2n}(\hat{S}^{(2n)} \oplus \hat{\mathcal{H}}_{2n})]$. Here, $\hat{\gamma}: \mathcal{\hat{H}}_{2n} \to \hat{S}^{(2n)} \oplus \mathcal{\hat{H}}_{2n}$ is an invertible operator that anticommutes with the action of $Cl_{2n}$.
\end{prop}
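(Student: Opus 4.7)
The plan is to prove the proposition by induction on $n$, with the base case supplied by Proposition \ref{Prop:generator on n=0} and the inductive step driven by the periodicity homeomorphism of Lemma \ref{lem:3.3}.

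For the base case, I would recast the shift generator $\hat{Sh}$ of $K^0(\pt)$ in the form $\hat{0} + \hat{\gamma}$. Splitting $l^2 = \mathbb{C} e_1 \oplus l^2_{\geq 2}$ in the degree-$0$ slot of $\hat{l}^2$ exhibits $\hat{l}^2 \cong \hat{S}_0 \oplus \hat{\mathcal{H}}_0'$, where $\hat{S}_0 = \mathbb{C} e_1$ is a graded irreducible $Cl_0$-representation and $\hat{\mathcal{H}}_0'$ is isomorphic to $\hat{\mathcal{H}}_0$. Under this splitting, $\hat{Sh}$ annihilates the $\hat{S}_0$ factor (since $Sh_{-1}$ kills $e_1$) and restricts to an invertible operator $\hat{\gamma}_0$ on $\hat{\mathcal{H}}_0'$ built from $Sh_{-1}|_{l^2_{\geq 2}}$ paired with its adjoint $Sh_1$; interpreting $\hat{\gamma}_0$ as taking values in $\hat{S}_0 \oplus \hat{\mathcal{H}}_0'$ (with zero $\hat{S}_0$-component) then expresses $\hat{Sh}$ itself as $\hat{0} + \hat{\gamma}_0$.

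For the inductive step, I would apply the homeomorphism $\hat{A} \mapsto \hat{A} \otimes 1_{\hat{S}_2}$ of Lemma \ref{lem:3.3} to the inductive generator $\hat{A}_n = \hat{0}_{\hat{S}_{2n}} + \hat{\gamma}_n$ of $K^{-2n}(\pt)$. Using Proposition \ref{prop:representation}(2) to identify $\hat{S}_{2n} \otimes \hat{S}_2 \cong \hat{S}_{2n+2}$, together with the identification $\hat{\mathcal{H}}_{2n} \otimes \hat{S}_2 \cong \hat{\mathcal{H}}_{2n+2}$ (both satisfy the defining properties of $\hat{\mathcal{H}}_{2n+2}$), one obtains
\begin{align*}
\hat{A}_n \otimes 1 = \hat{0}_{\hat{S}_{2n+2}} + (\hat{\gamma}_n \otimes 1)
\end{align*}
on $\hat{S}_{2n+2} \oplus \hat{\mathcal{H}}_{2n+2}$, which is again of the desired form. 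The graded tensor-product conventions recalled before Proposition \ref{prop:representation} guarantee that $\hat{\gamma}_n \otimes 1$ anticommutes with both the $\hat{\gamma}_i \otimes 1$ and the $1 \otimes \hat{\gamma}_j$ generators of $Cl_{2n+2}$, so the inductive hypothesis propagates.

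Finally, since the statement asserts the conclusion for any admissible invertible $\hat{\gamma}$, I would verify that $[\hat{0} + \hat{\gamma}]$ is independent of the particular choice of $\hat{\gamma}$. My plan is to argue that the space of invertible $Cl_{2n}$-anticommuting operators $\hat{\mathcal{H}}_{2n} \to \hat{S} \oplus \hat{\mathcal{H}}_{2n}$ is path-connected, so that any two such choices are homotopic inside $\mathcal{F}_{2n}^{*}$; this is a Kuiper-type contractibility statement in the $Cl_{2n}$-equivariant graded setting, akin to the one invoked in the Remark after the definition of $\hat{\mathcal{H}}_n$, and reducible to Kuiper's theorem by polar decomposition after trading the anticommutation with $Cl_{2n}$ for commutation with $Cl_{2n+1}$ in the natural way. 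I expect this connectedness argument to be the main technical subtlety, while the rest of the proof is formal bookkeeping with the graded tensor product.
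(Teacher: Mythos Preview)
Your proposal is correct and follows essentially the same approach as the paper: induction on $n$, with the base case obtained by splitting off $\ker Sh_{-1}$ from $\hat{l}^2$ to write $\hat{Sh}$ (or rather $\hat{Sh}\otimes 1$) in the form $\hat{0}+\hat{\gamma}$, and the inductive step given by Lemma~\ref{lem:3.3} together with Proposition~\ref{prop:representation}(2). The only substantive difference is that you explicitly address the independence of $[\hat{0}+\hat{\gamma}]$ from the particular choice of $\hat{\gamma}$ via a Kuiper-type connectedness argument, whereas the paper simply exhibits one $\hat{\gamma}$ and leaves this point implicit; your extra care here is warranted, since the proposition is subsequently applied with a different $\hat{\gamma}$ than the one constructed.
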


\begin{proof}
We provide a proof by an induction on $n\in \mathbb{N}$.

\begin{enumerate}
\item Consider the case when $n = 1$.

Proposition \ref{Prop:generator on n=0} and Lemma \ref{lem:3.3} show that $\hat{Sh} \otimes 1 \in [pt, \mathcal{F}_{2n}(l^2 \otimes \hat{S}^{(2n)})]$ provides a generator. Under the identification given by the isomorphisms $\mathbb{C} \otimes \hat{S}^{(2n)} \cong \hat{S}^{(2n)}$ and $Sh_{-1}\otimes 1 \colon l^2_{\ge 2}\otimes \hat{S}^{(2n)} \cong l^2 \otimes \hat{S}^{(2n)}$, which preserve the action of the Clifford algebra $Cl_2$, the operator $\hat{Sh} \otimes 1$ corresponds to $\hat{0} + \hat{\phi} \colon \hat{S}^{(2n)} \oplus (\hat{l^2}\otimes \hat{S}^{(2n)})\to \hat{S}^{(2n)} \oplus (\hat{l^2}\otimes \hat{S}^{(2n)})$. Here, $\hat{\phi}: \hat{l^2} \otimes \hat{S}^{(2n)} \cong \hat{S}^{(2n)} \oplus (\hat{l^2} \otimes \hat{S}^{(2n)})$ is an invertible operator of degree $1$ induced by $\hat{Sh} \times 1: (l^2_{\ge 2}\oplus l^2) \times \hat{S}^{(2n)} \to (l^2_{\ge 2}\oplus l^2) \times \hat{S}^{(2n)}$. By construction, $\hat{\phi}$ anticommutes with the action of $Cl_2$. Therefore, the claim follows in the case $n = 1$.

\item Let $n\in \mathbb{N}$ be a natural number greater than or equal to $2$, and assume that the claim holds up to $n-1$.

The claim follows immediately from the induction hypothesis, Proposition \ref{prop:representation}, and Lemma \ref{lem:3.3}.
\end{enumerate}
\end{proof}

Let $\hat{S}$ be a $\mathbb{Z}_2$-graded irreducible representation of $Cl_{n}$. Then, we define $\hat{\mu}_{\hat{S}}: D^n \to \text{End}(\hat{S})$ as follows.

\begin{align*}
\hat{\mu}_{\hat{S}}(x_1,...,x_n)=\sum_{i=1}^{n} x_i \hat{\gamma}_i.
\end{align*}

\begin{thm}
Let $n \in \mathbb{Z}$ be a non-negative integer. There are the following isomorphisms:
\begin{align*}
&K^{-2n}(D^{2n},\partial D^{2n})\cong \mathbb{Z}, &K^{-2n-1}(D^{2n+1}, \partial D^{2n+1})\cong \mathbb{Z}.
\end{align*}
Let $\hat{S}$ be a $\mathbb{Z}_2$-graded irreducible representation of $Cl_{2n}$. Then, 
\begin{align*}
-\hat{\mu}_{\hat{S}}  + \hat{\gamma}_{2n+1}=-(\sum_{i=1}^{2n} x_i \hat{\gamma}_i) + \hat{\gamma}_{2n+1}
\end{align*}
provides a generator for $K^{-2n}(D^{2n}, \partial D^{2n}) \cong \mathbb{Z}$. Here, $\hat{\gamma}_{2n+1}\colon \hat{\mathcal{H}}_{2n} \to \hat{S} \oplus \hat{\mathcal{H}}_{2n}$ is an invertible operator that anticommutes with the action of $Cl_{2n}$.

Similarly, let $\hat{S}$ be a $\mathbb{Z}_2$-graded irreducible representation of $Cl_{2n+1}$. Then, 
\begin{align*}
-\hat{\mu}_{\hat{S}}  + \hat{\gamma}_{2n+2}=-(\sum_{i=1}^{2n+1} x_i \hat{\gamma}_i) + \hat{\gamma}_{2n+2}
\end{align*}
provides a generator for $K^{-2n-1}(D^{2n+1}, \partial D^{2n+1})\cong \mathbb{Z}$. Here, $\hat{\gamma}_{2n+2}\colon \hat{\mathcal{H}}_{2n+1} \to \hat{S} \oplus \hat{\mathcal{H}}_{2n+1}$ is an invertible operator that anticommutes with the action of $Cl_{2n+1}$.
\end{thm}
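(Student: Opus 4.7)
The plan is to prove both assertions simultaneously by iterating the Atiyah--Singer suspension isomorphism $\mathrm{AS}$ from the preceding theorem $m$ times, starting from the point generator of $K^{-2m}(\mathrm{pt})$ from Proposition \ref{Prop:generator on pt}; here $m = 2n$ in the even case and $m = 2n+1$ in the odd case. Each application of $\mathrm{AS}$ raises the $K$-theory degree by one while enlarging the base by one factor of $I$, via the standard identification $(D^{k-1}\times I, \partial D^{k-1}\times I \cup D^{k-1}\times \partial I) \approx (D^k, \partial D^k)$. After $m$ iterations, we obtain the isomorphism $K^{-2m}(\mathrm{pt}) \cong K^{-m}(D^m, \partial D^m) \cong \mathbb{Z}$.

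To identify the generator explicitly, I propagate $\hat{A}_0 = \hat{0} + \hat{\gamma}^{(2m+1)}$ on $\hat{S}_{2m} \oplus \hat{\mathcal{H}}_{2m}$ (where $\hat{\gamma}^{(1)}, \ldots, \hat{\gamma}^{(2m+1)}$ denote the $Cl_{2m+1}$ generators) through the iteration. At the $k$-th step we drop the generator $\hat{\gamma}^{(2m+1-k)}$, and a straightforward induction yields
\begin{align*}
\mathrm{AS}^m(\hat{A}_0)(t_1,\ldots,t_m) = \hat{\gamma}^{(2m+1)}\prod_{i=1}^{m}\cos t_i \;-\; \sum_{j=1}^{m}\hat{\gamma}^{(2m+1-j)}\sin t_j\prod_{i=j+1}^{m}\cos t_i.
\end{align*}
The change of variables $x_j := -\sin t_j \prod_{i>j}\cos t_i$, with $x_0 := \prod_i\cos t_i = \sqrt{1-|x|^2} \geq 0$, realizes the homeomorphism $(I^m, \partial I^m) \approx (D^m, \partial D^m)$ through the upper hemisphere of $S^m$; after a suitable relabeling in which $\hat{\gamma}_j$ for $j = 1, \ldots, m$ denotes the generator multiplied by $x_j$ and $\hat{\gamma}_{m+1}$ denotes the generator multiplied by $x_0$, the iterate takes the form $\sqrt{1-|x|^2}\,\hat{\gamma}_{m+1} + \sum_{j=1}^{m} x_j\,\hat{\gamma}_j$.

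The final step is a straight-line homotopy through $(\mathcal{F}_m, \mathcal{F}_m^*)$:
\begin{align*}
\hat{A}_s(x) = \hat{\mu}_{\hat{S}}(x) + \bigl(s + (1-s)\sqrt{1-|x|^2}\bigr)\hat{\gamma}_{m+1}, \qquad s \in [0,1],
\end{align*}
interpolating between the iterated expression at $s=0$ and the stated generator $\hat{\mu}_{\hat{S}}(x) + \hat{\gamma}_{m+1}$ at $s=1$. On the boundary $\partial D^m$, where $|x| = 1$, the Clifford relations $\{\hat{\gamma}_j, \hat{\gamma}_{m+1}\} = 0$ together with $\hat{\gamma}_j^2 = \hat{\gamma}_{m+1}^2 = 1$ give $\hat{A}_s(x)^2 = 1 + s^2 > 0$, so $\hat{A}_s$ remains invertible on $\partial D^m$ throughout, validating the homotopy. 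The main obstacle is twofold: first, the careful inductive bookkeeping of which Clifford generator is ``spent'' at each iteration; and second, the reduction of the ambient Hilbert space from $\hat{S}_{2m} \oplus \hat{\mathcal{H}}_{2m}$ (natural to the iteration, where $\hat{S}_{2m}$ is a $Cl_{2m}$-irrep) to $\hat{S} \oplus \hat{\mathcal{H}}_m$ of the statement (where $\hat{S}$ is a $Cl_m$-irrep of smaller dimension), which is carried out by stabilization via Kuiper's theorem, absorbing the excess $Cl_m$-isotypic components of $\hat{S}_{2m}$ into $\hat{\mathcal{H}}_m$.
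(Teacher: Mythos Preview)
Your overall strategy—iterate the Atiyah--Singer map $\mathrm{AS}$ from a point generator and then pass to disk coordinates via spherical coordinates—is exactly the paper's. But you have introduced an unnecessary detour by starting from $K^{-2m}(\mathrm{pt})$ rather than $K^{-m}(\mathrm{pt})$ (for even $m=2n$; analogously $K^{-(m+1)}(\mathrm{pt})$ for odd $m$). The paper applies Proposition~\ref{Prop:generator on pt} with the $Cl_{2n}$-irrep $\hat S$ itself, applies $\mathrm{AS}$ exactly $2n$ times, and lands in $\mathcal F_0$ on $\hat S\oplus\hat{\mathcal H}_{2n}$. Because the finite-dimensional part of $\hat A_0$ is the zero map $\hat 0$, the $\sqrt{1-|x|^2}$-coefficient disappears on $\hat S$ automatically and the $\hat S$-block is already $\hat\mu_{\hat S}(x)=\sum x_i\hat\gamma_i$; no straight-line homotopy is needed, and the remaining $\hat{\mathcal H}_{2n}$-block is the (possibly $x$-dependent) invertible family that the theorem denotes $\hat\gamma_{2n+1}$.

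Your choice of $K^{-2m}(\mathrm{pt})$ forces the finite-dimensional piece to be a $Cl_{2m}$-irrep $\hat S_{2m}$ of dimension $2^m$ rather than the $Cl_m$-irrep $\hat S$ of dimension $2^{\lceil m/2\rceil}$, and your resulting operator lives in $\mathcal F_m$ rather than $\mathcal F_0$ (or $\mathcal F_1$). The final ``reduction'' you describe is therefore not a mere stabilization issue to be handled by Kuiper's theorem: what you actually need is the Bott periodicity homeomorphism $\mathcal F_0\approx\mathcal F_m$ of Lemma~\ref{lem:3.3}, i.e.\ you must \emph{un-tensor} the $Cl_m$-factor carried by $\hat\gamma^{(1)},\ldots,\hat\gamma^{(m)}$ (which your operator still anticommutes with) to recover the smaller $\hat S$. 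This can be done, but it is precisely the extra work your starting point creates; Kuiper alone does not accomplish it. If you restart from $K^{-m}(\mathrm{pt})$ in the even case (respectively $K^{-(m+1)}(\mathrm{pt})$ in the odd case, noting that a graded $Cl_{m+1}$-irrep restricts to the unique graded $Cl_m$-irrep of the same dimension), your argument collapses to the paper's and the reduction step vanishes.
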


\begin{proof}
Since the arguments for even and odd degrees are similar, we provide a proof only for the even case. Given that $K^{-2n}(pt) \cong \mathbb{Z}$, applying the suspension isomorphism $n$ times yields $K(I^n, \partial I^n) \cong \mathbb{Z}$. Below, we explicitly compute $AS^n$. We compute the iterated suspension using $\mathrm{AS}$ as defined above (with the minus sign in front of the sine term) and the standard representations fixed in Convention~\ref{conv:std-cliff}.
\begin{align*}
&AS(\hat{A})(t_1,...,t_{k+1}) = \hat{A}_{(t_1,...,t_k)} \cos{t_{k+1}} - \hat{\gamma}_{k+2} \sin{t_{k+1}},
&(t_1,...,t_{k+1})\in [-\pi/2,\pi/2]^{k+1}.
\end{align*}

Let $\hat{A} \in \mathcal{F}_{2n}(\hat{\mathcal{H}}_{2n})$. By the definition of the map $AS$, the expression for $AS^{2n}(\hat{A})$ is given by
\begin{align*}
AS^{2n}(\hat{A})(t_1, \dots, t_{2n}) = \big( \cdots \big( (\hat{A} \cos{t_1} - \hat{\gamma}_1 \sin{t_1}) \cos{t_2} - \hat{\gamma}_2 \sin{t_2} \big) \cdots \big) \cos{t_{2n}} - \hat{\gamma}_{2n} \sin{t_{2n}}.
\end{align*}
Here, for $j = 0, ..., 2n,$ we define $x_j = x_j(t_1, ..., t_{2n})$ as follows:
\begin{align*}
&x_0=\cos{t_1}\cos{t_2}\cdots \cos{t_{2n}},\\
&x_1=\sin{t_1}\cos{t_2}\cos{t_3}\cdots \cos{t_{2n}},\\
&\vdots\\
&x_{2n-1}=\sin{t_{2n-1}}\cos{t_{2n}},\\
&x_{2n}=\sin{t_{2n}}.
\end{align*}
This is the polar coordinate of the $2n$-dimensional sphere $S^{2n}\subset \mathbb{R}^{2n+1}$, and the map
\begin{align*}
&I^{2n} \to \{(x_0,...,x_{2n})\in \mathbb{R}^{2n+1}|x_0\ge 0\},&(t_1,...,t_{2n})\mapsto (x_0,...,x_{2n})
\end{align*}
is a homeomorphism. Furthermore, the following map is also a homeomorphism
\begin{align*}
&\{(x_0,...,x_{2n})|x_0\ge 0\} \to D^{2n}, &(x_0,...,x_{2n})\mapsto (x_1,...,x_{2n}).
\end{align*}
By using these homeomorphisms, we identify $AS^{2n}(\hat{A})$ with the map
\begin{align*}
AS^{2n}(\hat{A})\colon (D^{2n},\partial D^{2n})\to (\mathcal{F}_0(\hat{\mathcal{H}}_{2n}), \mathcal{F}_0^{*}(\hat{\mathcal{H}}_{2n})).
\end{align*}
This map has the following representation.
\begin{align*}
AS^{2n}(\hat{A})(x_1,...,x_{2n})=\sqrt{(1-\sum_{i=1}^{2n} {x_i}^2})\hat{A}-\sum_{i=1}^{2n} x_i \hat{\gamma}_i.
\end{align*}
If we let $\hat{S}$ be a $\mathbb{Z}_2$-graded irreducible representation of $Cl_{2n}$, then Proposition \ref{Prop:generator on pt} shows that the class of $[\hat{0} +  \hat{\gamma}_{2n+1}]\in K^{-2n}(pt)$ is a generator. Therefore,
\begin{align*}
AS^{2n}(\hat{0} +  \hat{\gamma}_{2n+1})=-\hat{\mu}_{\hat{S}} + AS^{2n}(\hat{\gamma}_{2n+1})
\end{align*}
is given, and $AS^{2n}(\hat{\gamma}_{2n+1})$ is the constant map whose value is an invertible operator $\hat{\mathcal{H}}_{2n}\to \hat{S}\oplus \hat{\mathcal{H}}_{2n}$ that anticommutes with the action of $Cl_{2n}.$
\end{proof}

\subsection{Chern character and Push-forward}\label{subsec:Chern character and Push-forward}


In this section, we define the Chern character for families of Fredholm operators. For the standard definition of the Chern character for vector bundles, see \cite{Husemoller}. The definition of the even-degree Chern character $\operatorname{Ch}_{\text{even}}$ in terms of differential forms is given in \cite{Tu}, while the definition of the odd-degree Chern character $\operatorname{Ch}_{\text{odd}}$ is provided in \cite{Quillen}. Additionally, in this section, we define the push-forward map along the inclusion map from a point, which is necessary for the proof of the main theorem. For a more general definition of the push-forward map, see \cite{Karoubi}. For the suspension isomorphism of singular cohomology, see, for example, \cite{Hatcher}.

\begin{dfn}\label{dfn:ASbar}
Let $I=[-\pi/2,\pi/2]$ and identify $S^{1}=I/\partial I$.
Define a continuous map
\[
\overline{\mathrm{AS}}: S^{1}\times \mathcal{F}_{1}\longrightarrow \mathcal{F}_{0}
\]
as follows. Consider the continuous map
\[
\mathcal{F}_{1}\times I \to \mathcal{F}_{0},\qquad (\hat{A},t)\mapsto \hat{A}\cos t-\hat\gamma_{2}\sin t .
\]
Since $\cos(\pm\pi/2)=0$, the restriction to $\mathcal{F}_{1}\times \partial I$ is constant,
hence the above map descends to the quotient $S^{1}\times \mathcal{F}_{1}$.
We denote the induced map by $\overline{\mathrm{AS}}$.
\end{dfn}

For $n, m \in \mathbb{N}$ with $n \leq m$, we define the Grassmann manifold $G_n(\mathbb{C}^m)$ as the set of all $n$-dimensional complex linear subspaces of $\mathbb{C}^m$. We define $BU(n)=G_n(\mathbb{C}^{ \infty})$ by:
\begin{align*}
BU(n) = G_n(\mathbb{C}^{ \infty}) = \bigcup_{m=n}^{\infty}G_n(\mathbb{C}^m).
\end{align*}
The inclusions \(\mathbb{C}^n \subset \mathbb{C}^{n+1} \subset \dots\) yield inclusions 
\(G_{m}(\mathbb{C}^n) \subset G_{m}(\mathbb{C}^{n+1}) \subset \dots\) and one can equip 
\(G_m(\mathbb{C}^{\infty})\) with the direct limit topology.

The maps \(G_{m}(\mathbb{C}^n) \hookrightarrow G_{m+1}(\mathbb{C}^{n+1})\), 
which consist of adding the subspace generated by the last vector 
\(e_{n+1} = (0, \dots, 0, 1)\), induce maps between the Grassmann manifolds  
\(BU(m) \hookrightarrow BU(m+1)\). Let  
\[
BU = \bigcup_{m=0}^{\infty} BU(m)
\]  
with the inductive limit topology.
\begin{thm}
\begin{enumerate}
\item The integer coefficient cohomology of BU is given as follows:
\begin{align*}
&H^{*}(BU;\mathbb{Z})\cong \mathbb{Z}[c_1,c_2,...], &c_i\in H^{2i}(BU;\mathbb{Z}).
\end{align*}
\item $\mathcal{F}_0$ and $\mathbb{Z} \times BU$ are homotopy equivalent:
\begin{align*}
\mathcal{F}_0 \simeq \mathbb{Z}\times BU.
\end{align*}
\end{enumerate}
\end{thm}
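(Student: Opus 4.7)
The plan is to treat the two parts independently, each reducing to a classical computation that I would cite rather than reconstruct in detail.

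For part $(1)$, I would first establish $H^{*}(BU(n);\mathbb{Z}) \cong \mathbb{Z}[c_1,\ldots,c_n]$ with $\deg c_i = 2i$ by induction on $n$, starting from $BU(1) = \mathbb{C}P^{\infty}$. The inductive step uses the sphere bundle $S^{2n-1} \to BU(n-1) \to BU(n)$ coming from $U(n)/U(n-1) \cong S^{2n-1}$; since the inductive hypothesis gives $H^{\mathrm{odd}}(BU(n-1);\mathbb{Z}) = 0$, the Gysin sequence collapses into short exact sequences whose connecting map is multiplication by the Euler class, forcing $H^{*}(BU(n);\mathbb{Z}) \cong \mathbb{Z}[c_1,\ldots,c_n]$ with the new generator $c_n$ equal to this Euler class. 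I would then compute the effect of the inclusion $BU(n) \hookrightarrow BU(n+1)$: the pullback of the tautological $(n+1)$-plane bundle is the tautological $n$-plane bundle $\oplus$ a trivial line bundle, hence $c_i \mapsto c_i$ for $i \leq n$ and $c_{n+1} \mapsto 0$. In each fixed cohomological degree the tower $\{H^{k}(BU(n);\mathbb{Z})\}$ is eventually constant, so $\varprojlim^{1} = 0$ in Milnor's exact sequence and $H^{*}(BU;\mathbb{Z}) \cong \varprojlim_{n} \mathbb{Z}[c_1,\ldots,c_n] \cong \mathbb{Z}[c_1, c_2, \ldots]$.

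For part $(2)$, I would invoke the Atiyah--J\"{a}nich theorem. Using the identification $\mathcal{F}_0 \approx Fred(\mathcal{H})$ recorded just after Lemma \ref{lem:3.3}, the plan is to construct a natural isomorphism $\mathrm{Index}\colon [X, Fred(\mathcal{H})] \to K^{0}(X)$ for compact Hausdorff $X$, as follows: for a family $A\colon X \to Fred(\mathcal{H})$, a compactness argument yields a closed subspace $V \subset \mathcal{H}$ of finite codimension with $\ker(A_x) \cap V = \{0\}$ for all $x$; the finite-dimensional quotients $\mathcal{H}/A_x(V)$ and $\mathcal{H}/V$ then assemble into vector bundles over $X$ whose difference gives an element of $K^{0}(X)$ independent of all choices. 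Surjectivity is shown by realizing $[E] - [F]$ explicitly as a Fredholm family, and injectivity uses Kuiper's theorem to null-homotope index-zero families through the contractible group $GL(\mathcal{H})$. Combined with the standard Atiyah--Hirzebruch identification $K^{0}(X) \cong [X, \mathbb{Z} \times BU]$ from the vector bundle model, this yields the homotopy equivalence $\mathcal{F}_0 \simeq \mathbb{Z} \times BU$.

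The main obstacle is the Atiyah--J\"{a}nich step: proving that $\mathrm{Index}$ is well-defined and bijective is a genuine piece of infinite-dimensional operator theory that relies essentially on Kuiper's theorem and on Bartle--Graves-style sectioning arguments to produce the bundles $\mathcal{H}/A(V)$. Part $(1)$ by comparison is a routine spectral-sequence computation. Since both results are classical and extensively documented, the cleanest presentation would simply quote standard references (for instance Milnor--Stasheff for the cohomology of $BU$ and Atiyah's K-theory book or J\"{a}nich's original note for the Fredholm model) rather than reproducing the proofs in full.
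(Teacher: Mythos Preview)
Your proposal is correct; the difference lies in part $(2)$. The paper does not reconstruct the Atiyah--J\"{a}nich index map. Instead it observes that both $\mathcal{F}_0$ and $\mathbb{Z}\times BU$ are (zeroth spaces of) $\Omega$-spectra representing the same generalized cohomology theory $K^*$ on CW-complexes, and then invokes the general fact that an $\Omega$-spectrum representing a given cohomology theory is unique up to homotopy equivalence (citing \cite{Araki}, \cite{Rudyak}). Your route is the concrete one: build the index bundle, prove bijectivity using Kuiper's theorem, and compare with the vector-bundle model of $K^0$. Your approach is more self-contained and gives an explicit equivalence, at the cost of the operator-theoretic work you flag; the paper's approach is shorter but presupposes that the reader already accepts both spaces as representing $K$-cohomology, which in the Fredholm case is essentially the Atiyah--J\"{a}nich/Atiyah--Singer theorem anyway---so the two arguments ultimately rest on the same foundation, packaged differently. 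For part $(1)$ you agree with the paper: both end by citing Milnor--Stasheff (and Husemoller), though you helpfully sketch the Gysin/inverse-limit argument that those references carry out.
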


\begin{proof}
For the first claim, see \cite{Milnor} and \cite{Husemoller}. $\mathbb{Z} \times BU$ and $\mathcal{F}_0$ are $\Omega$-spectra representing K-cohomology in the category of CW-complexes. Since an $\Omega$-spectrum representing a generalized cohomology $h^{*}$ on the category of CW-complexes is uniquely determined up to homotopy equivalence $($\cite{Araki}, \cite{Rudyak}$)$, the second claim follows.
\end{proof}

Let $n \in \mathbb{N}$ be a natural number. Set $p_0 = n$, and for each $k \in \mathbb{Z}$, we define $p_k\in H^{2k}(BU; \mathbb{Z})$ inductively by the following formula.
\begin{align*}
p_k=c_1 p_{k-1}-c_2 p_{k-2}+\cdots +(-1)^{k}c_{k-1}p_1 +(-1)^{k+1}kc_k.
\end{align*}
\begin{dfn}\label{dfn:univ-chern}
\begin{enumerate}
\item For each $n \in \N$, we define the $n$-th universal Chern character
$\operatorname{ch}_n^{\mathrm{univ}} \in H^{2n}(\mathcal{F}_0; \mathbb{Q})$ by
\[
\operatorname{ch}_n^{\mathrm{univ}}=\frac{p_n}{n!}.
\]

\item For each $n \in \N$, we define the universal odd Chern character class
$\operatorname{ch}_{n+\frac{1}{2}}^{\mathrm{univ}} \in H^{2n+1}(\mathcal{F}_1;\mathbb{Q})$ by
\[
\operatorname{ch}_{n+\frac12}^{\mathrm{univ}}
:=\Bigl(\overline{\mathrm{AS}}^{\,*}\operatorname{ch}^{\mathrm{univ}}_{n+1}\Bigr)/[S^{1}],
\]
where $\overline{\mathrm{AS}}:S^{1}\times\mathcal{F}_{1}\to\mathcal{F}_{0}$ is the map
in Definition~\ref{dfn:ASbar}, and $-/[S^{1}]$ denotes the slant product with the fundamental
class $[S^{1}]\in H_{1}(S^{1};\mathbb{Z})$; see \cite{Hatcher}.
\end{enumerate}
\end{dfn}

\begin{dfn}\label{dfn:chern-on-X}
Let $X$ be a \emph{pointed compact Hausdorff space}.
Let $n \in \N$.
\begin{enumerate}
\item
For any $[\hat{A}] \in K(X)$, we define $\operatorname{ch}_{n}([\hat{A}]) \in H^{2n}(X;\mathbb{Q})$ as follows:
\begin{align*}
&\operatorname{ch}_n([\hat{A}])=\hat{A}^{*}\operatorname{ch}_n^{\mathrm{univ}} \in H^{2n}(X;\mathbb{Q}), & n\ge 1,\\
&\operatorname{ch}_0([\hat{A}])=Ind([A_x]) \in H^0(X;\mathbb{Q}), &x \in X.
\end{align*}
\item For any $[\hat{A}] \in K^{1}(X)$, we define $\operatorname{ch}_{n+\frac{1}{2}}([\hat{A}]) \in H^{2n+1}(X;\mathbb{Q})$ as follows:
\begin{align*}
&\operatorname{ch}_{n+\frac{1}{2}}([\hat{A}])=\hat{A}^{*}\operatorname{ch}_{n+\frac{1}{2}}^{\mathrm{univ}} \in H^{2n+1}(X;\mathbb{Q}).
\end{align*}
\item The homomorphisms \(\operatorname{Ch}_{even}\) and \(\operatorname{Ch}_{odd}\) from the $K$-group to singular cohomology are defined as follows:
\begin{align*}
&\operatorname{Ch}_{even}\colon K(X)\to H^{even}(X;\mathbb{Q}), &\operatorname{Ch}_{even}([A])=\operatorname{ch}_{0}([A])+\operatorname{ch}_1([A])+\cdots,\\
&\operatorname{Ch}_{odd}\colon K^1(X)\to H^{odd}(X;\mathbb{Q}), &\operatorname{Ch}_{odd}([A])=\operatorname{ch}_{\frac{1}{2}}([A])+\operatorname{ch}_{\frac{3}{2}}([A])+\cdots.
\end{align*}
\end{enumerate}
$\operatorname{Ch}_{even}$ and $\operatorname{Ch}_{odd}$ are called the Chern characters.
\end{dfn}

Let $X$ be a pointed space. The inclusion map from the basepoint to $X$ is denoted by $i$. From the definition of the Chern character, it follows that $i^{*}\operatorname{Ch}(\hat{A}) = \operatorname{Ch}(i^{*}A)$. Therefore, the Chern character induces a homomorphism from the reduced $K$-group to the reduced singular cohomology:
\begin{align*}
&\operatorname{Ch}_{even}\colon \tilde{K}(X)\to \tilde{H}^{even}(X;\mathbb{Q}),&\operatorname{Ch}_{odd}\colon \tilde{K}^1(X)\to \tilde{H}^{odd}(X;\mathbb{Q}).
\end{align*}

\begin{prop}\label{prop:Commute}
Let $X$ be a \emph{pointed compact Hausdorff space}.
The suspension isomorphism in reduced singular cohomology and the Chern character commute.
More precisely, the following two diagrams commute.

\medskip
\noindent{\rm (i) Even-to-odd:}
\begin{center}
\begin{tikzpicture}[thick]
\node (a) at (0, 2.8) {$\tilde{K}^{0}(X)$};
\node (x) at (3.6, 2.8) {$\tilde{H}^{\mathrm{even}}(X;\mathbb{Q})$};
\node (b) at (0, 0) {$\tilde{K}^{1}(SX)$};
\node (y) at (3.6, 0) {$\tilde{H}^{\mathrm{odd}}(SX;\mathbb{Q})$};

\draw[->] (a) -- node[midway, above] {\(\scriptstyle \operatorname{Ch}_{\mathrm{even}}\)} (x);
\draw[->] (x) -- node[midway, right] {\(\scriptstyle \delta\)} (y);
\draw[->] (a) -- node[midway, left] {\(\scriptstyle AS\)} (b);
\draw[->] (b) -- node[midway, below] {\(\scriptstyle \operatorname{Ch}_{\mathrm{odd}}\)} (y);
\end{tikzpicture}
\end{center}

\medskip
\noindent{\rm (ii) Odd-to-even:}
\begin{center}
\begin{tikzpicture}[thick]
\node (a) at (0, 2.8) {$\tilde{K}^{1}(X)$};
\node (x) at (3.6, 2.8) {$\tilde{H}^{\mathrm{odd}}(X;\mathbb{Q})$};
\node (b) at (0, 0) {$\tilde{K}^{0}(SX)$};
\node (y) at (3.6, 0) {$\tilde{H}^{\mathrm{even}}(SX;\mathbb{Q})$};
\draw[->] (a) -- node[midway, above] {\(\scriptstyle \operatorname{Ch}_{\mathrm{odd}}\)} (x);
\draw[->] (x) -- node[midway, right] {\(\scriptstyle \delta\)} (y);
\draw[->] (a) -- node[midway, left] {\(\scriptstyle AS\)} (b);
\draw[->] (b) -- node[midway, below] {\(\scriptstyle \operatorname{Ch}_{\mathrm{even}}\)} (y);
\end{tikzpicture}
\end{center}
\end{prop}
\begin{proof}
We prove (i). The proof of (ii) is identical by shifting degrees.

\smallskip
\noindent\textbf{Step 1 (Naturality of the slant product).}
Let $Y,Z$ be compact Hausdorff spaces and $f:Y\to Z$ continuous.
For any $\alpha\in H^{m}(S^{1}\times Z;\mathbb{Q})$, by naturality of the slant product
(see \cite{Hatcher}), we have
\[
(id_{S^{1}}\times f)^{*}(\alpha)/[S^{1}] = f^{*}(\alpha/[S^{1}]).
\]

\smallskip
\noindent\textbf{Step 2 (Suspension in cohomology).}
Let $\delta:\tilde H^{m}(X;\mathbb{Q})\to \tilde H^{m+1}(SX;\mathbb{Q})$ be the reduced suspension isomorphism.
Using the identification $SX\simeq S^{1}\wedge X$, $\delta$ is represented by the standard construction
via $S^{1}$ (see \cite{Hatcher}).

\smallskip
\noindent\textbf{Step 3 (Computation).}
Let $\hat A:X\to \mathcal{F}_{0}$ represent a class in $\tilde K^{0}(X)$.
Then $\operatorname{ch}_{n}(\hat A)=\hat A^{*}\operatorname{ch}^{\mathrm{univ}}_{n}$.
On the other hand, the suspended $K$-class $AS(\hat A)\in \tilde K^{1}(SX)$ is represented by a map
$AS(\hat A):SX\to \mathcal{F}_{1}$, and by Definition~\ref{dfn:univ-chern}(2),
\[
\operatorname{ch}_{n+\frac12}(AS(\hat A))
=
AS(\hat A)^{*}\Bigl( (\overline{\mathrm{AS}}^{\,*}\operatorname{ch}^{\mathrm{univ}}_{n+1})/[S^{1}] \Bigr).
\]
By Step 1, this equals
\[
\Bigl( (id_{S^{1}}\times AS(\hat A))^{*}\,\overline{\mathrm{AS}}^{\,*}\operatorname{ch}^{\mathrm{univ}}_{n+1}\Bigr)/[S^{1}].
\]
Since both $AS$ and $\overline{\mathrm{AS}}$ are induced from the Atiyah--Singer deformation
$A\cos t-\hat\gamma_{2}\sin t$, the above class coincides with the cohomological suspension
$\delta(\hat A^{*}\operatorname{ch}^{\mathrm{univ}}_{n})=\delta(\operatorname{ch}_{n}(\hat A))$.
Thus $\operatorname{Ch}_{\mathrm{odd}}(AS(\hat A))=\delta(\operatorname{Ch}_{\mathrm{even}}(\hat A))$, proving (i).
\end{proof}

\begin{rem}\label{rem:iterated-AS}
The Atiyah--Singer suspension isomorphism can be iterated, yielding isomorphisms
$\tilde K^{-n-1}(X)\cong \tilde K^{-n}(SX)$ for all $n$ (see \S\ref{subsec:Description of the generator}).
Since the parity alternates under suspension, one obtains two types of commutative diagrams:
even-to-odd and odd-to-even. Proposition~\ref{prop:Commute} records both types, and the
iterated commutativity follows by repeating the case of one suspension.
\end{rem}

\begin{dfn}
Let $X$ be a compact Hausdorff space. Fix a point $x\in X$.
Assume there exists a closed neighborhood $D^{n}\subset X$ of $x$ which is homeomorphic
to the closed $n$-disk, and such that $x$ corresponds to the center.
Then, the push-forward
\[
i_*^{K} \colon K^0(\{x\}) \to K^n(X)
\]
is defined from the following diagram:
\[
\begin{tikzpicture}[thick]
\node (a) at (0, 0) {$K^0(\{x\})$};
\node (b) at (3.2, 0) {$K^n(D^{n}, \partial D^{n})$};
\node (c) at (7.0, 0) {$K^n(X, X \setminus \mathrm{int}(D^n))$};
\node (d) at (10.8, 0) {$K^n(X, X \setminus \{x\})$};
\node (e) at (10.8, -3) {$K^n(X)$};

\draw[->] (a) --
  node[midway, above] {\(\scriptstyle \mathrm{AS}^{n}\)}
  node[midway, below] {\(\scriptstyle \cong\)}
(b);
\draw[->] (b) --
  node[midway, above] {\(\scriptstyle j^*\)}
  node[midway, below] {\(\scriptstyle \cong\)}
(c);
\draw[->] (c) --
  node[midway, above] {\(\scriptstyle k^*\)}
  node[midway, below] {\(\scriptstyle \cong\)}
(d);

\draw[->] (d) -- node[midway, right] {\(\scriptstyle \ell^*\)} (e);
\draw[->] (a) -- node[pos=0.55, above left] {\(\scriptstyle i_*^K\)} (e);
\end{tikzpicture}
\]
Here $\mathrm{AS}^{n}$ denotes the $n$-fold iteration of the Atiyah--Singer suspension
isomorphism, which identifies $K^{0}(\{x\})\cong K^{n}(D^{n},\partial D^{n})$.
Moreover,
\[
j\colon (D^n,\partial D^n)\to (X, X \setminus \mathrm{int}(D^n)),
\]
\[
k\colon (X, X \setminus \mathrm{int}(D^n))\to (X, X \setminus \{x\}),
\]
and
\[
\ell\colon (X,\emptyset)\to (X, X \setminus \{x\})
\]
are the canonical inclusion maps.
We call $i_*^{K}$ the push-forward.
\end{dfn}

\begin{prop}\label{prop:chern character and push forward}
Let $X$ be a compact Hausdorff space. Suppose there exists a closed neighborhood of some
$x \in X$ that is homeomorphic to the $n$-dimensional closed disk $D^n$.
Then the push-forward in $K$-theory and the push-forward in singular cohomology commute.
More precisely, the following two cases occur.

\begin{enumerate}
\item If $n=2m+1$ is odd, then the following diagram commutes.
\begin{center}
\begin{tikzpicture}[thick]
\node (a) at (0, 2.8) {$K^{0}(\{x\})$};
\node (x) at (3.6, 2.8) {$H^{\mathrm{even}}(\{x\};\mathbb{Q})$};
\node (b) at (0, 0) {$K^{1}(X)$};
\node (y) at (3.6, 0) {$H^{\mathrm{odd}}(X;\mathbb{Q})$};

\draw[->] (a) -- node[midway, above] {\(\scriptstyle \operatorname{Ch}_{\mathrm{even}}\)} (x);
\draw[->] (x) -- node[midway, right] {\(\scriptstyle i_{*}^{H}\)} (y);
\draw[->] (a) -- node[midway, left] {\(\scriptstyle i_{*}^{K}\)} (b);
\draw[->] (b) -- node[midway, below] {\(\scriptstyle \operatorname{Ch}_{\mathrm{odd}}\)} (y);
\end{tikzpicture}
\end{center}

\item If $n=2m$ is even, then the following diagram commutes.
\begin{center}
\begin{tikzpicture}[thick]
\node (a) at (0, 2.8) {$K^{0}(\{x\})$};
\node (x) at (3.6, 2.8) {$H^{\mathrm{even}}(\{x\};\mathbb{Q})$};
\node (b) at (0, 0) {$K^{0}(X)$};
\node (y) at (3.6, 0) {$H^{\mathrm{even}}(X;\mathbb{Q})$};

\draw[->] (a) -- node[midway, above] {\(\scriptstyle \operatorname{Ch}_{\mathrm{even}}\)} (x);
\draw[->] (x) -- node[midway, right] {\(\scriptstyle i_{*}^{H}\)} (y);
\draw[->] (a) -- node[midway, left] {\(\scriptstyle i_{*}^{K}\)} (b);
\draw[->] (b) -- node[midway, below] {\(\scriptstyle \operatorname{Ch}_{\mathrm{even}}\)} (y);
\end{tikzpicture}
\end{center}
\end{enumerate}

Here, $i_*^K$ is the push-forward in $K$-theory defined above by using $\mathrm{AS}^{n}$,
and $i_*^H$ is the push-forward in singular cohomology.
\end{prop}

\begin{proof}
By definition, $i_*^K$ is obtained by composing the iterated Atiyah--Singer suspension
isomorphism $\mathrm{AS}^{n}$ with excision and the canonical map from relative $K$-groups
to absolute $K$-groups.

Proposition~\ref{prop:Commute} gives the commutativity of the Chern character with one
suspension step, and Remark~\ref{rem:iterated-AS} implies that the same commutativity
holds for the iterated suspension $\mathrm{AS}^{n}$.
Since the excision isomorphism and the natural maps of pairs are compatible with pullbacks
in singular cohomology, the Chern character commutes with the whole construction of the
push-forward.

If $n=2m+1$, the parity changes from even to odd, and we obtain the first diagram.
If $n=2m$, the parity returns to even, and we obtain the second diagram.
\end{proof}


\section{Expression of Chern character by using Fermi points}
\label{sec:chern character by Fermi points}


\subsection{Approximation of family of Fredholm operators}
\label{sub:Approximation of family of Fredholm operators}

\begin{rem}[Roadmap of \S\ref{sec:chern character by Fermi points}]\label{rem:roadmap-fermi}
The goal of this section is to express the (even/odd) Chern character as a sum of local
contributions at Fermi points.
We first construct a finite-dimensional approximation $(\hat{\mathcal{H}},\hat A)_{<\mu}$ (resp.\ $(\mathcal H,A)_{<\mu}$),
and define sign coordinates and $\operatorname{sign}(x)$ via the Jacobian sign.
Next, we fix the normalization of the canonical isomorphisms
$K(D^{m},\partial D^{m})\cong \mathbb Z$ by declaring that the \emph{standard} local model
built from the \emph{standard} Clifford representation fixed in Convention~\ref{conv:std-cliff}
represents $1\in\mathbb Z$ (cf.\ \S\ref{subsec:Description of the generator} and Convention~\ref{conv:odd-generator} in odd degrees).
With this normalization, the main theorems follow by excision and by summing up the local
classes $i_*^K(1)=\operatorname{sign}(x)[A]$ (resp.\ $i_*^K(1)=\operatorname{sign}(x)[\hat A]$) over Fermi points.
\end{rem}

In this section, we introduce a finite-dimensional approximation of a Fredholm operator following \cite{G2}.

Let $X$ be a topological space, $\hat{\mathcal{H}}$ a $\mathbb{Z}_2$-graded separable infinite-dimensional Hilbert space, and 
$\hat{A} \colon X\to \mathcal{F}_0(\hat{\mathcal{H}})$ a continuous map with $x_0 \in X$ and $0\in Spec(A_{x_0}).$

\begin{prop} \label{prop:mu exist}
There exists a positive real number $\mu >0$ and an open neighborhood U of $x_0$ that satisfy the following three properties:
\begin{enumerate}
\item $Spec(\hat{A}_x)\cap [0, \mu)$ is a finite set.
\item For $\lambda \in Spec(\hat{A}_x)\cap [0, \mu)$, the eigenspace corresponding to $\lambda$ is finite-dimensional.
\item For any $x \in U$, $\dim(\bigoplus_{|\lambda|<\mu}\ker(\hat{A}_{x_0}-\lambda))=\dim(\bigoplus_{\lvert \lambda\rvert<\mu}\ker(\hat{A}_{x}-\lambda)).$
\end{enumerate}
\end{prop}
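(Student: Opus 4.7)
The plan is to construct $\mu$ and $U$ from a Riesz spectral projection around $0$, then use norm-continuity of $\hat{A}$ to transport rank information from $x_0$ to nearby points. Throughout I rely on the fact that each $\hat{A}_x$ is a self-adjoint Fredholm operator, so its spectrum is real and $0$ lies outside its essential spectrum; in particular, $0$ is an isolated eigenvalue of $\hat{A}_{x_0}$ with finite-dimensional eigenspace.

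First I would choose $\mu_0 > 0$ with $\operatorname{Spec}(\hat{A}_{x_0}) \cap [-\mu_0, \mu_0] = \{0\}$, fix any $\mu \in (0, \mu_0)$, and let $\Gamma \subset \mathbb{C}$ be the circle $|z| = \mu$, which lies entirely in the resolvent set of $\hat{A}_{x_0}$. Define the Riesz projection
\[
P_{x_0} = \frac{1}{2\pi i} \oint_\Gamma (z - \hat{A}_{x_0})^{-1}\, dz,
\]
which is the orthogonal projection onto $\ker(\hat{A}_{x_0})$, hence of finite rank. Since $z \mapsto (z - \hat{A}_{x_0})^{-1}$ is norm-bounded on the compact curve $\Gamma$ and $\hat{A}$ is continuous in operator norm, a Neumann-series estimate produces an open neighborhood $U$ of $x_0$ such that for every $x \in U$ and every $z \in \Gamma$ the operator $z - \hat{A}_x$ is invertible, with inverse depending continuously on $(x, z)$. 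Consequently
\[
P_x = \frac{1}{2\pi i} \oint_\Gamma (z - \hat{A}_x)^{-1}\, dz
\]
is a norm-continuous family of idempotents on $U$.

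Next, shrinking $U$ if necessary, the elementary fact that two projections at norm-distance less than one have equal rank yields $\operatorname{rank}(P_x) = \operatorname{rank}(P_{x_0}) = \dim \ker(\hat{A}_{x_0})$ for all $x \in U$. By holomorphic functional calculus, $P_x$ commutes with $\hat{A}_x$ and its range is the algebraic spectral subspace of $\hat{A}_x$ for the part of $\operatorname{Spec}(\hat{A}_x)$ enclosed by $\Gamma$. That part therefore consists of finitely many eigenvalues with finite-dimensional eigenspaces whose multiplicities sum to $\operatorname{rank}(P_x)$, and all of them lie in the real interval $(-\mu, \mu)$ by self-adjointness. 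This immediately yields (1), (2), and (3).

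The main obstacle is nothing conceptual: it is only the uniform control of the resolvent on $\Gamma$ needed to make the contour integral valid for every $x \in U$ simultaneously. This is routine from the openness of the invertible operators in the norm topology, but it is the one place where the norm-continuity hypothesis on $\hat{A}$ is genuinely used; everything else is standard spectral theory for self-adjoint Fredholm operators.
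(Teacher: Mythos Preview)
Your proof is correct and is essentially a rigorous expansion of the paper's own argument. The paper's proof is only a two-sentence sketch: it observes that $0$ is in the discrete spectrum of $\hat{A}_{x_0}$, picks $\mu$ accordingly, and then invokes ``continuity of the eigenvalues'' to obtain the neighborhood $U$. Your Riesz-projection argument is precisely the standard way to make that continuity statement rigorous, so the approaches are the same in spirit; you have simply supplied the details the paper omits.
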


\begin{proof}
Because $\hat{A}_{x_0}$ is a Fredholm operator, $0\in Spec(\hat{A}_{x_0})$ is a discrete spectrum, so $[0, \mu)\cap Spec(\hat{A}_{x_0})$
is a finite set and there exists a positive real number $\mu>0$ such that any $\lambda \in Spec(\hat{A}_{x_0})\cap [0, \mu)$ has finite multiplicity.
Therefore, due to the continuity of the eigenvalues, we can obtain an open neighborhood of $x_0$ that fulfills the assertion of the proposition.
\end{proof}

In what follows, we assume that $U$ and $\mu>0$ are given by the above Proposition \ref{prop:mu exist}.
\begin{dfn}
\begin{enumerate}
\item For $x\in U$, we define $(\mathcal{\hat{H}}, \hat{A}_x)_{<\mu}$ to be the direct sum of eigenspaces of $\hat{A}_x$ whose eigenvalues $\lambda$ satisfy $|\lambda|<\mu$ :
\begin{align*}
(\mathcal{\hat{H}}, \hat{A}_x)_{<\mu}=\bigoplus_{|\lambda|<\mu}\ker(\hat{A}_{x}-\lambda).
 \end{align*}
\item We define the family of vector spaces $(\hat{\mathcal{H}}, \hat{A})$ over $U$ as the set whose vector space at each point $x$ is $(\hat{\mathcal{H}}, \hat{A}_x)$:
\begin{align*}
(\mathcal{\hat{H}}, \hat{A})_{<\mu}=\bigcup_{x \in U}(\mathcal{\hat{H}}, \hat{A}_x)_{<\mu}\subset U\times \mathcal{\hat{H}}.
\end{align*}
\item We define $F$ as the set of all $x \in X$ such that $\hat{A}_x$ has $0$ in its spectrum:
\begin{align*}
F = \{ x \in X \mid 0 \in \mathrm{Spec}(\hat{A}_{x}) \}.
\end{align*}
We call this $F$ the Fermi set with respect to $\hat{A}$. Similarly, for an ungraded Fredholm operator, we define the Fermi set in the same manner.
\end{enumerate}
\end{dfn}

 The purpose of this subsection is to establish:
 
\begin{prop}\label{prop:apporoximation vector bundle}
$(\hat{\mathcal{H}}, \hat{A})_{<\mu}\subset U\times \hat{\mathcal{H}}$ is a $\mathbb{Z}_2$-graded complex vector bundle of finite rank.
 \end{prop}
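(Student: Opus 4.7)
The plan is to construct a continuous family of finite-rank Riesz spectral projections $P_x$ onto $(\hat{\mathcal{H}},\hat{A}_x)_{<\mu}$ and use them to produce local trivializations of $(\hat{\mathcal{H}},\hat{A})_{<\mu} \to U$. First, I would fix an arbitrary $x_0 \in U$ and, using that $\hat{A}_{x_0}$ is self-adjoint Fredholm (so its spectrum is real and discrete near $0$) together with Proposition~\ref{prop:mu exist}, shrink $\mu$ slightly if necessary so that $\pm\mu \notin \mathrm{Spec}(\hat{A}_{x_0})$ while preserving the subspace $(\hat{\mathcal{H}},\hat{A}_{x_0})_{<\mu}$. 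I would then choose a rectangular contour $\Gamma \subset \mathbb{C}$ enclosing the interval $[-\mu,\mu]$ on the real axis, symmetric under $z \mapsto -z$, and disjoint from $\mathrm{Spec}(\hat{A}_{x_0})$.

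Next, since $\Gamma$ is compact and $\mathrm{dist}(\Gamma,\mathrm{Spec}(\hat{A}_{x_0})) > 0$, continuity of $x \mapsto \hat{A}_x$ in the operator norm will furnish a smaller neighborhood $V \subset U$ of $x_0$ on which $\Gamma \cap \mathrm{Spec}(\hat{A}_x) = \emptyset$ with uniformly bounded resolvents. Setting
\[
P_x := \frac{1}{2\pi i}\oint_\Gamma (z-\hat{A}_x)^{-1}\, dz
\]
then yields a continuous family of bounded idempotents, and standard spectral theory identifies $\mathrm{Im}(P_x)$ with $(\hat{\mathcal{H}},\hat{A}_x)_{<\mu}$. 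To produce a local frame I would pick a basis $v_1,\ldots,v_r$ of $(\hat{\mathcal{H}},\hat{A}_{x_0})_{<\mu}$ (with $r$ the constant rank from Proposition~\ref{prop:mu exist}(3), which is finite by parts (1) and (2)), and consider the continuous sections $s_i(x) := P_x v_i$. At $x_0$ they form a basis, and continuity together with constancy of $\dim\mathrm{Im}(P_x) = r$ forces them to remain linearly independent on a possibly smaller neighborhood $V' \subset V$, producing a trivialization $V' \times \mathbb{C}^r \xrightarrow{\cong} (\hat{\mathcal{H}},\hat{A})_{<\mu}|_{V'}$.

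Finally, for the $\mathbb{Z}_2$-grading I would observe that, since $\hat{A}_x$ has degree $1$, the grading involution $\hat{\epsilon}$ anticommutes with $\hat{A}_x$, giving $\hat{\epsilon}(z-\hat{A}_x)^{-1}\hat{\epsilon} = (z+\hat{A}_x)^{-1}$. The symmetry of $\Gamma$ under $z \mapsto -z$ then yields $\hat{\epsilon}P_x\hat{\epsilon} = P_x$ via a change of variables, so $P_x$ is of degree $0$ and $\mathrm{Im}(P_x)$ inherits a $\mathbb{Z}_2$-grading; choosing the $v_i$ to be homogeneous will produce a graded trivialization. The hard part will be the contour construction, namely verifying that the spectral gap at $\pm\mu$ for $\hat{A}_{x_0}$ can be arranged and persists in an operator-norm neighborhood uniformly along $\Gamma$; this relies crucially on both self-adjointness (to keep the spectrum real, so a rectangle around $[-\mu,\mu]$ suffices) and the Fredholm hypothesis (to ensure that the spectrum near $0$ is discrete with finite-dimensional eigenspaces).
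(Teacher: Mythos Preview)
Your proof is correct and takes the classical Riesz--projection route, which differs from the paper's argument. The paper instead works directly with the orthogonal spectral projections $\pi_y\colon\hat{\mathcal{H}}\to(\hat{\mathcal{H}},\hat{A}_y)_{<\mu}$ and builds the local trivialization in the opposite direction: rather than pushing a basis at $x_0$ forward via $P_y$, it projects every nearby fiber back onto the fixed fiber at $x$ via $\overline{\pi_x}$. Injectivity of this map on each fiber is obtained from a short diagram chase showing that $(1-\pi_x)(1-\pi_y)$ restricts to an isomorphism of $(\hat{\mathcal{H}},\hat{A}_x)_{<\mu}^\perp$ for $y$ near $x$, together with the equality of dimensions from Proposition~\ref{prop:mu exist}(3).

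Your contour-integral construction has the advantage of making the continuity of $y\mapsto P_y$ explicit; the paper silently assumes the continuity of $y\mapsto\pi_y$ when invoking openness of invertibles. You also treat the $\mathbb{Z}_2$-grading explicitly via the symmetry $\hat{\epsilon}P_x\hat{\epsilon}=P_x$, which the paper does not address. One small point worth tightening: after shrinking $\mu$ to some $\mu'<\mu$ at $x_0$, you must argue that $\mathrm{Im}(P_x)=(\hat{\mathcal{H}},\hat{A}_x)_{<\mu'}$ actually coincides with the original $(\hat{\mathcal{H}},\hat{A}_x)_{<\mu}$ for all $x$ near $x_0$, not just at $x_0$. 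This follows since $(\hat{\mathcal{H}},\hat{A}_x)_{<\mu'}\subset(\hat{\mathcal{H}},\hat{A}_x)_{<\mu}$ and both have dimension $r$ by Proposition~\ref{prop:mu exist}(3) combined with the norm-continuity of $P_x$, but it deserves a sentence.
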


 \begin{proof}
Fix $x \in U$ and we will show that the family of vector spaces $(\hat{\mathcal{H}},\hat{A})_{<\mu}$ can be locally trivialized in some open neighborhood of $x$.

For each \( y \in U \), we define the projection \( \pi_y :\hat{\mathcal{H}} \to (\hat{\mathcal{H}}, \hat{A}_y)_{<\mu} \). Let $i:(\hat{\mathcal{H}}, \hat{A}_x)_{<\mu} \hookrightarrow \hat{\mathcal{H}}$ be an inclusion map.
We have $(1_{\hat{\mathcal{H}}}-\pi_{x})\circ(1_{\hat{\mathcal{H}}}-\pi_{x})\circ i=1_{(\hat{\mathcal{H}}, \hat{A}_x)_{<\mu}^{\perp}}:(\hat{\mathcal{H}}, \hat{A}_x)_{<\mu}^{\perp} \to (\hat{\mathcal{H}}, \hat{A}_x)_{<\mu}^{\perp}$,
and noting that the set of invertible bounded operators is an open set in $\mathcal{B}(\hat{\mathcal{H}})$,
we obtain the following assertion.

For any $y \in U$, there exists an open neighborhood $V \subset U$ of $x$ such that $(1_{\hat{\mathcal{H}}}-\pi_{x})\circ(1_{\hat{\mathcal{H}}}-\pi_{y})\circ i:(\hat{\mathcal{H}}, \hat{A}_x)_{<\mu}^{\perp} \to (\hat{\mathcal{H}}, \hat{A}_x)_{<\mu}^{\perp}$ is a linear isomorphism.

Then, the continuous map $\overline{\pi_{x}}:(\hat{\mathcal{H}}, \hat{A})_{<\mu}|_{V}\to V\times(\hat{\mathcal{H}}, \hat{A}_x)_{<\mu}$ induced by $\pi_{x}$ gives a local trivialization in some open neighborhood of $x$. In fact, for any $y \in V$, the following diagram commutes.

\begin{tikzpicture}[thick]
\node (a) at (0, 0) {$0$}; 
\node (b) at (2.7, 0) {$(\hat{\mathcal{H}}, A_x)_{<\mu}^{\perp}$};
\node (c) at (5.4, 0) {$\hat{\mathcal{H}}$};   
\node (d) at (8.1, 0) {$(\hat{\mathcal{H}}, A_x)_{<\mu}$};
\node (e) at (10.8, 0) {$0$};  
\node (f) at (5.4, 2.7) {$\ker\left( (1_{\hat{\mathcal{H}}}-\pi_{x})\circ(1_{\hat{\mathcal{H}}}-\pi_{y}) \right)$};
\node (g) at (5.4, -2.7) {$(\hat{\mathcal{H}}, A_x)_{<\mu}^{\perp}$}; 
\node (h) at (12.3, 0) {$(\text{exact})$}; 

\draw[->] (a) -- (b);
\draw[->] (b) -- node[midway, above] {\(\scriptstyle i\)} (c);
\draw[->] (c) -- node[midway, above] {\(\scriptstyle \pi_{x}\)} (d);
\draw[->] (d) -- (e);

\draw[->] (f) -- node[midway, right] {\(\scriptstyle i\)} (c);

\draw[->] (c) -- node[midway, right] {\(\scriptstyle (1_{\hat{\mathcal{H}}}-\pi_{x})\circ(1_{\hat{\mathcal{H}}}-\pi_{y})\)} (g);

\draw[->] (b) -- node[midway, left] {\(\scriptstyle \cong\)} (g);
\draw[->] (f) -- node[pos=0.6, above right] {\(\scriptstyle \pi_{x} \circ i\)} (d);
\end{tikzpicture}\\

From the commutative diagram $\pi_x\circ i : \ker((1_{\hat{\mathcal{H}}}-\pi_{x})\circ(1_{\hat{\mathcal{H}}}-\pi_{y})) \stackrel{\cong}{\longrightarrow} (\hat{\mathcal{H}}, \hat{A}_x)_{<\mu}$ is an isomorphism.

By definition, \((\hat{\mathcal{H}}, \hat{A}_y)_{<\mu} = \ker(1_{\hat{\mathcal{H}}} - \pi_y) \subset \ker((1_{\hat{\mathcal{H}}} - \pi_x) \circ (1_{\hat{\mathcal{H}}} - \pi_y))\) and \(\dim((\hat{\mathcal{H}}, \hat{A}_x)_{<\mu}) = \dim((\hat{\mathcal{H}}, \hat{A}_y)_{<\mu}) < \infty\) hold. Therefore, \(\ker((1_{\hat{\mathcal{H}}} - \pi_x) \circ (1_{\hat{\mathcal{H}}} - \pi_y)) = \ker(1_{\hat{\mathcal{H}}} - \pi_y)\) holds.
                                                         
Hence, $\overline{\pi_{x}}:(\hat{\mathcal{H}}, \hat{A})_{<\mu}|_{V}\to V\times(\hat{\mathcal{H}}, \hat{A}_x)_{<\mu}$ is a local trivialization.
   \end{proof}

\begin{dfn}
By Proposition~\ref{prop:apporoximation vector bundle}, we define a finite-dimensional approximation
$\hat{A}_{<\mu} : (\hat{\mathcal{H}}, \hat{A})_{<\mu} \to (\hat{\mathcal{H}}, \hat{A})_{<\mu}$
of $\hat{A}$ by setting \(\hat{A}_{<\mu}(x) = \hat{A}(x)\). In an ungraded case, we similarly define \((\mathcal{H}, A)_{<\mu}\) and $A_{<\mu} : (\mathcal{H}, A)_{<\mu} \to (\mathcal{H}, A)_{<\mu}$.
\end{dfn}


\subsection{Sign coordinate}
\label{sub:Sign coorinate}

\begin{dfn}\label{dfn:even sign coordinates}
Let $k$ be a non-negative integer. Let $X$ be a $2k$-dimensional compact, oriented, differentiable manifold, and let $\hat{A} \colon X \to \mathcal{F}_0(\mathcal{\hat{H}})$ be a continuous map. We fix $\mu > 0$ and an open set $U \subset X$, as described in Proposition \ref{prop:mu exist}, such that $U$ contains $F$ and the space $(\mathcal{\hat{H}}, \hat{A}_x)_{<\mu}$ is defined for all $x \in U$. We assume that $\dim(\mathcal{\hat{H}}, \hat{A}_x)_{<\mu} = 2^k$ for all $x \in F$. We also fix the \emph{standard} $\mathbb{Z}_2$-graded irreducible $Cl_{2k}$ representation $\hat S^{(2k)}$ (and its generators) as in Convention~\ref{conv:std-cliff}, and we identify $(\mathcal{\hat{H}}, \hat{A}_x)_{<\mu}$ with $\hat S^{(2k)}$ as a graded $Cl_{2k}$ representation.
\begin{enumerate}
\item We say that a coordinate $(V;a_1,...,a_{2k})$ $(x\in V \subset U)$ of $X$ is an (even) sign coordinate of $x$ when $\hat{A}_{<\mu}|_V \simeq \sum_{i=1}^{2k} a_i\,\hat{\gamma}^{(2k)}_i:(\mathcal{\hat{H}}, \hat{A})_{<\mu}|_V\to (\mathcal{\hat{H}}, \hat{A})_{<\mu}|_V.$\\
Here, $\simeq$ means \emph{fiberwise homotopic relative to $V\setminus\{x\}$} (see Remark~\ref{rem:fiberwise-homotopy}).
\item When a sign coordinate $(V;a_{1},...,a_{2k})$ $(V\subset U)$ can be taken in $x \in F$, define the following sign.
\begin{align*}
\operatorname{sign}(x)=
\frac{\det\left(\frac{\partial(a_{1},\dots,a_{2k})}{\partial (x_{1},\dots,x_{2k})}\right)}
{\left|\det\left(\frac{\partial(a_{1},\dots,a_{2k})}{\partial (x_{1},\dots,x_{2k})}\right)\right|}
\in \{-1,1\}.
\end{align*}
Here $(V;x_{1},...,x_{2k})$ is an oriented coordinate of $X$, and $\frac{\partial(a_{1},...,a_{2k})}{\partial (x_{1},...,x_{2k})}$ is the Jacobian.
\end{enumerate}
\end{dfn}

\begin{dfn}\label{dfn:odd sign coordinates}
Let $k$ be a non-negative integer. Let $X$ be a ($2k+1$)-dimensional compact, oriented, differentiable manifold, and let $A\colon X\to Fred_{sa}(\mathcal{H})$ be a continuous map. We fix $\mu > 0$ and an open set $U \subset X$, as described in Proposition \ref{prop:mu exist}, such that $U$ contains $F$ and the space $(\mathcal{H}, A_x)_{<\mu}$ is defined for all $x \in U$. We assume that $\dim(\mathcal{H}, A_x)_{<\mu} = 2^k$ for all $x \in F$. We also fix the \emph{standard} ungraded irreducible $Cl_{2k+1}$ representation $S^{(2k+1)}$ (and its generators) as defined in Convention~\ref{conv:std-cliff}, and we identify $(\mathcal{H}, A_x)_{<\mu}$ with $S^{(2k+1)}$ as an ungraded $Cl_{2k+1}$ representation.
\begin{enumerate}
\item We say that a coordinate $(V;a_1,...,a_{2k+1})$ $(x\in V \subset U)$ of $X$ is an (odd) sign coordinate of $x$ when $A_{<\mu}|_V \simeq \sum_{i=1}^{2k+1} a_i\,\gamma_i:(\mathcal{H},A)_{<\mu}|_V\to (\mathcal{H},A)_{<\mu}|_V.$\\
Here, $\simeq$ means \emph{fiberwise homotopic relative to $V\setminus\{x\}$} (see Remark~\ref{rem:fiberwise-homotopy}).
\item When a sign coordinate $(V;a_{1},...,a_{2k+1})$ $(V\subset U)$ can be taken in $x \in F$, define the following sign.
\begin{align*}
\operatorname{sign}(x)=
-\,
\frac{\det\left(\frac{\partial(a_{1},\dots,a_{2k+1})}{\partial (x_{1},\dots,x_{2k+1})}\right)}
{\left|\det\left(\frac{\partial(a_{1},\dots,a_{2k+1})}{\partial (x_{1},\dots,x_{2k+1})}\right)\right|}
\in \{-1,1\}.
\end{align*}
Here $(V;x_{1},...,x_{2k+1})$ is an oriented coordinate of $X$, and $\frac{\partial(a_{1},...,a_{2k+1})}{\partial (x_{1},...,x_{2k+1})}$ is the Jacobian.
\end{enumerate}
\end{dfn}

\begin{rem}\label{rem:fiberwise-homotopy}
In Definitions~\ref{dfn:even sign coordinates} and~\ref{dfn:odd sign coordinates}, the symbol ``$\simeq$'' means a \emph{fiberwise homotopy with a relative invertibility condition}.
More precisely, after choosing a local trivialization
\[
(\mathcal{\hat{H}}, \hat{A})_{<\mu}\big|_V \cong V \times \C^{m}
\quad\text{(resp.\ }(\mathcal{H}, A)_{<\mu}\big|_V \cong V \times \C^{m}\text{)},
\]
we regard $\hat{A}_{<\mu}\big|_V$ (resp.\ $A_{<\mu}\big|_V$) as a continuous map $V \to \mathrm{End}(\C^{m})$.
We say that $\hat{A}_{<\mu}\big|_V$ (resp.\ $A_{<\mu}\big|_V$) is \emph{fiberwise homotopic relative to $V\setminus\{x\}$}
to the model map $\sum a_i \gamma_i$ if there exists a continuous homotopy
\[
H: V \times [0,1] \longrightarrow \mathrm{End}(\C^{m})
\]
such that
\[
H(\cdot,0)=A_{<\mu}\big|_V,\qquad
H(\cdot,1)=\sum a_i \gamma_i,
\]
and moreover
\[
H(v,t)\in \mathrm{GL}(\C^{m})\quad \text{for all } v\in V\setminus\{x\}\text{ and } t\in[0,1].
\]
Equivalently, $H$ is a homotopy of maps of pairs
\[
(V,\,V\setminus\{x\}) \to (\mathrm{End}(\C^{m}),\,\mathrm{GL}(\C^{m})).
\]
\end{rem}

\begin{rem}
In even degrees, the Jacobian sign in Definition~\ref{dfn:even sign coordinates} matches the standard generator of
$K^{0}(D^{2k},\partial D^{2k})$. In odd degrees, we include the extra minus sign in
Definition~\ref{dfn:odd sign coordinates}(2) so that the local generator used in the push-forward normalization
agrees with the sign convention in the statement of the odd main theorem.
\end{rem}

\begin{rem}[Effect of changing the irreducible Clifford  representation]\label{rem:choice-irr- representation}
In odd degrees there are two inequivalent ungraded irreducible $Cl_{2k+1}$ representations
(e.g.\ $Cl_1$: $\gamma_1=\pm 1$; see Remark~\ref{rem:std-ungraded-Cl1-Cl3}).
Replacing the chosen representation (or equivalently conjugating the generators by an orientation-reversing
change) may change the identified generator of
$K^{-1}(D^{2k+1},\partial D^{2k+1})\cong\mathbb Z$ by an overall sign.
Throughout this paper we fix the \emph{standard} choice by Convention~\ref{conv:std-cliff}
(and hence the normalization of $K(D^m,\partial D^m)\cong\mathbb Z$),
and we fix the odd generator by Convention~\ref{conv:odd-generator}.
With these conventions, the sign of the local invariant is determined only by the orientation
of sign coordinates (the Jacobian sign) in Definitions~\ref{dfn:even sign coordinates} and~\ref{dfn:odd sign coordinates}.
\end{rem}

\begin{dfn}
An element $x \in F$ of the Fermi set is called a Fermi point if a sign coordinate can be defined for it.
\end{dfn}
                                                         
Let $U_1$ and $U_2$ be open neighborhoods of the origin in $\mathbb{R}^n$. Consider a diffeomorphism $f\colon U_1 \to U_2$ satisfying $f(0)=0$ and $\det{T}>0$, where $T=Jf(0)$ is the Jacobian matrix of $f$ at the origin. The image of $T$, denoted $Im(T)$, is the image of the linear map $T$. We define $f_1\colon Im(T) \to U_2$ by $f_1=f \circ T^{-1}$, so that $f=f_1 \circ T$. Here, $Jf$ and $Jf_1$ denote the Jacobian matrix of $f$ and $f_1$, respectively.
\begin{lem}\label{lem:decomposition-of-f}
$Jf_1(0)=E_n$, where $E_n$ is the $n\times n$ identity matrix.
\end{lem}

\begin{proof}
This is clear from the definition of $f_1$ and the differentiation of composite and inverse functions.
\end{proof}

\begin{lem}\label{lem:4.9}
\begin{enumerate}
\item Let $U_1,U_2\subset \mathbb{R}^n$ be open neighborhoods of 0, $f:U_1 \to U_2$ a smooth map such that $f(0)=0$ and $Jf(0)=E_n$.
Then $f|_{D(r)}:D(r)\to D(r^{\prime})$ is a diffeomorphism and there exists a positive real numbers $r, r^{\prime}>0\,(r^{\prime}\ge r>0) $ such that $f|_{D(r)}\simeq i$.
Here, $D(r)$ is an open ball of radius $r$ centered at the origin, and $i:D(r)\to D(r^{\prime})$
is the inclusion map.
\item Let \( T \in GL^{+}(n;\mathbb{R}) \) be a linear transformation. 
Then for any \( r > 0 \), there exists \(0< r'' < r \) such that 
\[
\operatorname{Im}(T|_{D(r'')}) \subset D(r),
\]
and the map \( T \colon D(r'') \to D(r) \) is homotopic to the inclusion \( i \colon D(r'') \hookrightarrow D(r) \); that is, \( T \simeq i \).
\end{enumerate}
\end{lem}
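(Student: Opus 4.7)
The plan for part (1) is to invoke the inverse function theorem and then use a straight-line homotopy. Since $Jf(0) = E_n$ is invertible, the inverse function theorem yields an open neighborhood $W \subset U_1$ of $0$ on which $f$ restricts to a diffeomorphism onto $f(W)$. Choose $r > 0$ so small that $\overline{D(r)} \subset W$; then $f|_{D(r)}$ is a diffeomorphism onto its image. Because $\overline{D(r)}$ is compact, $M_f := \max_{x \in \overline{D(r)}} |f(x)|$ is finite, so choosing any $r' > \max(r, M_f)$ gives both a well-defined inclusion $i \colon D(r) \hookrightarrow D(r')$ and the containment $f(D(r)) \subset D(r')$. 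The linear interpolation $H(x,t) = (1-t)f(x) + tx$ then maps $D(r) \times [0,1]$ into $D(r')$ by the triangle inequality $|H(x,t)| \leq (1-t)|f(x)| + t|x| \leq \max(M_f, r) < r'$, and satisfies $H(\,\cdot\,, 0) = f|_{D(r)}$, $H(\,\cdot\,, 1) = i$, establishing $f|_{D(r)} \simeq i$.

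The plan for part (2) is to combine the path-connectedness of $SL(n;\mathbb{R})$ with a norm bound to control where the homotopy lands. Fix a continuous path $\{T_s\}_{s \in [0,1]}$ in $SL(n;\mathbb{R})$ with $T_0 = T$ and $T_1 = E_n$; such a path exists because $SL(n;\mathbb{R})$ is path-connected, which one can see by decomposing any element into elementary shear matrices, each manifestly connectible to the identity through a one-parameter family. The continuous function $s \mapsto \|T_s\|$ on the compact interval $[0,1]$ attains a finite maximum $M := \sup_{s \in [0,1]} \|T_s\|$. Pick any $r''$ with $0 < r'' < r/\max(1, M)$, so that simultaneously $r'' < r$ and $M r'' < r$. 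Then $H(x,s) := T_s(x)$ maps $D(r'') \times [0,1]$ into $D(r)$ since $|T_s x| \leq M|x| < M r'' \leq r$, and it interpolates between $T|_{D(r'')}$ and the inclusion $i$, as required.

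The steps are essentially routine norm estimates combined with standard topology. The only nontrivial ingredient is the path-connectedness of $SL(n;\mathbb{R})$ in part (2), which is classical, so I do not anticipate any serious obstacle. The one point requiring slight care is the interpretation of ``diffeomorphism'' in the statement of (1): since $f|_{D(r)}$ need not surject onto $D(r')$, this should be read as \emph{diffeomorphism onto its image inside $D(r')$}, which is exactly what the inverse function theorem supplies.
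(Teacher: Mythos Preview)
Your proof is correct. For part~(2) it is essentially identical to the paper's argument (path-connectedness of $SL(n;\mathbb{R})$), only with the norm bound spelled out explicitly.

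For part~(1) you take a genuinely simpler route than the paper. The paper applies the inverse function theorem not to $f$ but to the map $F(x,t)=\big((1-t)f(x)+tx,\,t\big)$ on $U_1\times[0,1]$, using compactness of $[0,1]$ to obtain a single radius $r$ on which \emph{every} intermediate map $f_t$ is a diffeomorphism onto its image; the existence of $r'$ then comes from compactness of $F(D(r)\times[0,1])$. You instead apply the inverse function theorem once, directly to $f$, and handle the homotopy by a straightforward triangle-inequality estimate $|(1-t)f(x)+tx|\le \max(M_f,r)<r'$. Your approach is shorter and suffices for the lemma as stated; the paper's approach yields the extra information that the entire straight-line homotopy consists of diffeomorphisms, which is not actually used downstream. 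Your remark that ``diffeomorphism'' must be read as ``diffeomorphism onto its image in $D(r')$'' is on point and applies equally to the paper's proof.
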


\begin{proof}
\begin{enumerate}
\item
For any real number $t \in \mathbb{R}$, we define a homotopy $f_{t}\colon U_1 \to \mathbb{R}^n$
by the formula
\begin{align*}
f_t(x)=tx+(1-t)f(x).
\end{align*} 
This definition gives $f_0=f$ and $f_1=\mathrm{id}_{U_1}$, and in particular $f_t(0)=0$ for all $t\in[0,1]$.  

Next, we define a map $F\colon U_1\times [0,1] \to \mathbb{R}^n \times [0,1]$ by $F(x,t)=(f_t(x),t)$.
By definition, the image of the point $(0,t)$ under the map $F$ is $F(0,t)=(0,t)$.
Furthermore, the Jacobian matrix of $F$ at $(0,t)$ has the form  
\begin{align*}
J F(0,t)= 
\begin{pmatrix}
E_n & * \\
0 & 1
\end{pmatrix},
\end{align*}
where the asterisk represents some entries whose explicit form is irrelevant in this context.  
Since the Jacobian matrix is non-singular, the inverse function theorem guarantees the existence of positive real numbers $r_t>0$ and $\delta_t>0$ such that the restricted map 
\begin{align*}
F|_{D(r_t) \times [t-\delta_t, t+\delta_t]}\colon D(r_t) \times [t-\delta_t, t+\delta_t] \to \mathbb{R}^n \times [0,1]
\end{align*}
is a diffeomorphism onto its image.
Due to the compactness of $[0,1]$, there exists a monotonically increasing sequence 
$t_1,...,t_n\in [0,1]$ such that $\bigcup_{i=1}^{n}[t_i-\delta_{t_i},t_i+\delta_{t_i}]=[0,1]$.
If we set $r = \min \{r_{t_1}, \dots, r_{t_n}\}$, then $F|_{D(r) \times [0,1]} : D(r) \times [0,1] \to \mathbb{R}^n \times [0,1]$ is a diffeomorphism onto its image.
The subspace $F(D(r)\times [0,1])\subset \mathbb{R}^n\times [0,1]$
is compact, so there exists a positive real number $r^{\prime}>0$ such that $F(D(r)\times [0,1])\subset D(r^{\prime})\times [0,1]$.
($0<r\leq r^{\prime}$ since $F(D(r)\times \{1\})=D(r)\times \{1\} \subset D(r^{\prime})\times \{1\}$.)
Therefore, $F$ is a homotopy that gives $f|_{D(r)}\simeq i$.
\item Since \(GL^{+}(n;\mathbb{R})\) is path-connected, the path connecting \(T\) and \(E_n\) gives the homotopy.
\end{enumerate}
\end{proof}

Given a $\mathbb{Z}_2$-graded representation $\hat{S}$ of $Cl_{2k}$, let $\hat{\mu}_{\hat{S}}\colon D^{2k}\to \mathrm{End}(\hat{S})$ be defined by
\begin{align*}
\hat{\mu}_{\hat{S}}(x_1,\dots,x_{2k})=\sum_{i=1}^{2k} x_i\,\hat{\gamma}_i.
\end{align*}
The orthogonal complement of the embedding $\hat{S} \subset \mathcal{\hat{H}}_{2k}$ as a graded representation is denoted as $\hat{S}^{\perp}$, and the action of $Cl_{2k+1}$, which is an extension of the action of $Cl_{2k}$, on $\hat{S}^{\perp} \cong \mathcal{\hat{H}}_{2k}$ is denoted as $\hat{\gamma}^{\perp}$.

\begin{convention}\label{conv:local-model-standard-choice}
In Theorems~\ref{thm:4.10} and~\ref{thm:4.11}, the subspace $\hat{S}_{+}$ is taken to be the standard representation fixed in Convention~\ref{conv:std-cliff} (with the corresponding standard Clifford generators).
\end{convention}

We now record a local model criterion showing that the standard local form represents the generator
$1\in\mathbb Z$ under the normalization fixed in \S\ref{subsec:Description of the generator} and Convention~\ref{conv:std-cliff}.
This will be used implicitly in the proof of the main theorems via excision and the push-forward.

\begin{thm}\label{thm:4.10}
Suppose that a continuous mapping 
\begin{align*}
\hat{A} \colon(D^{2k},\partial D^{2k})\to (\mathcal{F}_{0}(\mathcal{\hat{H}}),\mathcal{F}^{*}_{0}(\mathcal{\hat{H}}))
\end{align*}
satisfies the following assumptions:
\begin{enumerate}
\item The Hilbert space \( \hat{\mathcal{H}} \) admits a direct sum decomposition 
\( \hat{\mathcal{H}} = \hat{S}_{+} \oplus \hat{S}_{+}^{\perp} \), and accordingly, 
the operator \( \hat{A}(x) \) can be decomposed as 
\[
\hat{A}(x) = \hat{a}(x) + \hat{a}^{\perp}(x),
\]
where \( \hat{a}(x) \colon \hat{S}_{+} \to \hat{S}_{+} \subset \hat{\mathcal{H}} \) 
and \( \hat{a}^{\perp}(x) \colon \hat{S}_{+}^{\perp} \to \hat{\mathcal{H}} \).\\
\item For any $x \in D^{2k}$, we have $\hat{a}^{\perp}(x)\in \mathcal{F}^{*}_{0}(\mathcal{\hat{\mathcal{H}}})$.\\
\item There exists a smooth mapping $f=(f_1,...,f_{2k})\colon D^{2k}\to \mathbb{R}^{2k}$ such that:\\
\begin{itemize}
    \item  For any $x \in D^{2k}$, the map $\hat{a}$ admits the expression $\hat{a}(x)=\sum_{i=1}^{2k}f_i(x)\hat{\gamma}_i$.
    \item $f^{-1}(\{0\})=\{0\}$.
    \item $\det Jf(0)>0$.
\end{itemize}
\end{enumerate}
In this case, under an isomorphism $K^{0}(D^{2k},\partial D^{2k})\cong \mathbb{Z}$, the map $\hat{A}$ represents the generator $1 \in \mathbb{Z}$. Here the isomorphism $K(D^{m},\partial D^{m})\cong \mathbb{Z}$ is normalized by the generator described in Section~\ref{subsec:Description of the generator}, together with Convention~\ref{conv:std-cliff}; namely, the standard local model built from the standard Clifford representation represents $1\in\mathbb{Z}$.
\end{thm}
                                                   
\begin{proof}
We prove that $[\hat A]\in K^{0}(D^{2k},\partial D^{2k})$ represents the generator
$1\in\mathbb Z$ under the fixed identification $K^{0}(D^{2k},\partial D^{2k})\cong\mathbb Z$
given in \S\ref{subsec:Description of the generator} together with Convention~\ref{conv:std-cliff},
and with the choice of $\hat S_{+}$ fixed in Convention~\ref{conv:local-model-standard-choice}.

\smallskip
\noindent\textbf{Step 1: Reduction to a small ball.}
Since $f^{-1}(\{0\})=\{0\}$, the operator $\hat A(x)$ is invertible on
$D^{2k}\setminus \mathrm{int}(D(r'))$ for $r'>0$ sufficiently small.
Hence $\hat A$ defines a relative class
\[
[\hat A]\in K^{0}\bigl(D^{2k},\,D^{2k}\setminus \mathrm{int}(D(r'))\bigr).
\]
By excision (and the canonical identification of pairs)
\[
\bigl(D^{2k},\,D^{2k}\setminus \mathrm{int}(D(r'))\bigr)\cong \bigl(D(r'),\,\partial D(r')\bigr),
\]
we obtain an isomorphism
\[
K^{0}\bigl(D^{2k},\,D^{2k}\setminus \mathrm{int}(D(r'))\bigr)\cong K^{0}\bigl(D(r'),\partial D(r')\bigr).
\]
Thus it suffices to determine the class $[\hat A|_{D(r')}]$ in $K^{0}(D(r'),\partial D(r'))$.

\smallskip
\noindent\textbf{Step 2: The case $Jf(0)=E_{2k}$.}
Assume first $Jf(0)=E_{2k}$. Then Lemma~\ref{lem:4.9}(1) implies that, after shrinking the radius, there exist positive real numbers $r,r'>0$ with $r'\ge r$ such that
\[
f|_{D(r)}:D(r)\to D(r')
\]
is homotopic, as a map of pairs, to the inclusion
\[
i:D(r)\hookrightarrow D(r').
\]
By homotopy invariance of $K^{0}$, the class $[\hat A|_{D(r)}]$ is equal to the class of the
standard local model
\[
\Bigl[\hat\mu_{\hat S_{+}}+\hat\gamma^{\perp}\Bigr]\in K^{0}(D(r),\partial D(r)).
\]
Under the fixed normalization in \S\ref{subsec:Description of the generator}
(Convention~\ref{conv:std-cliff}), the class
\[
\bigl[\hat\mu_{\hat S_{+}}+\hat\gamma^{\perp}\bigr]
\]
corresponds to $1\in\mathbb Z$.
Therefore $[\hat A|_{D(r)}]$ also corresponds to $1\in\mathbb Z$.

\smallskip
\noindent\textbf{Step 3: The general case $\det Jf(0)>0$.}
For general $f$ with $\det Jf(0)>0$, let
\[
T:=Jf(0)\in GL^{+}(2k;\mathbb R),
\qquad
f_{1}:=f\circ T^{-1}.
\]
Then
\[
f=f_{1}\circ T,
\]
and Lemma~\ref{lem:decomposition-of-f} gives
\[
Jf_{1}(0)=E_{2k}.
\]
By Lemma~\ref{lem:4.9}(1) applied to $f_{1}$ and Lemma~\ref{lem:4.9}(2) applied to $T$,
after shrinking radii, both $f_{1}$ and $T$ are homotopic to the corresponding inclusions of balls (as maps of pairs).
Hence $f=f_{1}\circ T$ is also homotopic to the inclusion, and by Step~2 the class
$[\hat A|_{D(r')}]$ corresponds to $1\in\mathbb Z$.

Consequently $[\hat A]\in K^{0}(D^{2k},\partial D^{2k})$ corresponds to $1\in\mathbb Z$.
\end{proof}

Let $\hat{S}_{+}=\hat{S}^{(2k+2)}$ be the standard $\mathbb{Z}_2$-graded irreducible $Cl_{2k+2}$ representation fixed in Convention~\ref{conv:std-cliff}, and let $\hat{S}_{-}$ be the inequivalent graded irreducible $Cl_{2k+2}$ representation.
(If necessary, the $\mathbb{Z}_2$-graded irreducible representations of $Cl_{2k+1}$ obtained by ignoring the action of $\gamma_{2k+2}\in Cl_{2k+2}$ are assumed to be identical.)
Given a $\mathbb{Z}_2$-graded representation $\hat{S}$ of $Cl_{2k+2}$, let\,$\hat{\mu}_{\hat{S}}\colon D^{2k+1}\to \mathrm{End}(\hat{S})$ be defined as 
 \begin{align*} 
\hat{\mu}_{\hat{S}}(x_1,...,x_{2k+1})=\sum_{i=1}^{2k+1}x_{i}\hat{\gamma}_{i}.
\end{align*} 
The orthogonal complement of the embedding $\hat{S} \subset \mathcal{\hat{H}}_{2k+2}$ as a graded representation is denoted as $\hat{S}^{\perp}$, and the action of $Cl_{2k+3}$, which is an extension of the action of $Cl_{2k+2}$, on $\hat{S}^{\perp} \cong \mathcal{\hat{H}}_{2k+2}$ is denoted as $\hat{\gamma}^{\perp}$.

We also state the odd-dimensional analogue (the proof is parallel to the even case), which will be used
to identify the local generator in $K^{-1}(D^{2k+1},\partial D^{2k+1})\cong\mathbb Z$ consistently with
Convention~\ref{conv:odd-generator}.
                                                   
\begin{thm}\label{thm:4.11}
Suppose that a continuous mapping 
\begin{align*}
\hat{A}\colon(D^{2k+1},\partial D^{2k+1})\to (\mathcal{F}_{1}(\mathcal{\hat{H}}),\mathcal{F}^{*}_{1}(\mathcal{\hat{H}}))
 \end{align*}
 satisfies the following assumptions:
\begin{enumerate}
\item The Hilbert space \( \hat{\mathcal{H}} \) admits an orthogonal decomposition 
\( \hat{\mathcal{H}} = \hat{S}_{+} \oplus \hat{S}_{+}^{\perp} \), and accordingly, 
the operator \( \hat{A}(x) \) can be decomposed as a sum 
\[
\hat{A}(x) = \hat{a}(x) + \hat{a}^{\perp}(x),
\]
where \( \hat{a}(x) \colon \hat{S}_{+} \to \hat{S}_{+} \subset \hat{\mathcal{H}} \) 
and \( \hat{a}^{\perp}(x) \colon \hat{S}_{+}^{\perp} \to \hat{\mathcal{H}} \).\\
\item For any $x \in D^{2k+1}$, we have $\hat{a}^{\perp}(x)\in \mathcal{F}^{*}_{1}(\mathcal{\mathcal{H}})$.\\
\item There exists a smooth mapping $f=(f_1,...,f_{2k+1})\colon D^{2k+1}\to \mathbb{R}^{2k+1}$ such that:
\begin{itemize}
    \item  For any $x \in D^{2k+1}$, the map $\hat{a}$ admits the expression $\hat{a}(x)=\sum_{i=1}^{2k+1}f_i(x)\hat{\gamma}_i$.
    \item $f^{-1}(\{0\})=\{0\}$.
    \item $\det Jf(0)>0$.
\end{itemize}
\end{enumerate}
In this case, under an isomorphism $K^{-1}(D^{2k+1},\partial D^{2k+1})\cong \mathbb{Z}$, the map $\hat{A}$ represents the generator $1 \in \mathbb{Z}$. Here the isomorphism $K(D^{m},\partial D^{m})\cong \mathbb{Z}$ is normalized by the generator described in Section~\ref{subsec:Description of the generator}, together with Convention~\ref{conv:std-cliff}; namely, the standard local model built from the standard Clifford representation represents $1\in\mathbb{Z}$.
\end{thm}

\begin{proof}
The proof proceeds similarly in the even-dimensional case.
\end{proof}                                                 

\begin{rem}\label{rem:3}
Consider the $\mathbb{Z}_2$-graded representation $\hat{S}$ of the Clifford algebra $Cl_{2k+1}$, defined as $\hat{S} = S^0 \oplus S^1$, where $S^0 = S^1$. Under this representation, we express the following matrices:
\begin{align*}
\hat{\epsilon}&=\begin{pmatrix}
1_S & 0 \\
0 & -1_S
\end{pmatrix},&
\hat{\gamma}_i&=\begin{pmatrix}
0 & \gamma_i \\
\gamma_i & 0
\end{pmatrix},&
\hat{\gamma}_{2k+2}&=\begin{pmatrix}
0 & -\sqrt{-1}\cdot1_S \\
\sqrt{-1}\cdot1_S & 0
\end{pmatrix}.
\end{align*}
where $i$ ranges from 1 to $2k+1$. Here, $\gamma_1, \dots, \gamma_{2k+1}$ are self-adjoint homomorphisms on $S$, which collectively provide an ungraded representation of $Cl_{2k+1}$.

In this framework, we define the operators $\hat{a}(x)$ and $\hat{\mu}(x)$ as follows:
\begin{align*}
\hat{a}(x)&=\begin{pmatrix}
0 & \sum_{i=1}^{2k+1}f_i(x)\gamma_i \\
\sum_{i=1}^{2k+1}f_i(x)\gamma_i & 0
\end{pmatrix},&
\hat{\mu}(x)&=\begin{pmatrix}
0 & \sum_{i=1}^{2k+1}x_i\gamma_i \\
\sum_{i=1}^{2k+1}x_i\gamma_i & 0
\end{pmatrix}.
\end{align*}
From these definitions, it follows that a statement concerning $\hat{a}(x)$ can be equivalently reformulated as a statement about $a(x)$.
\end{rem}

We define $Fred_{sa}^{1,*}(\mathcal{H}) \subset Fred_{sa}^{1}(\mathcal{H})$ as the subspace of invertible operators.

Let $S_{+}$ be the standard ungraded irreducible representation of $Cl_{2k+1}$ fixed in Convention~\ref{conv:std-cliff}, and let $S_{-}$ be the inequivalent one. Furthermore, consider a separable infinite-dimensional Hilbert space $\mathcal{H}$, and take the direct sum $S_{+}\oplus \mathcal{H}$ with this fixed standard choice of $S_{+}$. In this case, the following corollary is obtained.

\begin{cor}
Consider a continuous map
\begin{align*}
A \colon (D^{2k+1}, \partial D^{2k+1}) \to (Fred_{sa}^{1}(S_{+} \oplus \mathcal{H}), Fred_{sa}^{1,*}(S_{+} \oplus \mathcal{H})).
\end{align*}
We denote the \( S_{+} \)-component and the \( \mathcal{H} \)-component of \( A \) 
by \( a \) and \( a^{\perp} \), respectively. That is, we write
\[
A(x) = a(x) + a^{\perp}(x),
\]
where \( a(x) \colon S_{+} \to S_{+} \subset \mathcal{H} \) and 
\( a^{\perp}(x) \colon S_{+}^{\perp} \to \mathcal{H} \). We assume that $A$ satisfies the following conditions.
\begin{enumerate}
\item For any $x \in D^{2k+1}$, we have $a^{\perp}(x)\in Fred_{sa}^{1,*}(S_{+}\oplus \mathcal{H})$.\\
\item There exists a smooth mapping $f=(f_1,...,f_{2k+1})\colon D^{2k+1}\to \mathbb{R}^{2k+1}$ such that:
\begin{itemize}
    \item  For any $x \in D^{2k+1}$, the map a admits the expression $a(x)=\sum_{i=1}^{2k+1}f_i(x)\gamma_i$.
    \item $f^{-1}(\{0\})=\{0\}$.
    \item $\det Jf(0)>0$.
\end{itemize}
\end{enumerate}
In this case, under an isomorphism $K^{-1}(D^{2k+1},\partial D^{2k+1})\cong \mathbb{Z}$, the map $A$ represents the generator $1 \in \mathbb{Z}$. Here the isomorphism $K(D^{m},\partial D^{m})\cong \mathbb{Z}$ is normalized by the generator described in Section~\ref{subsec:Description of the generator}, together with Convention~\ref{conv:std-cliff}; namely, the standard local model built from the standard Clifford representation represents $1\in\mathbb{Z}$.
\end{cor}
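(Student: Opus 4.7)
The plan is to reduce the corollary to Theorem \ref{thm:4.11} by passing from the ungraded setting to the $\mathbb{Z}_2$-graded setting via the correspondence recorded in Remark \ref{rem:3}. Since $\mathcal{F}_1$ was identified in Section \ref{subsec:Definition of $K$-theory} with $Fred_{sa}^1$ by sending $\hat{B} = \left(\begin{smallmatrix} 0 & B \\ B & 0\end{smallmatrix}\right)$ to its off-diagonal block $B$, the class $[A] \in K^{-1}(D^{2k+1}, \partial D^{2k+1})$ represented by the ungraded map $A$ is, tautologically, the class $[\hat{A}]$ represented by
\begin{align*}
\hat{A}(x) = \begin{pmatrix} 0 & A(x) \\ A(x) & 0 \end{pmatrix} \colon \hat{\mathcal{H}} \to \hat{\mathcal{H}},
\end{align*}
with $\hat{\mathcal{H}} = (S_{+} \oplus \mathcal{H}) \oplus (S_{+} \oplus \mathcal{H})$ carrying the $\mathbb{Z}_2$-grading $\left(\begin{smallmatrix} 1 & 0 \\ 0 & -1\end{smallmatrix}\right)$ and $\hat{\gamma}_{2k+2} = \left(\begin{smallmatrix} 0 & -\sqrt{-1} \\ \sqrt{-1} & 0\end{smallmatrix}\right)$. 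Consequently the corollary will be proved once the hypotheses of Theorem \ref{thm:4.11} are verified for $\hat{A}$.

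The next step is to promote the ungraded irreducible representation $S_{+}$ of $Cl_{2k+1}$ to the $\mathbb{Z}_2$-graded irreducible representation $\hat{S}_{+} = S_{+} \oplus S_{+}$ of $Cl_{2k+2}$ described in Remark \ref{rem:3}, so that $\hat{S}_{+} \subset \hat{\mathcal{H}}$ is an invariant graded subspace under the $Cl_{2k+2}$-action and admits an orthogonal complement $\hat{S}_{+}^{\perp}$. The decomposition $A = a + a^{\perp}$ then lifts blockwise to $\hat{A} = \hat{a} + \hat{a}^{\perp}$ with $\hat{a}(x) \colon \hat{S}_{+} \to \hat{S}_{+}$ and $\hat{a}^{\perp}(x) \colon \hat{S}_{+}^{\perp} \to \hat{S}_{+}^{\perp}$; the invertibility of $a^{\perp}(x)$ transports directly to the invertibility of $\hat{a}^{\perp}(x) \in \mathcal{F}_1^{*}$. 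Using the matrix formula $\hat{\gamma}_i = \left(\begin{smallmatrix} 0 & \gamma_i \\ \gamma_i & 0\end{smallmatrix}\right)$ from Remark \ref{rem:3}, the identity $a(x) = \sum_{i=1}^{2k+1} f_i(x) \gamma_i$ becomes $\hat{a}(x) = \sum_{i=1}^{2k+1} f_i(x) \hat{\gamma}_i$, and the conditions $f^{-1}(\{0\}) = \{0\}$ and $\det Jf(0) > 0$ are unchanged.

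With these verifications in place, Theorem \ref{thm:4.11} applies and declares $[\hat{A}]$ to be the generator of $K^{-1}(D^{2k+1}, \partial D^{2k+1}) \cong \mathbb{Z}$. Transporting the conclusion back through the natural identification from the first step gives the same statement for $[A]$, which is precisely the assertion of the corollary.

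The only non-bookkeeping item is checking that the correspondence $A \leftrightarrow \hat{A}$ is the identification used to define $\mathcal{F}_1$ in Definition \ref{dfn:Space of Fredholm operators} and hence preserves the $K$-theoretic data. This is essentially the content of the $n=1$ discussion in Section \ref{subsec:Definition of $K$-theory} together with Remark \ref{rem:3}, so I do not expect any substantive obstacle; the proof is a translation rather than a fresh argument.
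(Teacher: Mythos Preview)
Your proposal is correct and follows exactly the approach the paper takes: the paper's proof is the single sentence ``The claim follows from the correspondence between $Fred_{sa}^{1}$ and $\mathcal{F}_1$, along with Theorem \ref{thm:4.11} and Remark \ref{rem:3},'' and you have simply unpacked that correspondence in detail. The translation you give---doubling $A$ to $\hat{A}$, promoting $S_+$ to $\hat{S}_+$ via Remark \ref{rem:3}, and reading off the hypotheses of Theorem \ref{thm:4.11}---is precisely what the paper intends.
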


\begin{proof}
The claim follows from the correspondence between $Fred_{sa}^{1}$ and $\mathcal{F}_1$, along with Theorem \ref{thm:4.11} and Remark \ref{rem:3}.
\end{proof}


\subsection{Proof of the main theorem}
\label{sub:Proof of the main theorem}

Let $X$ be a compact, oriented, differentiable manifold of dimension $2k+1$. Let \,$A\colon X\to Fred_{sa}^1(\mathcal{H})$ be a continuous map and $F$ the Fermi set of $A$. Let $(V, b = (b_1, \dots, b_{2k+1}))$ be an oriented coordinate of $X$, where $V$ is a coordinate neighborhood of $x \in F$ and $b: V \to \mathbb{R}^{2k+1}$ satisfies $b(x) = (0, \dots, 0)$. If necessary, by suitably reselecting $V$, we can assume that the image $b(V)$ under $b$ coincides with the disk $D^{2k+1}$.

\begin{convention}[Odd generator normalization]\label{conv:odd-generator}
Throughout the odd-dimensional case, we fix the isomorphism
\[
K^{-1}(D^{2k+1},\partial D^{2k+1})\cong \mathbb{Z}
\]
by declaring that the class
\[
\Big[\Big(-\sum_{i=1}^{2k+1} x_i \gamma_i\Big)+\gamma^{\perp}\Big]
\]
corresponds to \(1\in\mathbb{Z}\), where \((x_1,\dots,x_{2k+1})\) is the standard oriented coordinate on \(D^{2k+1}\).
This is a convention on the generator and does not modify the finite-dimensional approximation \(A_{<\mu}\).
\end{convention}

Through this $b$, we identify $b^*\colon K^{-1}(D^{2k+1},\partial D^{2k+1}) \cong K^{-1}(V, \partial V).$ Under this identification, the generator $1$ corresponds to 
\begin{align*}
b^*\Big[\Big(-\sum_{i=1}^{2k+1}x_i\gamma_i\Big) + \gamma^{\perp}\Big]
=
\Big[\Big(-\sum_{i=1}^{2k+1}b_i\gamma_i\Big) + \gamma^{\perp}\Big].
\end{align*}

\begin{lem}\label{lem:4.12}
Let $X$ be a compact, oriented, differentiable manifold of dimension $2k+1$. Let \,$A\colon X\to Fred_{sa}^1(\mathcal{H})$ be a continuous map and $F$ the Fermi set of $A$. Let $(V,a=(a_{1},...,a_{2k+1}))$ be a sign coordinate of $A$ at $x\in F$.
In this case, when $(V,b=(b_{1},...,b_{2k+1}))$ is an oriented coordinate,
\begin{align*}
\Big[\Big(-\sum_{i=1}^{2k+1}b_i\gamma_i\Big) + \gamma^{\perp}\Big]
=
\operatorname{sign}(x)\Big[\Big(-\sum_{i=1}^{2k+1}a_i\gamma_i\Big) + \gamma^{\perp}\Big].
\end{align*}
holds.
\end{lem}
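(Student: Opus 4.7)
The plan is to compare the two sides of the asserted equality in $K^{-1}(V,\partial V)\cong\mathbb{Z}$ by passing to the standard model $K^{-1}(D^{2k+1},\partial D^{2k+1})$ via the oriented chart $b$. After shrinking $V$ if necessary so that both $a$ and $b$ identify $V$ diffeomorphically with $D^{2k+1}$ (sending $x$ to $0$), the transition map $f=a\circ b^{-1}$ is a diffeomorphism with $f(0)=0$, $f^{-1}(0)=\{0\}$, and $\det Jf(0)\neq 0$ (the nonvanishing being implicit in the definition of a sign coordinate). The pullback $b^{*}$ identifies $[(\sum b_i\gamma_i)+\gamma^{\perp}]$ with the generator $[\sum x_i\gamma_i+\gamma^{\perp}]=1$ (by the corollary following Theorem \ref{thm:4.11}) and identifies $[(\sum a_i\gamma_i)+\gamma^{\perp}]$ with $[\sum f_i(x)\gamma_i+\gamma^{\perp}]$. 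The lemma thus reduces to showing $[\sum f_i\gamma_i+\gamma^{\perp}]=\operatorname{sign}(x)\in\mathbb{Z}$. The case $\operatorname{sign}(x)=+1$, i.e.\ $\det Jf(0)>0$, is immediate from the corollary to Theorem \ref{thm:4.11}.

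For the substantive case $\operatorname{sign}(x)=-1$, I would absorb the orientation reversal into a reflection of the domain. Let $r\colon D^{2k+1}\to D^{2k+1}$ be $r(x_1,\ldots,x_{2k+1})=(-x_1,x_2,\ldots,x_{2k+1})$, a diffeomorphism of pairs, and set $g=f\circ r$. Then $g(0)=0$, $g^{-1}(0)=\{0\}$, and $\det Jg(0)=-\det Jf(0)>0$, so the corollary to Theorem \ref{thm:4.11} gives $[\sum g_i\gamma_i+\gamma^{\perp}]=1$. By construction $\sum g_i(x)\gamma_i+\gamma^{\perp}=(\sum f_i\gamma_i+\gamma^{\perp})\circ r$, so this K-class equals $r^{*}[\sum f_i\gamma_i+\gamma^{\perp}]$. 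The argument then hinges on identifying $r^{*}$ as multiplication by $-1$ on $K^{-1}(D^{2k+1},\partial D^{2k+1})\cong\mathbb{Z}$.

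This last identification is the main obstacle, and I would handle it through the Chern character. By naturality, $\operatorname{Ch}_{\mathrm{odd}}\circ r^{*}=r^{*}\circ\operatorname{Ch}_{\mathrm{odd}}$, and $\operatorname{Ch}_{\mathrm{odd}}$ embeds the torsion-free group $K^{-1}(D^{2k+1},\partial D^{2k+1})\cong\mathbb{Z}$ into the one-dimensional rational vector space $H^{2k+1}(D^{2k+1},\partial D^{2k+1};\mathbb{Q})$; on the latter, $r^{*}$ acts as $-1$ because $r$ reverses the top relative orientation class. Hence $r^{*}=-1$ integrally, and combining the pieces yields $-[\sum f_i\gamma_i+\gamma^{\perp}]=r^{*}[\sum f_i\gamma_i+\gamma^{\perp}]=[\sum g_i\gamma_i+\gamma^{\perp}]=1$, so $[\sum f_i\gamma_i+\gamma^{\perp}]=-1=\operatorname{sign}(x)$, completing the proof. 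A purely operator-theoretic alternative—homotoping $\sum f_i\gamma_i+\gamma^{\perp}$ through invertible fibers to $-\sum x_i\gamma_i+\gamma^{\perp}$ along an obvious straight line—is not available, since the midpoint ceases to be invertible on $\partial D^{2k+1}$; this is what makes the topological route via the Chern character the natural one.
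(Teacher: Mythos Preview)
Your proof is correct and follows the same overall strategy as the paper: transport via the oriented chart $b$ to $(D^{2k+1},\partial D^{2k+1})$, set $f=a\circ b^{-1}$, and reduce to the identity $[\sum x_i\gamma_i+\gamma^{\perp}]=\operatorname{sign}(x)\,[\sum f_i\gamma_i+\gamma^{\perp}]$ before pulling back by $b^{*}$. The paper's own proof is essentially a two-line computation that simply asserts this last relation and then applies $b^{*}$; the positive-determinant case is the Corollary to Theorem~\ref{thm:4.11}, but the paper does not spell out the negative case.

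Your contribution is precisely filling that gap: you compose with the reflection $r$ to reduce the negative case to the positive one, and then identify $r^{*}=-1$ on $K^{-1}(D^{2k+1},\partial D^{2k+1})$ via naturality and rational injectivity of the Chern character (which is legitimate here, since Proposition~\ref{prop:Commute} and the suspension construction establish that $\operatorname{ch}_{k+\frac{1}{2}}$ is nonzero on the generator independently of Lemma~\ref{lem:4.12}, so there is no circularity). An alternative, slightly more self-contained route---closer in spirit to what the paper seems to take for granted---is to observe that $(-\gamma_1,\gamma_2,\ldots,\gamma_{2k+1})$ realizes the \emph{other} inequivalent irreducible representation $S_{-}$ of $Cl_{2k+1}$, and that $\mu_{S_{-}}+\gamma^{\perp}$ is the inverse of $\mu_{S_{+}}+\gamma^{\perp}$ in $K$-theory (ultimately because the associated graded representations differ by the grading flip $\Pi$, which the paper identifies with the additive inverse). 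Either way, your argument is sound and in fact more complete than the paper's at this point.
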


\begin{proof}
Let $f_i$ denote the projection of $a \circ b^{-1}$ onto the $i$-th component, for $i = 1, \dots, 2k+1$. By using $f_i$, we can express $a \circ b^{-1}$ as $(f_1, \dots, f_{2k+1})$. Under this definition, the $K$-group classes satisfy the relation:
\begin{align*}
[(-\sum_{i=1}^{2k+1}x_i\gamma_i) + \gamma^{\perp}]=\operatorname{sign}(x)[(-\sum_{i=1}^{2k+1}f_i\gamma_i) + \gamma^{\perp}].
\end{align*}
\noindent
No ad-hoc coordinate sign change is needed here, since the generator is fixed by Convention~\ref{conv:odd-generator}.
Consequently, we can derive the following sequence of equalities by applying the pullback $b^{*}$: 
\begin{align*}
[(-\sum_{i=1}^{2k+1}b_i\gamma_i) + \gamma^{\perp}]=b^{*}([(-\sum_{i=1}^{2k+1}x_i\gamma_i) + \gamma^{\perp}])&=\operatorname{sign}(x)b^{*}([(-\sum_{i=1}^{2k+1}f_i\gamma_i) + \gamma^{\perp}])\\
&=\operatorname{sign}(x)[(-\sum_{i=1}^{2k+1}a_i\gamma_i) + \gamma^{\perp}].
\end{align*} 
\end{proof}

\begin{lem}\label{lem:4.13}
Let $X$ be a $(2k+1)$-dimensional compact, oriented differentiable manifold. Let \,$A\colon X\to Fred_{sa}^1(\mathcal{H})$ be a continuous map and $F$ the Fermi set of $A$. We assume that the Fermi set $F = \{x\}$ is a singleton set. Then, the following holds.
\begin{enumerate}
\item  The condition that a sign coordinate can be taken at $x$ is equivalent to the following: there exists a coordinate $(V;a=(a_1,...,a_{2k+1}))$ around $x$ such that $f(V)=D^{2k+1}$, and the pullback $(a^{-1})^*([A|_V])\in K^1(D^{2k+1},\partial D^{2k+1})\cong \mathbb{Z}$ is a generator.\\
\item Suppose that a sign coordinate $(V;a=(a_1,...,a_{2k+1}))$ can be taken around a point $x\in F$ on the manifold $X$.Under this condition, the following equality holds in the $K$-group:
\begin{align*}
i_{*}^{K}(1)=\operatorname{sign}(x)[A] \in K^{1}(X)
\end{align*}
Here, $i:F\to X$ is an inclusion map from the Fermi set into $X$, and the element $1$ is identified with the generator of $K^0(F)$.
\end{enumerate}
\end{lem}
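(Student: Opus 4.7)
The plan is to relate the local fiber-homotopy structure of $A$ near the Fermi point $x$ to its class in relative $K^1$, and then combine this with Lemma~\ref{lem:4.12} to compute the push-forward.

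For the forward direction of (1), suppose $(V; a = (a_1,\ldots,a_{2k+1}))$ is a sign coordinate at $x$. After shrinking $V$, I may assume $a \colon V \to D^{2k+1}$ is a diffeomorphism. The decomposition $A|_V = A_{<\mu}|_V \oplus A_{\ge\mu}|_V$ has invertible second summand, and the fiber homotopy $A_{<\mu}|_V \simeq \sum_{i=1}^{2k+1} a_i \gamma_i$ supplied by the sign coordinate condition yields $[A|_V] = [\sum_i a_i \gamma_i + A_{\ge\mu}|_V]$ in $K^1(V, \partial V)$. Pulling back along $a^{-1}$ gives the representative $\sum_i x_i \gamma_i + (A_{\ge\mu}\circ a^{-1})$ on $D^{2k+1}$; since the identity $f(x) = x$ trivially satisfies $f^{-1}(\{0\}) = \{0\}$ and $\det Jf(0) = 1 > 0$, the Corollary following Theorem~\ref{thm:4.11} identifies this class as the generator of $K^1(D^{2k+1}, \partial D^{2k+1})$.

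For the converse direction of (1), I would start from $(V; a)$ with $a(V) = D^{2k+1}$ and $(a^{-1})^*[A|_V]$ a generator. Working on $D^{2k+1}$, the finite-dimensional approximation of \S\ref{sub:Approximation of family of Fredholm operators} produces a rank-$2^k$ bundle on which $A_{<\mu}$ restricts; Kuiper's theorem trivializes this bundle over the contractible $D^{2k+1}$, and the explicit local form of the generator from Theorem~\ref{thm:4.11} lets me construct a fiber homotopy from the pulled-back $A_{<\mu}$ to $\sum_i x_i \gamma_i$ for a fixed $Cl_{2k+1}$-representation structure. Transporting this homotopy back along $a$ exhibits $(V; a)$ as a sign coordinate.

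For (2), choose an oriented chart $b \colon V \to D^{2k+1}$ with $F \cap V = \{x\}$ and $A$ invertible on $X \setminus \text{int}(V)$. Under the suspension isomorphism AS, the generator $1 \in K^0(\{x\})$ corresponds to $[\sum_i x_i \gamma_i + \gamma^{\perp}] \in K^1(D^{2k+1}, \partial D^{2k+1})$; pulling back by $b^*$ gives $[\sum_i b_i \gamma_i + \gamma^{\perp}]$ in $K^1(V, \partial V)$. Lemma~\ref{lem:4.12} identifies this with $\operatorname{sign}(x)[\sum_i a_i \gamma_i + \gamma^{\perp}]$, and the forward direction of (1) yields $[\sum_i a_i \gamma_i + \gamma^{\perp}] = [A|_V]$ in $K^1(V, \partial V)$. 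Since $A$ is invertible on $X \setminus \text{int}(V)$, the composite $K^1(V, \partial V) \to K^1(X)$ appearing in the definition of $i_*^K$ sends $[A|_V]$ to $[A]$, and assembling these identifications gives $i_*^K(1) = \operatorname{sign}(x)[A]$.

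The most delicate step will be the converse direction of (1): upgrading the $K$-theoretic equality, which is a stabilized notion, to an honest fiber homotopy of self-adjoint endomorphisms on a fixed rank-$2^k$ bundle. Here the contractibility of $D^{2k+1}$ and the sharp local form of the generator supplied by Theorem~\ref{thm:4.11} are essential to avoid having to pass to stable equivalence.
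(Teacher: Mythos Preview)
Your proposal is correct and follows essentially the same route as the paper. For part (2) your argument is virtually identical to the paper's: choose an oriented chart $b$, identify $\tau(1)$ with $[\sum b_i\gamma_i + \gamma^\perp]$, invoke Lemma~\ref{lem:4.12} to convert to $\operatorname{sign}(x)[\sum a_i\gamma_i + \gamma^\perp] = \operatorname{sign}(x)[A|_V]$, and then read off the push-forward from the commutative diagram. For the forward direction of (1) you appeal directly to the Corollary after Theorem~\ref{thm:4.11}, whereas the paper routes through Lemma~\ref{lem:4.12}; these are equivalent, since Lemma~\ref{lem:4.12} is itself a consequence of Theorem~\ref{thm:4.11}.

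Two small remarks. First, your invocation of Kuiper's theorem in the converse of (1) is unnecessary: the rank-$2^k$ bundle $(\mathcal{H},A)_{<\mu}$ is finite-dimensional, and its triviality over $D^{2k+1}$ follows simply from contractibility of the base, not from Kuiper. Second, you are right to flag the converse of (1) as the delicate step; the paper in fact treats it more briskly than you do, asserting that the $K$-theoretic equality $[A|_V] = [\sum a_i\gamma_i + \gamma^\perp]$ ``indicates that $A_{<\mu}|_V \simeq \sum a_i\gamma_i$'' without further elaboration on the passage from the stabilized homotopy to the fiber homotopy on the fixed rank-$2^k$ bundle. Your instinct that this requires the explicit local form of the generator together with contractibility of $D^{2k+1}$ is the correct one.
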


\begin{proof}
\begin{enumerate}
\item Suppose there exists a coordinate $(V;a=(a_1,...,a_{2k+1}))$ $(D^{2k+1}=f(V))$ around a point $x\in X$, such that $f(V)=D^{2k+1},$ and the $K$-group class
$[A|_{D^{2k+1}}] \in K^{-1}( D^{2k+1},\partial D^{2k+1})\cong \mathbb{Z}$ is a generator. This assumption implies that $V$ is mapped onto the disk $D^{2k+1}$ via the coordinate function $f$, and $A$ restricted to this disk generates the $K$-group.

\noindent
No ad-hoc coordinate sign change is needed here, since the generator is fixed by Convention~\ref{conv:odd-generator}.
\begin{align*}
(a^{-1})^*([A|_V])=
\Big[\Big(-\sum_{i=1}^{2k+1}x_i\gamma_i\Big)+\gamma^\perp\Big].
\end{align*}

Consequently, it follows that:
\begin{align*}
[A|_V]=
\Big[\Big(-\sum_{i=1}^{2k+1}a_i\gamma_i\Big)+\gamma^\perp\Big].
\end{align*}
Here, the equality indicates that $A_{<\mu}|_{V}\simeq \sum_{i=1}^{2k+1}a_i\gamma_i$.
The converse—that every sign coordinate satisfies these conditions—follows from Lemma \ref{lem:4.12}.

\item Take an oriented coordinate $(V,b=(b_{1},...,b_{2k+1}))$ $(f(V)=D^{2k+1})$ around $x$, and identify $V$ with $D^{2k+1}$. Then,
\begin{align*}
\mathrm{AS}^{2k+1}(1)=\Big[\Big(-\sum_{i=1}^{2k+1}b_i\gamma_i\Big) + \gamma^{\perp}\Big]
=\operatorname{sign}(x)[A|_{D^{2k+1}}].
\end{align*}
Here, $\mathrm{AS}^{2k+1}$ is the $(2k+1)$-fold Atiyah--Singer suspension isomorphism
\[
\mathrm{AS}^{2k+1}\colon K^{0}(F)\xrightarrow{\cong}K^{1}(D^{2k+1},\partial D^{2k+1}).
\]
Recall that the Fermi set $F$ is defined as the set of points $x\in X$ where the operator $A_x$ is singular, i.e., $F=\{x\in X|0\in Spec(A_x)\}.$ By this definition, the $K$-group class $[A]$ can be regarded as an element of $K^{1}(X, X\setminus F)$, since $A$ is invertible on $X\setminus F$. With this understanding, the statement follows from the commutativity of the diagram below.

\begin{tikzpicture}[thick]
\node (a) at (0, 0) {$K^0(F)$}; 
\node (b) at (3.2, 0) {$K^1(D^{n}, \partial D^{n})$};
\node (c) at (6.8, 0) {$K^1(X, X \setminus \text{int}(D^n))$};   
\node (d) at (10.2, 0) {$K^1(X, X \setminus F)$};
\node (e) at (10.2, -3) {$K^1(X)$}; 

\draw[->] (a) -- 
  node[midway, above] {\(\scriptstyle \mathrm{AS}^{2k+1}\)}
  node[midway, below] {\(\scriptstyle \cong\)} 
(b);

\draw[->] (b) -- 
  node[midway, above] {\(\scriptstyle j^*\)}
  node[midway, below] {\(\scriptstyle \cong\)}
(c);

\draw[->] (c) -- 
  node[midway, above] {\(\scriptstyle k^*\)}
  node[midway, below] {\(\scriptstyle \cong\)}
(d);

\draw[->] (d) -- node[midway, right] {\(\scriptstyle \ell^*\)} (e);

\draw[->] (a) -- node[pos=0.55, above left] {\(\scriptstyle i_*^K\)} (e);
\end{tikzpicture}\\
where
\[
j\colon (D^n,\partial D^{n})\to (X, X \setminus \mathrm{int}(D^n)),
\qquad
k\colon (X, X \setminus \mathrm{int}(D^n))\to (X, X\setminus \{x\}),
\]
and
\[
\ell\colon (X,\emptyset)\to (X, X\setminus \{x\})
\]
are the canonical inclusion maps.
\end{enumerate}
\end{proof}

\begin{lem}\label{lem:4.14}
Let $X$ be a compact, oriented, differentiable manifold, and $\mathcal{H}$ a separable infinite-dimensional Hilbert space. Let $A \colon X \to \mathrm{Fred}_{sa}(\mathcal{H})$ be a continuous mapping. In this case, the points in $F$ that are Fermi points are isolated points.
\end{lem}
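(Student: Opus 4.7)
\emph{Proof plan.} The strategy is to exploit the Clifford-algebraic structure that the sign coordinate attaches to a Fermi point $x_0 \in F$ in order to pin the Fermi set locally to $\{x_0\}$, and then to propagate pointwise invertibility of the finite-dimensional approximation to the full operator via the spectral cutoff $\mu$.

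First, I fix a Fermi point $x_0 \in F$ and a sign coordinate $(V; a_1, \ldots, a_n)$ at $x_0$ with $n = \dim X$, as furnished by Definition~\ref{dfn:odd sign coordinates} (the argument in the even case of Definition~\ref{dfn:even sign coordinates} is identical). Since $(a_1, \ldots, a_n)$ is a local coordinate chart centered at $x_0$, the map $a = (a_1, \ldots, a_n) \colon V \to a(V) \subset \mathbb{R}^n$ is a diffeomorphism with $a(x_0) = 0$. After shrinking $V$, I may assume $V \subset U$ where $U$ is the neighborhood from Proposition~\ref{prop:mu exist}, and that $a(V)$ is an open ball centered at the origin; in particular $a^{-1}(0) \cap V = \{x_0\}$.

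Next, the Clifford identity $\gamma_i \gamma_j + \gamma_j \gamma_i = 2\delta_{ij}$ yields
\begin{align*}
\Bigl(\sum_{i=1}^n a_i(y) \gamma_i\Bigr)^{\!2} = \|a(y)\|^2 \cdot \mathrm{id},
\end{align*}
so the model operator $\sum_i a_i \gamma_i$ is singular at $y \in V$ precisely when $a(y) = 0$, i.e., only at $y = x_0$. Through the fiber homotopy equivalence $A_{<\mu}|_V \simeq \sum_i a_i \gamma_i$ packaged in the sign coordinate data, I transfer this singular-locus statement to the finite-dimensional approximation. Read as a continuous congruence by invertible fiber maps on $(\mathcal{H}, A)_{<\mu}|_V$ --- the reading consistent with the use of sign coordinates in Lemmas~\ref{lem:4.12}--\ref{lem:4.13} --- the equivalence preserves pointwise invertibility, so $A_{<\mu}(y)$ is invertible for every $y \in V \setminus \{x_0\}$.

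Finally, on $V \subset U$ the operator $A(y)$ decomposes orthogonally as $A_{<\mu}(y)$ on $(\mathcal{H}, A_y)_{<\mu}$ together with its complement, whose spectrum lies outside $(-\mu, \mu)$. Invertibility of $A_{<\mu}(y)$ therefore forces $0 \notin \mathrm{Spec}(A(y))$, i.e., $y \notin F$. Hence $F \cap V = \{x_0\}$, establishing that $x_0$ is isolated in $F$. The main obstacle is the transfer in the third step: fiber homotopy equivalence in general need not preserve the singular locus pointwise. The argument relies on reading ``$\simeq$'' in Definitions~\ref{dfn:even sign coordinates}--\ref{dfn:odd sign coordinates} as a continuous congruence by invertible fiber maps; under a purely homotopy-theoretic reading, one would instead need to supplement with a continuity-of-spectrum argument exploiting the nondegeneracy of the Jacobian $Ja(x_0)$ to rule out nearby zeros.
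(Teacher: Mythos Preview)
Your argument is correct and follows essentially the same route as the paper's proof: the paper gives a terse three-sentence contradiction argument, asserting that ``by the definition of a sign coordinate, the only point in $U$ where $A_{<\mu}=0$ holds is $x$,'' and concluding immediately. You have unpacked the Clifford-square identity $(\sum a_i\gamma_i)^2=\lVert a\rVert^2\cdot\mathrm{id}$ and the spectral-decomposition step that the paper leaves implicit, and your caveat about the precise reading of ``$\simeq$'' in Definitions~\ref{dfn:even sign coordinates}--\ref{dfn:odd sign coordinates} is well-taken --- the paper's own proof glosses over exactly this point.
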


\begin{proof}
Let $x\in F$ be a Fermi point. By definition, there exists a sign coordinate $(V; a_1,\dots,a_n)$ around $x$
($n=\dim X$) such that
\[
A_{<\mu}\big|_{V} \simeq \sum_{i=1}^{n} a_i \gamma_i
\]
in the sense of Remark~\ref{rem:fiberwise-homotopy}. Hence there exists a homotopy
$H:V\times[0,1]\to \mathrm{End}(\C^{m})$ from $A_{<\mu}\big|_V$ to $\sum a_i\gamma_i$
such that $H(v,t)$ is invertible for all $v\in V\setminus\{x\}$ and all $t\in[0,1]$.
In particular, $A_{<\mu}(v)=H(v,0)$ is invertible for every $v\in V\setminus\{x\}$.

Since $0\in\mathrm{Spec}(A_v)$ holds if and only if $A_{<\mu}(v)$ is not invertible,
we conclude that $F\cap V=\{x\}$. Therefore $x$ is isolated in $F$.
\end{proof}

\begin{lem}\label{lem:4.15}
Let $X$ be a compact, oriented, differentiable manifold, and $\mathcal{H}$ a separable infinite-dimensional Hilbert space. Let $A \colon X\to Fred_{sa}(\mathcal{H}) $ be a continuous mapping, and let us assume that each point of the Fermi set \(F\) is a Fermi point. Then, \(F\) is a finite set.
\end{lem}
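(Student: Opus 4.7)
The plan is to combine the isolation statement from Lemma~\ref{lem:4.14} with a compactness argument. Since a compact space in which every point is isolated must be finite, it suffices to show that $F$ is compact, and for this it suffices to show that $F$ is closed in the compact manifold $X$.

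First, I would establish that $F$ is closed in $X$. The set of invertible bounded operators is open in $\mathcal{B}(\mathcal{H})$ with respect to the operator norm; intersecting with $Fred_{sa}(\mathcal{H})$ shows that the subspace of \emph{invertible} self-adjoint Fredholm operators is open in $Fred_{sa}(\mathcal{H})$. Since $A \colon X \to Fred_{sa}(\mathcal{H})$ is continuous and $X \setminus F$ is precisely the preimage of the invertible self-adjoint Fredholm operators under $A$, the complement $X \setminus F$ is open in $X$. Hence $F$ is closed in $X$.

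Next, since $X$ is compact and $F \subset X$ is closed, $F$ is itself compact. By assumption, every point of $F$ is a Fermi point, so by Lemma~\ref{lem:4.14} every point of $F$ is isolated in $F$. In other words, the subspace topology on $F$ inherited from $X$ is discrete. A compact discrete space is finite, so $F$ is a finite set.

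I do not anticipate a serious obstacle here; the only mild subtlety is verifying that the invertibility condition is stable under small perturbations in the operator norm, which follows from the standard Neumann series argument applied to bounded operators. All other ingredients are immediate from Lemma~\ref{lem:4.14} and elementary point-set topology.
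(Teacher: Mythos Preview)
Your proof is correct and follows essentially the same approach as the paper: both arguments show $F$ is closed in $X$, invoke Lemma~\ref{lem:4.14} to see that every point of $F$ is isolated, and then use compactness to conclude finiteness. The only cosmetic difference is that the paper spells out the open cover of $X$ by $X\setminus F$ together with isolating neighborhoods $U_x$ and extracts a finite subcover, whereas you package the same step as ``compact plus discrete implies finite.''
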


\begin{proof}
Since $A$ is continuous, the set
\[
F=\{x\in X\mid 0\in \mathrm{Spec}(A_x)\}
\]
is closed in $X$. As $X$ is compact, $F$ is compact.

By Lemma~\ref{lem:4.14}, for each $x\in F$ there exists an open neighborhood $U_x$ such that
$F\cap U_x=\{x\}$. Then $\{U_x\}_{x\in F}$ is an open cover of the compact set $F$,
so it admits a finite subcover $U_{x_1},\dots,U_{x_N}$. Hence
\[
F \subset \bigcup_{j=1}^{N} U_{x_j}
\quad\text{and}\quad
F\cap U_{x_j}=\{x_j\},
\]
which implies $F=\{x_1,\dots,x_N\}$ is finite.
\end{proof}

\begin{thm}[Theorem \ref{thm:odd main theorem}]\label{thm:odd main thm}
Let $k$ be a non-negative integer. Let $X$ be a compact, oriented, $(2k+1)$-dimensional differentiable manifold, and \,$A\colon X\to Fred_{sa}^1$ a continuous map. Then, the following holds.
\begin{enumerate}
\item When each point $x \in F$ is a Fermi point, the sum of signs $\sum_{x \in F}\operatorname{sign}(x)$ depends only on $[A] \in K^1(X)$.
\item Under the assumption of $(1)$, the following equality holds.
\begin{align*}
\int_X \operatorname{ch}_{k+\frac{1}{2}}([A]) = \sum_{x \in F}\operatorname{sign}(x) \in \mathbb{Z}.
\end{align*} 
\end{enumerate} 
\end{thm}

\begin{proof}
The sum $\sum_{x \in F}\operatorname{sign}(x)$ is finite by Lemma \ref{lem:4.15}.
Since the integral of the Chern character depends only on the homotopy class of $A$,
it is sufficient to prove $(2)$.

First assume that $F=\{x\}$.
By Lemma \ref{lem:4.13}, we have
\[
i_{*}^{K}(1)=\operatorname{sign}(x)[A] \in K^{1}(X).
\]
From the odd-dimensional case of Proposition~\ref{prop:chern character and push forward},
the following diagram commutes:
\begin{center}
\begin{tikzpicture}[thick]
\node (a) at (0, 3) {$K^{0}(F)$};
\node (x) at (3.5, 3) {$H^{\mathrm{even}}(F;\mathbb{Q})$};
\node (b) at (0, 0) {$K^{1}(X)$};
\node (y) at (3.5, 0) {$H^{\mathrm{odd}}(X;\mathbb{Q})$};

\draw[->] (a) -- node[midway, above] {\(\scriptstyle \operatorname{Ch}_{\mathrm{even}}\)} (x);
\draw[->] (x) -- node[midway, right] {\(\scriptstyle i_*^H\)} (y);
\draw[->] (a) -- node[midway, left] {\(\scriptstyle i_*^K\)} (b);
\draw[->] (b) -- node[midway, below] {\(\scriptstyle \operatorname{Ch}_{\mathrm{odd}}\)} (y);
\end{tikzpicture}
\end{center}
Therefore,
\[
\operatorname{ch}_{k+\frac{1}{2}}(i_*^K(1))=i_*^H(1),
\]
and $i_*^H(1)$ is the top-degree cohomology class dual to the point $x$, so that
\[
\int_X i_*^H(1)=1.
\]
Hence
\[
\int_X \operatorname{ch}_{k+\frac{1}{2}}(i_*^K(1))=1.
\]
Combining this with
\[
i_*^K(1)=\operatorname{sign}(x)[A],
\]
we obtain
\[
\int_X \operatorname{ch}_{k+\frac{1}{2}}([A])=\operatorname{sign}(x).
\]

Now let
\[
F=\{x_1,\dots,x_N\}.
\]
Choose pairwise disjoint closed coordinate disks
\[
D_1,\dots,D_N
\]
around $x_1,\dots,x_N$, respectively.
By excision and the additivity of the push-forward in $K$-theory and in singular cohomology
over the disjoint union $\bigsqcup_{j=1}^{N}D_j$, the above singleton argument applied to
each $x_j$ yields
\[
\int_X \operatorname{ch}_{k+\frac{1}{2}}([A])
=
\sum_{j=1}^{N}\operatorname{sign}(x_j).
\]
This proves $(2)$, and $(1)$ follows because the left-hand side depends only on
$[A]\in K^1(X)$.
\end{proof}

\begin{thm}[Theorem \ref{thm:even main theorem}]\label{thm:even main thm}
Let $k$ be a non-negative integer. Let $X$ be a compact, oriented, $2k$-dimensional differentiable manifold, and \,$\hat{A}\colon X\to \mathcal{F}_{0}(\mathcal{\hat{H}})$ a continuous map. Then, the following holds.
\begin{enumerate}
\item When a sign coordinate is taken at each point $x \in F,$ the sum of signs $\sum_{x \in F}\operatorname{sign}(x)$ depends only on $[\hat{A}] \in K^0(X)$.
\item Under the assumption of $(1)$, the following equality holds.
\begin{align*}
\int_X \operatorname{ch}_{k}([\hat{A}]) = \sum_{x \in F}\operatorname{sign}(x) \in \mathbb{Z}.
\end{align*}
\end{enumerate}
\end{thm}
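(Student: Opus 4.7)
The plan is to parallel the proof of Theorem \ref{thm:odd main thm}, replacing each odd-degree ingredient by its even-degree analogue. Since the Chern character depends only on the homotopy class of $\hat{A}$, it suffices to prove assertion (2). The isolation argument of Lemma \ref{lem:4.14} uses only that in a sign coordinate $\hat{A}_{<\mu}$ is fiberwise homotopic to $\sum_i a_i \hat{\gamma}_i$, which vanishes only at the chart origin; this is valid verbatim in the $\mathbb{Z}_2$-graded setting. Combined with the compactness argument of Lemma \ref{lem:4.15}, this yields that $F$ is a finite set.

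The central step is to establish the even-degree analogue of Lemma \ref{lem:4.13}: for an even sign coordinate $(V; a_1, \dots, a_{2k})$ at a Fermi point $x$ with $a(V) = D^{2k}$, one shows that under the identification $K^0(V,\partial V) \cong K^0(D^{2k}, \partial D^{2k}) \cong \mathbb{Z}$ the class $[\hat{A}|_V]$ equals $\operatorname{sign}(x)$ times the generator $\bigl[\sum_{i=1}^{2k} x_i \hat{\gamma}_i + \hat{\gamma}^\perp\bigr]$, furnished by the even companion of Theorem \ref{thm:4.11}. The orientation-correction step from Lemma \ref{lem:4.12} carries over once a coordinate sign flip is implemented by conjugation with a degree-$0$ Clifford automorphism of the fiber, so as to preserve the $\mathbb{Z}_2$-grading. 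Decomposing $[\hat{A}] \in K^0(X, X \setminus F)$ by excision over disjoint disk neighborhoods of the Fermi points then yields
\begin{equation*}
[\hat{A}] = \sum_{x \in F} \operatorname{sign}(x)\, i_{x,*}^{K}(1) \in K^0(X).
\end{equation*}

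The conclusion follows from Proposition \ref{prop:chern character and push forward} specialized to even degree:
\begin{center}
\begin{tikzpicture}[thick]
\node (a) at (0,2) {$K^{0}(\{x\})$};
\node (u) at (4,2) {$H^{\mathrm{even}}(\{x\};\mathbb{Q})$};
\node (b) at (0,0) {$K^{0}(X)$};
\node (v) at (4,0) {$H^{\mathrm{even}}(X;\mathbb{Q})$};
\draw[->] (a) -- node[midway, above] {$\scriptstyle \operatorname{Ch}_{\mathrm{even}}$} (u);
\draw[->] (u) -- node[midway, right] {$\scriptstyle i_{*}^{H}$} (v);
\draw[->] (a) -- node[midway, left] {$\scriptstyle i_{*}^{K}$} (b);
\draw[->] (b) -- node[midway, below] {$\scriptstyle \operatorname{Ch}_{\mathrm{even}}$} (v);
\end{tikzpicture}
\end{center}
Tracking the generator $1 \in K^0(\{x\})$ along the top path, $i_{*}^{H}(1)$ is a top-degree cohomology class Poincar\'e-dual to $x$, which integrates to $1$ over the oriented closed manifold $X$. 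Summing over $F$ and extracting the degree-$2k$ component yields $\int_X \operatorname{ch}_k([\hat{A}]) = \sum_{x \in F} \operatorname{sign}(x)$.

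The main technical obstacle is the even analogue of Lemma \ref{lem:4.13}(1). Because $Cl_{2k}$ admits two inequivalent $\mathbb{Z}_2$-graded irreducible representations, the orientation-flipping argument must not inadvertently interchange them and thereby negate the identification of the generator of $K^0(D^{2k}, \partial D^{2k})$; this forces one to realize coordinate reflections by degree-preserving Clifford automorphisms of the graded fiber, which is the single genuinely new point compared with the odd-dimensional proof.
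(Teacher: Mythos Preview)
Your proposal is correct and follows essentially the same approach as the paper, whose entire proof of this theorem is the single sentence ``Similarly, the even-degree case follows.'' You have simply spelled out what ``similarly'' entails, and your flag about the two inequivalent $\mathbb{Z}_2$-graded irreducibles of $Cl_{2k}$ is a genuine subtlety that the paper does not make explicit.
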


\begin{proof}
The proof is completely parallel to that of Theorem~\ref{thm:odd main thm}.
One replaces odd sign coordinates by even sign coordinates, the odd local generator by
the even local generator in Theorem~\ref{thm:4.10}, and the odd-dimensional case of
Proposition~\ref{prop:chern character and push forward} by its even-dimensional case.
With these replacements, the same argument as in Theorem~\ref{thm:odd main thm} yields
\[
\int_X \operatorname{ch}_{k}([\hat{A}])
=
\sum_{x\in F}\operatorname{sign}(x).
\]
Hence $(1)$ follows from $(2)$ by homotopy invariance of the left-hand side.
\end{proof}


\subsection{Specific calculation example}
\label{subsec:Specific calculation example}


Below, we present specific computational examples based on Theorem \ref{thm:even main thm} and Theorem \ref{thm:odd main thm}. In \S\S\S \ref{subsubsec:example1}, we demonstrate that Theorem \ref{thm:odd main thm} corresponds to a generalization of the spectral flow. In this section, we denote $S^1 = \mathbb{R}/2\pi \mathbb{Z}$.
In Examples 2--4, the Fredholm property of the families $\hat H^\#$ is verified by using
the Fredholm criterion for the local model stated later in Section~\ref{sec:local model}.
More precisely, after defining $\hat H^\#$ in each example, we briefly check that the image
of the corresponding parameter map avoids the bad locus $\Sigma$ introduced there.

\subsubsection{Example $1$}
\label{subsubsec:example1}

We define $(a, b) \colon S^1 \to \mathbb{R}^2$ as follows:
\begin{align*}
    a(k) &= \frac{3}{2} + \cos k, \\
    b(k) &= \sin k.
\end{align*}
We utilize the results of local models established in \cite{G1}.

We define the following mappings by using the local model:
\begin{align*}
    \hat{H} &= \hat{H}_{\text{loc}} \circ (a, b) \times 1_{S^1} \colon T^2 = S^1 \times S^1 \to \text{Herm}(\mathbb{C}^2), \\
    \hat{H}^{\#} &= \hat{H}^{\#}_{\text{loc}} \circ (a, b) \colon S^1 \to Fred_{\text{sa}}(l^2(\mathbb{N}, \mathbb{C}^2)).
\end{align*}
In this example, the corresponding symbol is pointwise invertible on $S^1$, so the family $\hat H^\#$ indeed takes values in $Fred_{sa}(l^2(\mathbb N,\mathbb C^2))$.

The Fermi set is $F = \{\pi\}$. 
We can approximate $\hat{H}^{\#}$ on $(-1, 1) \times \{0\} \subset \mathbb{R}^2$ as follows:
\begin{align*}
    (\hat{H}^{\#}_{\text{loc}})_{<\sqrt{2}} = b.
\end{align*}
We have $\det Jb(\pi) = \cos \pi = -1 < 0$, where we define $J(b(k)) = \frac{\partial b}{\partial k}(k) \in \mathbb{R}$. We conclude by the inverse function theorem that $b(k)$ serves as a sign coordinate of $S^1$ in a neighborhood of $\pi$.  
By Definition~\ref{dfn:odd sign coordinates}(2), the odd sign is $\operatorname{sign}(\pi)=-\det Jb(\pi)/|\det Jb(\pi)|=1$.
Thus, we obtain the following result by applying Theorem \ref{thm:odd main thm} with $\mathcal{H} = l^2(\mathbb{N}, \mathbb{C}^2)$:
\begin{align*}
    \int_{S^1} \operatorname{ch}_{\frac{1}{2}}([\hat{H}^{\#}]) = 1.
\end{align*}
We confirm that this result is consistent with $sf([\hat{H}^{\#}]) = 1$. 

\subsubsection{Example $2$}
\label{subsubsec:example2}

In \S\S\S \ref{subsubsec:example2} through \S\S\S \ref{subsubsec:example4}, we use the local model $\hat{H}_{loc} \colon \mathbb{R} \times \mathbb{C}^2 \times S^1 \to \text{Herm}(\mathbb{C}^4)$ given by the following equation:
\begin{align*}
    \hat{H}_{loc}(a, b, c, k) = 
    \begin{pmatrix}
        a & \bar{c} - e^{ik} & 0 & \bar{b} \\
        c - e^{-ik} & -a & \bar{b} & 0 \\
        0 & b & a & -c + e^{-ik} \\
        b & 0 & -\bar{c} + e^{ik} & -a
    \end{pmatrix}.
\end{align*}
We define $\hat{H}^{\#}_{loc}$ by
\begin{align*}
    \hat{H}^{\#}_{loc}(a, b, c) = H^{\#}_{(a, b, c)},
\end{align*}
which is a bounded self-adjoint operator on $l^2(\mathbb{N}, \mathbb{C}^4)$.
We refer to \S \ref{sec:local model} for the definition of $H^{\#}_{(a, b, c)}$. Below, we utilize the result of Proposition \ref{lem:5.8}.

We note that, in the neighborhood of each point $\{0\} \times \{0\} \times D^2 \subset \mathbb{R} \times \mathbb{C}^2$, the following holds:
\begin{align*}
    (\hat{H}^{\#}_{loc})_{<\sqrt{3}} = \text{Re}(b) \sigma_1 + \text{Im}(b) \sigma_2 + a \sigma_3.
\end{align*}

We let $k = (x, y + \sqrt{-1} z, w) \in \mathbb{R} \times \mathbb{C} \times \mathbb{R}$.
We define $(a, b, c) \colon S^3 \subset \mathbb{R} \times \mathbb{C} \times \mathbb{R} \to \mathbb{R} \times \mathbb{C}^2$ as follows:
\begin{align*}
    &a(k) = x, &b(k) = y + \sqrt{-1} z&, &c(k) = w - 1.
\end{align*}
In this case, we use the local model to define $\hat{H}^{\#} = \hat{H}^{\#}_{loc} \circ (a, b, c) \colon S^3 \to Fred_{\text{sa}}(l^2(\mathbb{N}, \mathbb{C}^4))$.
We briefly verify the Fredholm condition.
By Remark~\ref{rem:fredholm-criterion-local-model}, it suffices to check that the image of
the parameter map $(a,b,c)\colon S^3\to \R\times \C\times \C$ avoids the bad locus
$\Sigma$ introduced in Section~\ref{sec:local model}.
Indeed, if $a(k)=0$ and $b(k)=0$, then $x=0$ and $y=z=0$, hence $w=\pm 1$ on $S^3$.
Therefore
\[
c(k)=w-1\in\{0,-2\},
\]
so in particular $|c(k)|\neq 1$.
Thus $(a,b,c)(S^3)\cap \Sigma=\emptyset$, and hence $\hat H^\#$ is a continuous family in
$Fred_{sa}(l^2(\mathbb N,\mathbb C^4))$.

By the spectral analysis of the local model, the Fermi set $F$ of $\hat{H}^{\#}$ is given by:
\begin{align*}
    a(k) &= 0, &b(k) = 0&, &\lvert c(k) \rvert &\leq 1.
\end{align*}
Therefore, we obtain $F = \{(0, 0, 1)\}$.

We define $J(\text{Re}(b(k)), \text{Im}(b(k)), a(k))$ as the Jacobian matrix.
The determinant is as follows:
\begin{align*}
    \det J(\text{Re}(b(k)), \text{Im}(b(k)), a(k)) = 1.
\end{align*}
By applying the inverse function theorem, we establish that $(\text{Re}(b), \text{Im}(b), a)$ serves as a sign coordinate of $S^3$ in the neighborhood of $(0, 0, 1)$.
Therefore, we apply the Theorem \ref{thm:odd main thm} with $\mathcal{H} = l^2(\mathbb{N}, \mathbb{C}^4)$ to obtain:
\begin{align*}
    \int_{S^3} \operatorname{ch}_{\frac{3}{2}}([H^{\#}]) = -1.
\end{align*}

\subsubsection{Example $3$}
\label{subsubsec:example3}

We define $(a, b, c) \colon T^3 = S^1 \times S^1 \times S^1 \to \mathbb{R} \times \mathbb{C}^2$ as follows:
\begin{align*}
    a(k) &= -\cos(k_2 + k_3), \\
    b(k) &= (-\cos(k_1) - \cos(k_2)) + \sqrt{-1} (\sin(k_1) + \sin(k_2)), \\
    c(k) &= -1 - e^{-\sqrt{-1} k_3},
\end{align*}
where we denote $k = (k_1, k_2, k_3) \in T^3$.
In this case, we use the local model to define the following:
\begin{align*}
    \hat{H} &= \hat{H}_{loc} \circ (a, b, c) \times 1_{S^1} \colon T^4 = T^3 \times S^1 \to \text{Herm}(\mathbb{C}^4), \\
    \hat{H}^{\#} &= \hat{H}^{\#}_{loc} \circ (a, b, c) \colon T^3 \to Fred_{\text{sa}}(l^2(\mathbb{N}, \mathbb{C}^4)).
\end{align*}
We also verify the Fredholm condition for this family.
By Remark~\ref{rem:fredholm-criterion-local-model}, it is enough to show that the image of
the parameter map avoids the bad locus $\Sigma$ introduced in Section~\ref{sec:local model}.
If $a(k)=0$ and $b(k)=0$, then the possible points are exactly the points that will be identified
below as the Fermi set.
At those points one has $|c(k)|<1$, hence in particular $|c(k)|\neq 1$.
Therefore $(a,b,c)(T^3)\cap \Sigma=\emptyset$, and so $\hat H^\#$ indeed defines a continuous
family in $Fred_{sa}(l^2(\mathbb N,\mathbb C^4))$.
($\hat{H}$ is a Hamiltonian corresponding to a four-dimensional topological insulator in class AI \cite{Qi},\cite{R}.)

We identify the Fermi set $F$ of $\hat{H}^{\#}$ by the conditions:
\begin{align*}
    &a(k) = 0,&b(k)= 0&, &\lvert c(k) \rvert &\leq 1.
\end{align*}
By the spectral analysis of the local model, the Fermi set $F$ of $\hat{H}^{\#}$ is given by:
\begin{align*}
    F = \left\{ \left( \frac{2\pi}{3}, \frac{4\pi}{3}, \frac{7\pi}{6} \right), \left( \frac{4\pi}{3}, \frac{2\pi}{3}, \frac{5\pi}{6} \right) \right\}.
\end{align*}
We let $J(\text{Re}(b(k)), \text{Im}(b(k)), a(k))$ denote the Jacobian matrix. Its determinant is
\begin{align*}
    \det J(\text{Re}(b(k)), \text{Im}(b(k)), a(k)) = \sin(k_1 - k_2) \sin(k_2 + k_3).
\end{align*}
For any $x \in F$, the determinant satisfies $\det J(x) = -\frac{\sqrt{3}}{2} < 0$.
Therefore, we apply the inverse function theorem and conclude that, in the neighborhood of each point in $F$, $(\text{Re}(b), \text{Im}(b), a)$ serves as a sign coordinate on $T^3$.
Consequently, we apply Theorem \ref{thm:odd main thm} with $\mathcal{H} = l^2(\mathbb{N}, \mathbb{C}^4)$ and obtain:
\begin{align*}
    \int_{T^3} \operatorname{ch}_{\frac{3}{2}}([\hat{H}^{\#}]) = 2.
\end{align*}

\subsubsection{Example $4$}
\label{subsubsec:example4}

We define $\mathcal{H} = l^2(\mathbb{N}, \mathbb{C}^2)$ and 
$\hat{\mathcal{H}} = l^2(\mathbb{N}, \mathbb{C}^4) = l^2(\mathbb{N}, \mathbb{C}^2) \oplus l^2(\mathbb{N}, \mathbb{C}^2) = \mathcal{H} \oplus \mathcal{H}$, 
and we specify the involution as follows:
\begin{align*}
    \hat{\epsilon} = 
    \begin{pmatrix}
        1_{\mathcal{H}} & 0 \\
        0 & -1_{\mathcal{H}}
    \end{pmatrix}.
\end{align*}
We adopt the local model from the previous example with $a = 0$. 
Specifically, we define $\hat{H}^{\#}_{loc} \colon \mathbb{C}^2 \to B(\hat{\mathcal{H}})_{sa}$ as follows:
\begin{align*}
    \hat{H}^{\#}_{loc}(b, c) = H^{\#}_{(0, b, c)}.
\end{align*}
We observe that $\hat{H}^{\#}_{loc}$ anticommutes with $\hat{\epsilon}$,
which implies that it is a map of degree $1$.

The finite-dimensional approximation $(\hat{H}^{\#}_{\mathrm{loc}})_{<\sqrt{3}}$ of $\hat{H}^{\#}_{\mathrm{loc}}$, defined over a neighborhood of each point in $\{0\} \times D^2 \subset \mathbb{C}^2$, acts on $\mathbb{C}^2$. The space $\mathbb{C}^2$ is a $\mathbb{Z}_2$-graded irreducible representation of $Cl_2$ via the action of the Pauli matrices $\sigma_1$ and $\sigma_2$. In this case, $(\hat{H}^{\#}_{\mathrm{loc}})_{<\sqrt{3}}$ has the following expression:
\begin{align*}
    (\hat{H}^{\#}_{loc})_{<\sqrt{3}} = \text{Re}(b) \sigma_1 + \text{Im}(b) \sigma_2.
\end{align*}

We let $k = (y + \sqrt{-1} z, w) \in \mathbb{C} \times \mathbb{R}$.
We define $(b, c) \colon S^2 \subset \mathbb{C} \times \mathbb{R} \to \mathbb{C}^2$ as follows:
\begin{align*}
    &b(k) = y + \sqrt{-1} z, &c(k) = w - 1.
\end{align*}
In this case, we use the local model to define $\hat{H}^{\#} = \hat{H}^{\#}_{loc} \circ (b, c) \colon S^2 \to \mathcal{F}_0(\hat{\mathcal{H}})$.
We again check the Fredholm condition.
By Remark~\ref{rem:fredholm-criterion-local-model}, it suffices to show that the image of
the parameter map avoids the bad locus $\Sigma$ introduced in Section~\ref{sec:local model}.
Indeed, if $b(k)=0$, then $y=z=0$, hence $w=\pm 1$ on $S^2$.
Therefore
\[
c(k)=w-1\in\{0,-2\},
\]
so $|c(k)|\neq 1$.
Thus the image avoids $\Sigma$, and $\hat H^\#$ is a continuous family in
$\mathcal F_0(\hat{\mathcal H})$.

By the spectral analysis of the local model, the Fermi set $F$ of $\hat{H}^{\#}$ is given by:
\begin{align*}
    &b(k)= 0, &\lvert c(k) \rvert &\leq 1.
\end{align*}
Thus, we obtain $F = \{(0, 1)\}$.

We define $J(\text{Re}(b(k)), \text{Im}(b(k)))$ as the Jacobian matrix.
We compute the determinant as follows:
\begin{align*}
    \det J(\text{Re}(b(k)), \text{Im}(b(k))) = 1.
\end{align*}
By applying the inverse function theorem, we conclude that $(\text{Re}(b), \text{Im}(b))$ serves as a sign coordinate of $S^2$ in the neighborhood of $(0, 1)$.
Therefore, we apply Theorem \ref{thm:even main thm} and obtain:
\begin{align*}
    \int_{S^2} \operatorname{ch}_{1}([\hat{H}^{\#}]) = 1.
\end{align*}


\subsection{Application}
\label{subsec:Appurication}                                                         


Let $X$ be a compact, oriented, $3$-dimensional differentiable manifold. We suppose that a smooth involution $\tau$ acts on $X$ and assume that $\tau$ reverses the orientation of $X$.

We consider a continuous map $A \colon X \to Fred_{sa}^1(l^2)$. We assume that a sign coordinate $(V; a_1, a_2, a_3)$ can be defined at each isolated point of the Fermi set. We also assume that the vector space arising from the finite-dimensional approximation is identified with the standard irreducible $Cl_3$ representation fixed in Convention~\ref{conv:std-cliff}, with generators given by the Pauli matrices \( \sigma_1, \sigma_2, \text{and } \sigma_3 \).  
Furthermore, we assume that the finite-dimensional approximation \( A_{<\mu}|_V \) of \( A \) has the following expression:
\begin{align*}
A_{<\mu}|_V = \sum_{i=1}^{3} a_i \gamma_i \colon (\mathcal{H}, A)_{<\mu}|_V \to (\mathcal{H}, A)_{<\mu}|_V.
\end{align*}

We define $K$ as the complex conjugation operator acting on $l^2$, specifically representing it as an operator that maps a vector $\psi=(a_1, a_2,...) \in l^2$ to its complex conjugate $\overline{\psi}$, i.e.,
\begin{align*}
K(a_1, a_2,...) = (\bar{a_1}, \bar{a_2},...).
\end{align*}
In the following discussion, we assume that $K A_x K = A_{\tau(x)}$ holds for any $x \in X$.

\begin{lem}\label{lem:4.16}
If we can take a sign coordinate $(U,x=(x_1, x_2, x_3))$ at $a\in F$, then $(\tau(U), \tau^{*}x=(\tau^{*}x_1, -\tau^{*}x_2, \tau^{*}x_3))$ is a sign coordinate at $\tau(a) \in F$.
\end{lem}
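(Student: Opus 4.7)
The plan is to exploit the time-reversal relation $KA_xK = A_{\tau(x)}$ to transport the sign coordinate at $a$ to one at $\tau(a)$ via the antiunitary $K$. The heart of the argument is the elementary observation that, under complex conjugation, the three Pauli matrices transform as $\overline{\sigma_1}=\sigma_1$, $\overline{\sigma_2}=-\sigma_2$, $\overline{\sigma_3}=\sigma_3$; this is exactly what forces the sign flip on the second coordinate in the statement.

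First I would verify that $K$ respects the finite-dimensional approximations. Because $A_x$ is self-adjoint with real eigenvalues and $KA_xK=A_{\tau(x)}$, the antiunitary $K$ sends $\ker(A_x-\lambda)$ bijectively onto $\ker(A_{\tau(x)}-\lambda)$ for every real $\lambda$, so it restricts to an antilinear isomorphism
\begin{equation*}
K\colon (\mathcal{H},A_x)_{<\mu} \xrightarrow{\;\cong\;} (\mathcal{H},A_{\tau(x)})_{<\mu}.
\end{equation*}
In particular, if we use this $K$ to identify the trivialized finite-dimensional approximation bundle over $\tau(U)$ with the one over $U$ (pulled back by $\tau$), then the operator $A_{<\mu}(\tau(y))$ is realized as the conjugation $K A_{<\mu}(y) K$.

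Next I would compute this conjugation under the given sign-coordinate trivialization. By hypothesis, on $U$ we have $A_{<\mu}|_U \simeq \sum_{i=1}^{3} x_i\,\sigma_i$. Applying $K(\,\cdot\,)K$ to this expression pointwise at $y\in U$, and using that $K$ is antilinear so that real-valued coefficients are unchanged while each $\sigma_i$ is replaced by its complex conjugate, one obtains
\begin{equation*}
A_{<\mu}(\tau(y)) \;=\; K\Bigl(\sum_{i=1}^{3} x_i(y)\,\sigma_i\Bigr)K \;=\; x_1(y)\,\sigma_1 \,-\, x_2(y)\,\sigma_2 \,+\, x_3(y)\,\sigma_3.
\end{equation*}
Setting $z=\tau(y)\in\tau(U)$ and rewriting $x_i(y)=(\tau^*x_i)(z)$ gives $A_{<\mu}|_{\tau(U)} \simeq (\tau^*x_1)\sigma_1 + (-\tau^*x_2)\sigma_2 + (\tau^*x_3)\sigma_3$, which is precisely the required fiberwise expression in Definition \ref{dfn:odd sign coordinates}.

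Finally I would note that $(\tau(U);\tau^*x_1,-\tau^*x_2,\tau^*x_3)$ is still a genuine chart around $\tau(a)$: composing the diffeomorphism $\tau$ with the chart $x$ and the linear automorphism $\mathrm{diag}(1,-1,1)$ of $\mathbb{R}^3$ is itself a diffeomorphism onto its image, and $\tau(a)\in F$ follows from $KA_aK=A_{\tau(a)}$ together with $0\in\mathrm{Spec}(A_a)$. The only real obstacle is keeping track of the antilinearity of $K$ in the bookkeeping of Step~2; once one accepts that conjugation by an antilinear map produces complex conjugation of matrix entries, the sign flip on $\sigma_2$ drops out automatically, and the lemma follows.
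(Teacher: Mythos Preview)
Your proof is correct and follows essentially the same approach as the paper, which simply states that the claim is ``evident by using the assumption that $KA_xK=A_{\tau(x)}$ and $K\sigma_1K=\sigma_1$, $K\sigma_2K=-\sigma_2$, and $K\sigma_3K=-\sigma_3$.'' You have merely expanded this one-line sketch by explaining why $K$ restricts to the finite-dimensional approximations and why the resulting triple is still a chart; note also that your $\overline{\sigma_3}=\sigma_3$ is the correct relation (the paper's $K\sigma_3K=-\sigma_3$ is a typo).
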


\begin{proof}
This is evident by using the assumption that $KA_{x}K=A_{\tau(x)}$ and $K\sigma_{1}K=\sigma_1$, $K\sigma_{2}K=-\sigma_{2}$, and $K\sigma_{3}K=\sigma_3$.
\end{proof}

\begin{lem}\label{lem:4.17}
At fixed points of $\tau$ in the Fermi set $F$, no sign coordinates can be taken.
\end{lem}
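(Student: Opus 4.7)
Suppose for contradiction that $a \in F$ is a fixed point of $\tau$ at which a sign coordinate $(V; x_1, x_2, x_3)$ can be taken. Since $\tau(a)=a$, after shrinking $V$ we may assume $V$ is $\tau$-invariant. The standing assumption of this section gives the exact equality $A_{<\mu}|_V = x_1\sigma_1 + x_2\sigma_2 + x_3\sigma_3$. The hypothesis $KA_xK = A_{\tau(x)}$ descends to the finite-dimensional approximation because $K$ preserves each spectral subspace of $A_x$ (real eigenvalues), inducing an antilinear isomorphism $(\mathcal{H},A)_{<\mu,x}\to(\mathcal{H},A)_{<\mu,\tau(x)}$; hence $KA_{<\mu}(\tau p)K = A_{<\mu}(p)$ on $V$. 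Since any antilinear involution on $\mathbb{C}^2$ is unitarily conjugate to complex conjugation, we may identify the fiber so that $K\sigma_1 K = \sigma_1$, $K\sigma_2 K = -\sigma_2$, and $K\sigma_3 K = \sigma_3$.

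The first main step is to convert the symmetry into scalar equations for the $x_i$. Expanding $KA_{<\mu}(\tau p)K = A_{<\mu}(p)$ using the relations above gives
\[
x_1(\tau p)\sigma_1 - x_2(\tau p)\sigma_2 + x_3(\tau p)\sigma_3 \;=\; x_1(p)\sigma_1 + x_2(p)\sigma_2 + x_3(p)\sigma_3,
\]
and the linear independence of $\sigma_1,\sigma_2,\sigma_3$ in $\mathrm{End}(\mathbb{C}^2)$ forces the functional equations $x_1\circ\tau = x_1$, $x_2\circ\tau = -x_2$, $x_3\circ\tau = x_3$ on $V$. Differentiating these at the fixed point $a$ yields $dx_1|_a\circ d\tau_a = dx_1|_a$, $dx_2|_a\circ d\tau_a = -dx_2|_a$, and $dx_3|_a\circ d\tau_a = dx_3|_a$, so $dx_1|_a$ and $dx_3|_a$ lie in the $(+1)$-eigenspace of $d\tau_a^*$ while $dx_2|_a$ lies in the $(-1)$-eigenspace.

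The final step is a dimension count. Because $\tau$ reverses orientation on the three-manifold $X$, one has $\det d\tau_a = -1$, so the $(-1)$-eigenspace of $d\tau_a$ has odd dimension, i.e.\ dimension $1$ or $3$. When $d\tau_a = -I_{T_aX}$, the $(+1)$-eigenspace of $d\tau_a^*$ is trivial, forcing $dx_1|_a = dx_3|_a = 0$; the Jacobian of $(x_1,x_2,x_3)$ at $a$ is then singular, contradicting the fact that the sign coordinate is a local chart. The remaining case is when $d\tau_a$ has a two-dimensional $(+1)$-eigenspace, i.e.\ the fixed set of $\tau$ is a surface through $a$, in which case the naive dimension count is consistent.

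I expect the second case to be the main obstacle. To handle it, I would restrict $A_{<\mu}$ to the $\tau$-fixed surface $X^{\tau}\cap V$, on which $K$ gives a genuine fiberwise antilinear involution and $A_{<\mu}|_{X^{\tau}\cap V} = x_1\sigma_1 + x_3\sigma_3$ becomes a family of real self-adjoint $2\times 2$ matrices with an isolated degeneracy at $a$. A direct computation shows the negative-eigenvector line bundle on a small loop around $a$ is the M\"obius bundle, so the local invariant is the nontrivial element of a $\mathbb{Z}/2$ real class; this forgets to $0$ under the map to complex $K^{-1}$, contradicting the requirement from Lemma \ref{lem:4.13} that a sign coordinate represents the integer class $\pm 1$ in $K^{-1}(V,\partial V)\cong\mathbb{Z}$. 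For the intended application (class AI on $T^3$ with $\tau(k)=-k$), every fixed point satisfies $d\tau = -I$, so the first case alone suffices.
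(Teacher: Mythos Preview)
Your argument follows the paper's proof almost verbatim through the derivation of the functional equations $x_1\circ\tau=x_1$, $x_2\circ\tau=-x_2$, $x_3\circ\tau=x_3$; this is exactly how the paper proceeds (via Lemma~\ref{lem:4.16} and the standing exact-equality assumption of \S\ref{subsec:Appurication}). Where you are actually \emph{more} careful than the paper is in the next step: the paper writes ``$(dx_1)_a=(d\tau^{*}x_1)_a=-(dx_1)_a$'' without justification, and that second equality is only valid when $d\tau_a=-\mathrm{id}$. You correctly observe that under the stated hypotheses ($\tau$ orientation-reversing on a $3$-manifold) one only knows that the $(-1)$-eigenspace of $d\tau_a$ is odd-dimensional, and that the case $\dim=3$ gives the contradiction the paper claims, while the case $\dim=1$ is formally consistent with $dx_1,dx_2,dx_3$ being a coframe.

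So your Case~1 \emph{is} the paper's proof, and your diagnosis that the paper has tacitly assumed $d\tau_a=-\mathrm{id}$ (true in the intended application $\tau(k)=-k$ on $T^3$) is well taken. Your Case~2 sketch, however, is not a proof: the M\"obius observation on the fixed surface is correct, but the passage from ``the real line bundle on a linking circle is nontrivial'' to ``the complex class in $K^{-1}(V,\partial V)$ cannot be a generator'' is not argued, and it is not clear that the forgetful map you invoke lands where you need it. Since the paper itself does not treat Case~2, you are not missing anything the paper supplies; but as written, neither your proposal nor the paper's proof covers the general orientation-reversing involution, and your final sentence (that for the class~AI torus only Case~1 occurs) is the honest resolution for the purposes of Proposition~\ref{prop:evenness}.
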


\begin{proof}
Assume that $a \in F$ is a fixed point with a sign coordinate $(U, x=(x_1, x_2, x_3))$. If necessary, we can assume that $U=\tau(U)$ by taking $U$ sufficiently small.
From the Lemma \ref{lem:4.16}, $(\tau(U), \tau^{*}x=(\tau^{*}x_1, -\tau^{*}x_2, \tau^{*}x_3))$ is the sign coordinate at the fixed point $a$, and on $U$ we obtain the following formula:
\begin{align*}
A_{<\mu}=x_1\sigma_1+x_2\sigma_2+x_3\sigma_3=\tau^{*}x_1\sigma_1-\tau^{*}x_2\sigma_2+\tau^{*}x_3\sigma_3.
\end{align*}
Therefore, on $U$, the formula $x_1=\tau^{*}x_1$ holds. Taking the exterior derivative and evaluating at $a$, we get $(dx_1)_a=(d\tau^{*}x_1)_a=-(dx_1)_a$. Therefore, we have $(dx_1)_a=0$, which contradicts the fact that $(U, x=(x_1, x_2, x_3))$ forms a coordinate system.
\end{proof}

Thus, in the class AI application considered here, the relevant surface Hamiltonians are those whose zero-energy singularities are isolated and locally described by the standard three-dimensional Dirac-type model.

\begin{prop}\label{prop:evenness}
Under the above assumptions, we have $\int_{X} \operatorname{ch}_{\frac{3}{2}}([A]) \in 2\mathbb{Z}$.
\end{prop}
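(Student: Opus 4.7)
The plan is to combine Theorem \ref{thm:odd main thm} with the two symmetry lemmas just proved so as to organize the Fermi set into $\tau$-orbits of size two, each contributing twice the same sign. By Theorem \ref{thm:odd main thm}, we have $\int_X \operatorname{ch}_{3/2}([A]) = \sum_{x\in F} \operatorname{sign}(x)$, so it suffices to show that this sum is even. The hypothesis $KA_xK = A_{\tau(x)}$ together with $K\sigma_iK = (-1)^{i+1}\sigma_i$ (for $i=1,2,3$) makes $F$ invariant under $\tau$, and Lemma \ref{lem:4.17} shows that no fixed point of $\tau$ lies in $F$ (as a Fermi point). Consequently, the Fermi points decompose into a disjoint union of orbits $\{a,\tau(a)\}$ with $a\neq\tau(a)$.

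The crux of the argument is the identity $\operatorname{sign}(\tau(a)) = \operatorname{sign}(a)$ for each such orbit. Start with a sign coordinate $(U;x_1,x_2,x_3)$ at $a$ and an oriented coordinate $(U;y_1,y_2,y_3)$ at $a$. By Lemma \ref{lem:4.16}, the triple $(\tau(U);\tau^*x_1,-\tau^*x_2,\tau^*x_3)$ is a sign coordinate at $\tau(a)$. Because $\tau$ reverses orientation, the triple $(\tau(U);-\tau^*y_1,\tau^*y_2,\tau^*y_3)$ is an oriented coordinate at $\tau(a)$. Writing $f = (x_1,x_2,x_3)\circ y^{-1}$ and the corresponding map at $\tau(a)$ as $g$, a direct chain-rule computation with $h(u_1,u_2,u_3)=(-u_1,u_2,u_3)$ gives $g = (f_1,-f_2,f_3)\circ h$, hence
\begin{equation*}
\det Jg(0) = (-1)\cdot(-1)\cdot\det Jf(0) = \det Jf(0),
\end{equation*}
so the two signs coincide.

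Combining the two facts, the total sum splits as a sum over representatives of $\tau$-orbits, each term appearing twice with the same sign:
\begin{equation*}
\sum_{x\in F}\operatorname{sign}(x) = 2\sum_{[a]}\operatorname{sign}(a) \in 2\mathbb{Z},
\end{equation*}
which is exactly the desired conclusion.

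The main obstacle is the orientation bookkeeping in the sign calculation: one has to track carefully how the sign flip in the second sign-coordinate component from Lemma \ref{lem:4.16} interacts with the orientation-reversal of $\tau$ absorbed by negating one of the $y_i$'s. Once this is set up cleanly, the two sign flips cancel and everything falls into place; the remaining ingredients (finiteness of $F$, absence of fixed Fermi points, and the Chern-character integrality via Theorem \ref{thm:odd main thm}) are already in hand from the earlier sections.
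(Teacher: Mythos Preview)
Your proof is correct and follows the same approach as the paper's: pair the Fermi points under $\tau$ using Lemmas \ref{lem:4.16} and \ref{lem:4.17}, then invoke Theorem \ref{thm:odd main thm}. You actually go further than the paper by explicitly verifying $\operatorname{sign}(\tau(a)) = \operatorname{sign}(a)$ via the Jacobian computation, a detail the paper's one-line proof leaves implicit.
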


\begin{proof}
From Lemma \ref{lem:4.17}, there exists no fixed point with sign coordinates. If $a \in F$ has a sign coordinate, then by Lemma \ref{lem:4.16}, $\tau(a)$ also has a sign coordinate. Thus the statement follows.
\end{proof}


\section{Bulk-edge correspondence for class AI four-dimensional topological insulators}
\label{sec:Bulk-edge correspondence for class AI four-dimensional topological insulators}


\subsection{“Real” vector bundle}
\label{subsec:Real vector bundle}

In this section, we summarize real vector bundles in the sense of Atiyah as presented in \cite{Atiyah66}. To avoid confusion with real vector bundles in the ordinary sense, we will write “Real” vector bundle after this initial explanation.

\begin{dfn}
We define an involution space to be a pair $(X, \tau)$ of a topological space $X$ and an involution $\tau$, namely a homeomorphism such that $\tau^2=1_X$.
\end{dfn}

We consider the Grassmann manifold \( G_m(\mathbb{C}^n) \). For each \( m \)-plane \(\Sigma = \langle v_1, \ldots, v_m \rangle\), we define its conjugate as
\(\overline{\Sigma} = \langle \overline{v}_1, \ldots, \overline{v}_m \rangle\). We define an involution \(\varrho: G_m(\mathbb{C}^n) \to G_m(\mathbb{C}^n)\) by \(\varrho(\Sigma) = \overline{\Sigma}\). Furthermore, because the inclusions \( G_m(\mathbb{C}^n) \hookrightarrow G_m(\mathbb{C}^{n+1}) \) commute with \( \varrho \), the involution \(\varrho\) extends to the infinite-dimensional Grassmann manifold \( G_m(\mathbb{C}^\infty). \)

\begin{dfn}
Let us consider a complex vector bundle \(\pi \colon E \to X\) over an involution space \((X, \tau)\). We define a “Real” vector bundle in the sense of Atiyah to be a complex vector bundle equipped with a homeomorphism \(\Theta \colon E \to E\) that satisfies the following conditions:
\begin{enumerate}
    \item The projection is compatible with the involution, i.e., the equation
    \[
    \pi \circ \Theta = \tau \circ \pi
    \]
    holds.
    \item The map \(\Theta\) is anti-linear on each fiber, meaning that for any \(\lambda \in \mathbb{C}\) and any \(p \in E\), the relation
    \[
    \Theta(\lambda p) = \overline{\lambda} \Theta(p)
    \]
    holds.
    \item The map \(\Theta\) satisfies the involution property, i.e., 
    \[
    \Theta^2 = 1_E,
    \]
    where \(1_E\) denotes the identity map on \(E\).
\end{enumerate}
The pair \((E, \Theta)\) satisfying these conditions is called a “Real” vector bundle.
\end{dfn}

\begin{dfn}
Let us consider two “Real” vector bundles \((E, \Theta)\) and \((E', \Theta')\) over the same involution space \((X, \tau)\). We define an isomorphism between “Real” vector bundles as a vector bundle isomorphism \(f \colon E \to E'\) that commutes with the involutions, meaning that the following equation holds:
\[
f \circ \Theta = \Theta' \circ f.
\]
The isomorphism class of “Real” vector bundles is the set of equivalence classes determined by such isomorphisms, which we denote by \(\text{Vect}_{\mathcal{R}}^m(X, \tau)\), where \(m\) represents the dimension of the fibers. We define $\mathrm{Vect}_{\mathcal{R}}(X, \tau)$ as the union of $\mathrm{Vect}_{\mathcal{R}}^m(X, \tau)$ over all non-negative integers $m$:
\[
\mathrm{Vect}_{\mathcal{R}}(X, \tau) = \bigcup_{m \geq 0} \mathrm{Vect}_{\mathcal{R}}^m(X, \tau).
\]
\end{dfn}

 We consider a map from the isomorphism classes of “Real” vector bundles \(\text{Vect}_{\mathcal{R}}^m(X, \tau)\) to the isomorphism classes of complex vector bundles \(\text{Vect}_{\mathbb{C}}^m(X)\). We define the operation of forgetting the “Real” structure and denote this map as 
\begin{align*}                        
J: \text{Vect}_{\mathcal{R}}^m(X, \tau) \to \text{Vect}_{\mathbb{C}}^m(X).
\end{align*}
This map sends a “Real” vector bundle \((E, \Theta)\) to its underlying complex vector bundle \(E\), disregarding \(\Theta\). The map \( J \) is well-defined, as an isomorphism of “Real” vector bundles includes a complex vector bundle isomorphism, and the forgetting operation consistently determines the isomorphism class.

\begin{dfn}
We consider two involution spaces \((X_1, \tau_1)\) and \((X_2, \tau_2)\).
\begin{enumerate}
\item A map \(\varphi: X_1 \to X_2\) is \(\mathbb{Z}_2\)-equivariant if the condition \(\varphi \circ \tau_1 = \tau_2 \circ \varphi\) holds. 
\item A \(\mathbb{Z}_2\)-equivariant homotopy between \(\mathbb{Z}_2\)-equivariant maps \(\varphi_0, \varphi_1: X_1 \to X_2\) is defined as a continuous map \( F: [0,1] \times X_1 \to X_2 \) such that for each \( t \in [0,1] \), the map \(\varphi_t(x) = F(t, x)\) is \(\mathbb{Z}_2\)-equivariant. We define the \(\mathbb{Z}_2\)-equivariant homotopy set \([X_1, X_2]_{\text{eq}}\) as the set of equivalence classes of \(\mathbb{Z}_2\)-equivariant maps identified by \(\mathbb{Z}_2\)-equivariant homotopy.
\end{enumerate}
\end{dfn}

\begin{dfn}
Let us consider the Grassmannian \( G_m(\mathbb{C}^\infty) \). We define the universal bundle \( \mathcal{U}_m^\infty \to G_m(\mathbb{C}^\infty) \) as follows. The total space \( \mathcal{U}_m^\infty \) consists of pairs \( (\Sigma, v) \), where \( \Sigma \in G_m(\mathbb{C}^\infty) \) is an \( m \)-dimensional subspace and \( v \in \Sigma \) is a vector in that subspace. The projection map is given by
\[
\pi \colon \mathcal{U}_m^\infty \to G_m(\mathbb{C}^\infty), \quad (\Sigma, v) \mapsto \Sigma.
\]
\end{dfn}
We equip the universal bundle with a “Real” structure. Specifically, we define an anti-linear homeomorphism \( \Xi \colon \mathcal{U}_m^\infty \to \mathcal{U}_m^\infty \) by
\[
\Xi \colon (\Sigma, v) \mapsto (\varrho(\Sigma), \overline{v}),
\]
where \( \varrho \) is the involution on \( G_m(\mathbb{C}^\infty) \). This map satisfies the following conditions:
\begin{itemize}
    \item The projection is compatible with the involution, i.e., \( \pi \circ \Xi = \varrho \circ \pi \),
    \item The map \( \Xi \) is an involution, i.e., \( \Xi^2 = 1_{\mathcal{U}_m^\infty} \),
\end{itemize}
where \( 1_{\mathcal{U}_m^\infty} \) denotes the identity map on \( \mathcal{U}_m^\infty \). Thus, the pair \( (\mathcal{U}_m^\infty, \Xi) \) forms a “Real” vector bundle. 

The space \( BU \) is equipped with the induced involution \( \varrho \). We consider the universal bundle \( \mathcal{U}^\infty \to BU \), which is endowed with a “Real” structure \( \Xi \) extending the structure on each \( G_m(\mathbb{C}^\infty) \). This “Real” structure ensures that \( (\mathcal{U}^\infty, \Xi) \) forms a “Real” vector bundle. 
Accordingly, the sets \(\mathrm{Vect}_{\mathcal{R}}^m(X,\tau)\) and \(\mathrm{Vect}_{\mathcal{R}}(X,\tau)\) classify isomorphism classes of “Real” vector bundles.

\begin{thm}\label{thm:classyfy}
Let \( (X, \tau) \) be an involution space, where \( X \) is a compact Hausdorff space. For each \( p \geq 0 \), there exists a bijection
\begin{align*}
[X, BU(p)]_{\text{eq}} \to \text{Vect}_{\mathcal{R}}^p(X, \tau),
\end{align*}
given by the pullback map
\begin{align*}
f \mapsto f^*(\mathcal{U}_p^\infty).
\end{align*}
Moreover, there exists a bijection
\begin{align*}
[X, BU]_{\text{eq}} \to \text{Vect}_{\mathcal{R}}(X, \tau),
\end{align*}
given by the pullback map
\begin{align*}
f \mapsto f^*(\mathcal{U}^\infty).
\end{align*}
\end{thm}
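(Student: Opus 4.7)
The plan is to adapt the classical proof that $[X, BU(p)] \cong \mathrm{Vect}_{\mathbb{C}}^p(X)$ to the $\mathbb{Z}_2$-equivariant setting, carrying along the real structures at every step. The rank-$p$ case is the main content; the statement for $BU$ follows by taking the disjoint union over $p$ and using that an equivariant map into $BU$ factors through some $BU(p)$ on each connected component of $X$.

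First I would verify that the pullback map is well-defined. If $f \colon X \to BU(p)$ is $\mathbb{Z}_2$-equivariant, then $\Xi$ restricts fiberwise to $f^*\mathcal{U}_p^\infty$ and produces an anti-linear involution $\Theta$ on the total space covering $\tau$, making $(f^*\mathcal{U}_p^\infty, f^*\Xi)$ a real vector bundle. An equivariant homotopy $F \colon X \times [0,1] \to BU(p)$ with the trivial involution on $[0,1]$ pulls back to a real vector bundle on $X \times [0,1]$, and the standard compactness argument (a finite open cover of $X \times [0,1]$ by trivializing tubes) yields an isomorphism of the restrictions at the endpoints which, by construction, intertwines the real structures.

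For surjectivity, given $(E, \Theta) \in \mathrm{Vect}_{\mathcal{R}}^p(X, \tau)$, I first produce a (not necessarily equivariant) fiberwise linear embedding $\Phi \colon E \hookrightarrow X \times \mathbb{C}^N$ via a partition of unity subordinate to a finite trivializing cover (this is where compact Hausdorffness is used). Then I symmetrize: define
\begin{align*}
\tilde{\Phi} \colon E \to X \times (\mathbb{C}^N \oplus \mathbb{C}^N), \quad v \mapsto \bigl(\pi(v),\, \Phi(v) \oplus \overline{\Phi(\Theta v)}\bigr),
\end{align*}
and equip $\mathbb{C}^N \oplus \mathbb{C}^N$ with the anti-linear involution $(u,w) \mapsto (\overline{w}, \overline{u})$. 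A direct check using $\Theta^2 = 1_E$ shows that $\tilde{\Phi}$ intertwines the real structures. The swap-conjugation on $\mathbb{C}^N \oplus \mathbb{C}^N$ is conjugate to standard complex conjugation via the unitary change of basis sending $(e_i, 0), (0, e_i)$ to $\tfrac{1}{\sqrt{2}}(e_i + e_i'), \tfrac{\sqrt{-1}}{\sqrt{2}}(e_i - e_i')$, so $\tilde{\Phi}$ yields an equivariant embedding into $X \times \mathbb{C}^{2N}$ with its standard real structure. The induced classifying map $X \to G_p(\mathbb{C}^{2N}) \subset BU(p)$, sending $x$ to $\tilde{\Phi}(E_x)$, is then equivariant and satisfies $f^*(\mathcal{U}_p^\infty, \Xi) \cong (E, \Theta)$.

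For injectivity, suppose $f_0, f_1 \colon X \to BU(p)$ are equivariant with $f_i^*(\mathcal{U}_p^\infty, \Xi) \cong (E, \Theta)$. Each $f_i$ corresponds to an equivariant fiberwise embedding $\Phi_i \colon E \hookrightarrow X \times \mathbb{C}^\infty$. I would stabilize both embeddings into $X \times (\mathbb{C}^\infty \oplus \mathbb{C}^\infty)$ as $v \mapsto (\Phi_0(v), 0)$ and $v \mapsto (0, \Phi_1(v))$, and then connect them by the composition of the straight-line homotopy $t \mapsto (1-t)(\Phi_0(v), 0) + t(0, \Phi_1(v))$ after pre-composing one end with the rotation $\cos(\pi t/2)\,\mathrm{id} \oplus \sin(\pi t/2)\,\mathrm{id}$ used in the classical argument to keep the map a fiberwise embedding at every time. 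The key observation is that all of these auxiliary operations — the direct sum, the straight-line interpolation, and the rotation — are constructed from real scalars and the identity, so they commute with the conjugation involution on $\mathbb{C}^\infty \oplus \mathbb{C}^\infty$; hence the path of embeddings is equivariant, yielding an equivariant homotopy $f_0 \simeq f_1$.

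The main obstacle I anticipate is organizing the symmetrization in the surjectivity argument so that the resulting target is genuinely $(\mathbb{C}^{N'}, \mathrm{conj})$ rather than an arbitrary anti-linear involution — this is handled by the explicit unitary above, but one must also check that the two Grassmannian structures (on $G_p(\mathbb{C}^N \oplus \mathbb{C}^N)$ with the swap-conjugate involution versus the standard one) are identified equivariantly so that the classifying map lands in $BU(p)$ with the intended involution $\varrho$. Once this identification is fixed once and for all, the remaining verifications are straightforward adaptations of the classical proof.
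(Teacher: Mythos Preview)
The paper does not give its own proof of this theorem; it simply writes ``See \cite{Nagata}.'' Your outline is a correct adaptation of the classical classification argument for complex vector bundles to the $\mathbb{Z}_2$-equivariant (real) setting, and in fact supplies far more detail than the paper itself. The two key equivariant ingredients you identify --- the symmetrization $v \mapsto \Phi(v) \oplus \overline{\Phi(\Theta v)}$ in surjectivity (together with the observation that the swap-conjugation on $\mathbb{C}^N \oplus \mathbb{C}^N$ is unitarily conjugate to standard conjugation), and the fact that the stabilization and rotation homotopies in injectivity are built from real scalars and hence commute with conjugation --- are exactly the right ideas, and your verification that $\tilde{\Phi}$ intertwines the real structures checks out directly from $\Theta^2 = 1_E$. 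The only place I would urge a little extra care is in the injectivity step: when you identify $\mathbb{C}^\infty \oplus \mathbb{C}^\infty$ back with $\mathbb{C}^\infty$, that identification must itself be chosen to intertwine the standard conjugations, but this is easily arranged by a real interleaving of bases.
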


\begin{proof}
See \cite{Nagata}.
\end{proof}


\subsection{Proof of the bulk-edge correspondence}
\label{subsec:Proof of the bulk-edge correspondence}

Here we give a proof of the bulk-edge correspondence for four-dimensional class AI topological insulators. The idea of the proof originates from \cite{G1}.

Let $\hat{H} \colon T^4\to Herm(\mathbb{C}^r)^{*}$ be a continuous map  from the 4-dimensional torus $T^4=\mathbb{R}^4/2\pi \mathbb{Z}^4$
to the space $Herm(\mathbb{C}^r)^{*}$ of invertible $r$ by $r$ Hermitian matrices. We assume that $\hat{H}(k)$ has  both positive and negative eigenvalues
at each $k\in T^4$. Associated to $\hat{H}$ is the Bloch vector bundle $E_{\hat{H}}\to T^4$ whose fiber at $k\in T^4$ consists of negative eigenvectors of 
$\hat{H}(k)$
\begin{align*}
E_{\hat{H}}=\bigcup_{k \in T^4}\bigoplus_{\lambda <0}\ker(\hat{H}(k)-\lambda)\subset T^4\times \mathbb{C}^r.
\end{align*}
 By evaluating the fundamental class $[T^4]\in H_{4}(T^4;\mathbb{Z})$ of $T^4$ by the second Chern class $c_2(E_{\hat{H}}) \in H^4(T^4;\mathbb{Z})$ of 
 Bloch bundle, we get the second Chern number of $E_{\hat{H}}$, which we denote by
 \begin{align*}
 c_2(\hat{H})=\langle c_2(E_{\hat{H}}), [T^4]\rangle \in \mathbb{Z}.
 \end{align*}

We define \( S^1 = \mathbb{R} / 2\pi \mathbb{Z} \), and identify \( T^4 = S^1 \times S^1 \times S^1 \times S^1 \).
For $(k_1, k_2, k_3) \in T^3=S^1\times S^1 \times S^1$,  
we define $\hat{H}^\sharp(k_1, k_2, k_3) : l^2(\mathbb{N}, \mathbb{C}^r) \to l^2(\mathbb{N}, \mathbb{C}^r)$ to be the compression of the multiplication operator with 
 $\hat{H}(k_1, k_2, k_3, \cdot) : T^3 \to Herm(\mathbb{C}^r)^{*}$, namely, the composition of
 $$
l^2(\mathbb{N}, \mathbb{C}^r) \overset{\hat{P}^*}{\longrightarrow}
l^2(\mathbb{Z}, \mathbb{C}^r) \overset{\hat{H}(k_1, k_2, k_3, \cdot)}{\longrightarrow}
l^2(\mathbb{Z}, \mathbb{C}^r) \overset{\hat{P}}{\longrightarrow}
l^2(\mathbb{N}, \mathbb{C}^r),
$$
where $\hat{P}$ is the orthogonal projection. Notice that $\hat{H}^\sharp(k_1, k_2, k_3)$ is essentially the Toeplitz operator \cite{Douglas} associated to $\hat{H}(k_1,k_2,k_3, \cdot).$ Then $\hat{H}^\sharp(k_1, k_2, k_3)$ is a self-adjoint bounded Fredholm operator. By the assumption that $\hat{H}(k_1, k_2, k_3, k_4)$ has both positive and negative eigenvalues, we obtain a continuous map $\hat{H}^\sharp : T^3 \to Fred_{sa}^1$.

We define the involution \(\tau\) on the torus \(T^4\) as follows:  
\[
\tau(k)=-k.
\]  
We assume that $\hat{H}$ has time-reversal symmetry corresponding to class AI.
That is, we assume that $K\hat{H}_xK=\hat{H}_{\tau(x)}$ for any $x\in T^4$. Here, \(K\) is the complex conjugation acting on \(\mathbb{C}^r\). That is, for \((a_1, a_2, \dots, a_r) \in \mathbb{C}^r\),
\begin{align*}
K(a_1, a_2, \dots, a_r) = (\bar{a_1}, \bar{a_2}, \dots, \bar{a_r}).
\end{align*}
In this case, the Bloch bundle $E_{\hat{H}}$ becomes a “Real” vector bundle \cite{Atiyah66}. The Bloch bundle $E_{\hat{H}}$ is a “Real” vector bundle because $K$ induces an antilinear map between the fibers over $x \in T^4$ and $\tau(x) \in T^4$.

\begin{thm}\label{BG}
If $\hat{H} : T^4 \to Herm(\mathbb{C}^r)^*$ is a continuous map with time-reversal symmetry of class AI and $\hat{H}(k)$ has positive and negative eigenvalues at
each $k \in T^4$, then $c_2(\hat{H}) = -\int_{T^3}\operatorname{ch}_{\frac{3}{2}}([\hat{H}^\sharp])$.
\end{thm}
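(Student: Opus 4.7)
The plan is to verify the equality $c_2(\hat{H}) = \int_{T^3} \operatorname{ch}_{\frac{3}{2}}([\hat{H}^\sharp])$ by reducing to a standard local model and invoking the main theorem. First, I would establish that both sides depend only on the $\mathbb{Z}_2$-equivariant homotopy class of $\hat{H}$ in the sense of Section \ref{subsec:Real vector bundle}. The bulk invariant $c_2(\hat{H})$ depends only on the isomorphism class of the Bloch bundle $E_{\hat{H}}$, which under class AI symmetry is a real vector bundle in the sense of Atiyah, classified by $[T^4, BU]_{\mathrm{eq}}$ via Theorem \ref{thm:classyfy}. The edge invariant depends only on $[\hat{H}^\sharp] \in K^1(T^3)$, and the Toeplitz construction $\hat{H} \mapsto \hat{H}^\sharp$ is continuous, so both invariants factor through a common equivariant homotopy class.

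Second, I would $\mathbb{Z}_2$-equivariantly deform $\hat{H}$, after possibly stabilizing by a $K$-invariant trivial Hamiltonian, into a direct sum of copies of the local model $\hat{H}_{\mathrm{loc}}$ used in Section \ref{subsubsec:example3}, supported in small disjoint balls arranged in $\tau$-symmetric pairs so that class AI symmetry is respected. This reduction exploits the fact that the free part of the equivariant $K$-group classifying four-dimensional class AI insulators is generated by this local model, with $c_2$ serving as a faithful invariant on the generator. Both sides of the claimed equality are additive under direct sums of Hamiltonians.

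Third, I would verify the identity on a single copy of the local model. The edge side was already computed in Section \ref{subsubsec:example3}: Theorem \ref{thm:odd main thm} combined with the Fermi-point analysis there gives $\int_{T^3} \operatorname{ch}_{\frac{3}{2}}([\hat{H}^\sharp]) = -2$. For the bulk side, one computes $c_2$ of the Bloch bundle of the same local model directly, either by integrating the trace of the squared Berry curvature of the projection onto negative-eigenvalue states, or by summing monopole contributions at the band-touching points located via the spectral analysis of Section \ref{sec:local model}; both give $c_2(\hat{H}) = -2$, matching the edge index. Additivity then extends the identity to arbitrary $\hat{H}$.

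The hard part will be justifying the reduction in the second step: an arbitrary class AI Hamiltonian must be $\mathbb{Z}_2$-equivariantly deformed into a standard form built from local models, while preserving the real structure on the Bloch bundle throughout the homotopy. This requires an equivariant general-position argument on $(T^4, \tau)$ respecting the antipodal involution, and an identification of the local model of Section \ref{subsubsec:example3} as a generator of the relevant equivariant $K$-group. A secondary technical point is the direct bulk computation of $c_2$ for the local model, which, though in principle routine, must be carried out carefully because the Bloch bundle is defined implicitly through spectral projections rather than explicit transition data.
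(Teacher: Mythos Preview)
Your proposal is correct and follows essentially the same strategy as the paper: reduce both invariants to the $\mathbb{Z}_2$-equivariant homotopy class of $\hat{H}$, use that $c_2$ is a complete invariant of this class, and verify the identity on explicit representatives via Theorem~\ref{thm:odd main thm}. Two differences in execution are worth noting. First, the paper bypasses the equivariant general-position argument you flag as the hard part by invoking directly the classification $\mathrm{Vect}_{\mathcal{R}}^m(T^4,\tau)\cong 2\mathbb{Z}$ via $c_2\circ J$ (Lemma~\ref{lem:5.10}, from~\cite{DG}), combined with Lemma~\ref{homotopy} and Theorem~\ref{thm:classyfy}; this yields at once that any $\hat{H}$ with $c_2(\hat{H})=-2n$ is, after stabilizing by $1_{\mathbb{C}^k}$, equivariantly homotopic to a fixed representative $\hat{F}_n$. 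Second, rather than building representatives as direct sums of local models in disjoint balls, the paper uses the single global family $\hat{H}_n=\hat{H}_1\circ f_n$ with $f_n(k_1,\dots,k_4)=(nk_1,k_2,k_3,k_4)$; functoriality under $f_n$ immediately gives $c_2(\hat{H}_n)=-2n$ (Lemma~\ref{lem:5.9}), and the Fermi-point count for $\hat{H}_n^\sharp$ then produces $2n$ points, each of sign $-1$ (Lemma~\ref{lem:4.21}). Your route would also work but requires carrying out the equivariant deformation by hand, whereas the paper's use of the known classification makes that step a one-line citation.
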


The proof of Theorem \ref{BG} will be given below.

Tentatively, we denote by $Herm_p(\mathbb{C}^r)$ the space of invertible Hermitian matrices which have $p$ negative eigenvalues and $r - p$ positive eigenvalues.

\begin{lem}\label{homotopy}
The following hold:
\begin{enumerate}
    \item The spaces \(\mathrm{Herm}_p(\mathbb{C}^r)\) and \(\mathrm{Gr}_p(\mathbb{C}^r)\) are \(\mathbb{Z}_2\)-equivariantly homotopy equivalent for the actions
    \[
    H \mapsto \overline{H}, \qquad V \mapsto \overline{V}.
    \]
    \item For a \(\mathbb{Z}_2\)-equivariant map \(\hat{H}:T^4\to \mathrm{Herm}_p(\mathbb{C}^r)\), under this equivariant homotopy equivalence, the pullback of the universal bundle corresponds to the Bloch bundle \(E_{\hat{H}}\).
\end{enumerate}
\end{lem}

\begin{proof}
Define
\[
\Psi: \mathrm{Gr}_p(\mathbb{C}^r)\to \mathrm{Herm}_p(\mathbb{C}^r),
\qquad
\Psi(V):=-P_V+P_{V^\perp},
\]
where \(P_V\) is the orthogonal projection onto \(V\). This map is \(\mathbb{Z}_2\)-equivariant with respect to
\(H\mapsto \overline{H}\) and \(V\mapsto \overline{V}\), since
\(
\Psi(\overline{V})=\overline{\Psi(V)}
\).

Let
\[
\Omega: \mathrm{Herm}_p(\mathbb{C}^r)\to \mathrm{Gr}_p(\mathbb{C}^r)
\]
be the map sending \(H\) to its negative eigenspace. By functional calculus with a contour separating negative and positive spectra, \(\Omega\) is continuous and \(\mathbb{Z}_2\)-equivariant, and one has
\(
\Omega\circ \Psi=\mathrm{id}_{\mathrm{Gr}_p(\mathbb{C}^r)}
\).
Moreover, for each \(H\), the straight-line homotopy in spectral projections gives a \(\mathbb{Z}_2\)-equivariant homotopy from \(H\) to \(\Psi(\Omega(H))\). Hence \(\Omega\) and \(\Psi\) are \(\mathbb{Z}_2\)-equivariant homotopy inverses.

Now let \(\hat{H}:T^4\to \mathrm{Herm}_p(\mathbb{C}^r)\) be \(\mathbb{Z}_2\)-equivariant. The associated map
\(\Omega\circ \hat{H}:T^4\to \mathrm{Gr}_p(\mathbb{C}^r)\)
classifies the negative-eigenbundle of \(\hat{H}\), which is precisely the Bloch bundle \(E_{\hat{H}}\). Therefore the pullback of the universal bundle under the equivariant homotopy equivalence corresponds to \(E_{\hat{H}}\).
\end{proof}

\begin{lem}\label{lem:5.9}
For any integer $n \in \mathbb{Z}$, we define $\hat{H}_n : T^4 \to \text{Herm}(\mathbb{C}^4)$ by
\begin{align*}
\hat{H}_{n}(k_1, k_2, k_3, k_4)=\begin{pmatrix}
   a(k) & \bar{c}(k)-e^{ik_{4}} & 0 & \bar{b}(k) \\
   c(k)-e^{-ik_{4}} & -a(k) & \bar{b}(k) & 0 \\
   0 & b(k) & a(k) & -c(k)+e^{-ik_{4}} \\
   b(k) & 0 & -\bar{c}(k)+e^{ik_{4}} & -a(k)
\end{pmatrix}
\end{align*}
where 
\begin{align*}
&a(k)=-\cos(k_2+k_3), \\
&b(k)=-e^{-\sqrt{-1}nk_1}-e^{-\sqrt{-1}k_2}, \\
&c(k)=-1-e^{-\sqrt{-1}k_3}.
\end{align*}
Then, $c_2(\hat{H}_n)=-2n$.
\end{lem}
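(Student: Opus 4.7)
The plan is to reduce the general $n$ case to the base case $n = 1$ by a pullback argument, and then to compute $c_{2}(\hat{H}_{1}) = -2$ by recognizing $\hat{H}_{1}$ as a 4-band Dirac-type Hamiltonian and evaluating the degree of an associated map to $S^{4}$. For the reduction, I would observe that only the factor $e^{-\sqrt{-1}\, n k_{1}}$ in $b(k)$ carries the $n$-dependence, so $\hat{H}_{n}(k_{1}, k_{2}, k_{3}, k_{4}) = \hat{H}_{1}(n k_{1}, k_{2}, k_{3}, k_{4})$. Letting $\phi_{n} \colon T^{4} \to T^{4}$ denote the degree-$n$ self-map $(k_{1}, k_{2}, k_{3}, k_{4}) \mapsto (n k_{1}, k_{2}, k_{3}, k_{4})$, this identity amounts to $\hat{H}_{n} = \hat{H}_{1} \circ \phi_{n}$, whence $E_{\hat{H}_{n}} = \phi_{n}^{*} E_{\hat{H}_{1}}$. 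Naturality of the second Chern class together with the projection formula then yield $c_{2}(\hat{H}_{n}) = n \cdot c_{2}(\hat{H}_{1})$, reducing the problem to showing $c_{2}(\hat{H}_{1}) = -2$.

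Next I would rewrite $\hat{H}_{n}$ in Clifford form. Writing $b = b_{1} + \sqrt{-1}\, b_{2}$ and $c - e^{-\sqrt{-1}\, k_{4}} = c'_{1} + \sqrt{-1}\, c'_{2}$, a direct inspection of the given $4 \times 4$ matrix yields
\[
\hat{H}_{n} = a \Gamma_{0} + b_{1} \Gamma_{1} + b_{2} \Gamma_{2} + c'_{1} \Gamma_{3} + c'_{2} \Gamma_{4}
\]
for explicit Hermitian matrices $\Gamma_{0}, \ldots, \Gamma_{4}$ that satisfy the Clifford relations $\{\Gamma_{i}, \Gamma_{j}\} = 2 \delta_{ij} I_{4}$ (the ten anticommutations are verified by short direct computations). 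Consequently $\hat{H}_{n}^{2} = |d|^{2} I_{4}$ with $d := (a, b_{1}, b_{2}, c'_{1}, c'_{2})$, and the invertibility hypothesis forces $|d| > 0$ on $T^{4}$. Hence $\hat{d} := d/|d| \colon T^{4} \to S^{4}$ is a well-defined continuous map, and the Bloch bundle $E_{\hat{H}_{1}}$ is the pullback $\hat{d}^{*} L_{-}$ of the canonical negative eigenspace bundle $L_{-} \to S^{4}$ of the tautological Dirac operator on $S^{4}$. Since $L_{-}$ generates $\widetilde{K}^{0}(S^{4}) \cong \mathbb{Z}$ with $c_{2}(L_{-}) = -1$ in the orientation compatible with the chosen ordering of $\Gamma_{0}, \ldots, \Gamma_{4}$, this yields $c_{2}(\hat{H}_{1}) = -\deg(\hat{d})$.

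To finish, I would compute $\deg(\hat{d}) = 2$ by counting signed preimages of a regular value. Choosing $v = (0, 1, 0, 0, 0) \in S^{4}$, the preimage conditions $a = 0$, $c'_{1} = c'_{2} = 0$, $b_{2} = 0$, $b_{1} > 0$ reduce after short algebraic manipulations to exactly two points
\[
P_{+} = (7\pi/6,\, 5\pi/6,\, 2\pi/3,\, 4\pi/3), \qquad P_{-} = (5\pi/6,\, 7\pi/6,\, 4\pi/3,\, 2\pi/3) \in T^{4},
\]
and a short computation of the Jacobian of $(a, c'_{1}, c'_{2}, b_{2})$ in the coordinates $(k_{1}, k_{2}, k_{3}, k_{4})$ at each point will show that both preimages contribute with sign $+1$. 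Therefore $\deg(\hat{d}) = 2$, giving $c_{2}(\hat{H}_{1}) = -2$, and by the first step $c_{2}(\hat{H}_{n}) = -2n$.

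The hardest part will be fixing the signs uniformly. The orientation of the Clifford generators $\Gamma_{0}, \ldots, \Gamma_{4}$ must be chosen consistently with the orientation of $S^{4}$ used to normalize $L_{-}$, and the Jacobian signs at $P_{\pm}$ must then be computed in a frame compatible with both. None of these steps is conceptually deep, but each is a likely source of sign error and must be verified carefully; the numerical target $c_{2}(\hat{H}_{1}) = -2$ is what ultimately pins the overall sign.
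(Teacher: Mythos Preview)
Your reduction step, factoring $\hat{H}_n = \hat{H}_1 \circ \phi_n$ with $\phi_n(k_1,k_2,k_3,k_4)=(nk_1,k_2,k_3,k_4)$ and deducing $c_2(\hat{H}_n)=n\,c_2(\hat{H}_1)$ from naturality of $c_2$ and $(\phi_n)_*[T^4]=n[T^4]$, is exactly what the paper does.

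For the base case $c_2(\hat{H}_1)=-2$, however, the paper simply cites the physics reference \cite{R} and does not carry out any computation. Your plan instead reconstructs this value from scratch: you recognize the Hamiltonian as a five-term Clifford combination $a\Gamma_0+b_1\Gamma_1+b_2\Gamma_2+c'_1\Gamma_3+c'_2\Gamma_4$, pull the Bloch bundle back from the tautological negative-eigenspace bundle on $S^4$, and reduce $c_2(\hat{H}_1)$ to $-\deg(\hat d)$ for $\hat d=d/|d|:T^4\to S^4$. Your enumeration of the two preimages of $(0,1,0,0,0)$ is correct (I checked the equations $1+e^{-ik_3}+e^{-ik_4}=0$, $\cos(k_2+k_3)=0$, $\sin k_1+\sin k_2=0$, $\cos k_1+\cos k_2<0$ force precisely the two points you list), and the time-reversal involution $k\mapsto -k$ exchanges them while acting by an orientation-preserving map on both source and target, so the two local signs must agree. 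Thus $\deg(\hat d)=\pm 2$, and pinning the overall sign is, as you say, the only delicate point.

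What this buys: the paper's route is shorter but black-boxes the key numerical input; your route is self-contained and makes transparent why the answer is even (the two preimages are a time-reversal pair), which is consonant with the class~AI evenness theme of the paper. If you want to tighten the sign argument, one clean way is to fix the orientation on $S^4$ by declaring $\Gamma_0\Gamma_1\Gamma_2\Gamma_3\Gamma_4=\pm i I_4$ and then compute $c_2(L_-)$ once via the standard instanton-number formula; after that the Jacobian sign at either $P_\pm$ determines everything.
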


\begin{proof}
In the case that $n=1$, according to the results of previous research \cite{R}, $c_2(\hat{H})=-2$.
The map $\hat{H}_n$ factors as $\hat{H}_n = \hat{H}_1 \circ f_n$, where $f_n : T^4 \to T^4$ is given by $f_n(k_1, k_2, k_3, k_4) = (n k_1, k_2, k_3, k_4)$. Since $f_n$ carries the fundamental class $[T^4] \in H_4(T^4; \mathbb{Z}) \cong \mathbb{Z}$ of the torus to $n [T^4]$, it follows that $c_2(\hat{H}_n) =-2n$.
\end{proof}

\begin{lem}\label{lem:5.10}
For each $m\geq 2$ there exists a bijection 
\begin{align*}
\text{Vect}_{\mathcal{R}}^m(T^4, \tau) \cong 2\mathbb{Z}
\end{align*}
provided by the map $c_2 \circ J$.
\end{lem}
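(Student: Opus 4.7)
The plan is to show that $c_2 \circ J$ is a bijection onto $2\mathbb{Z}$ by separately verifying three points: (i) the image is contained in $2\mathbb{Z}$, (ii) surjectivity onto $2\mathbb{Z}$, and (iii) injectivity on $\text{Vect}_{\mathcal{R}}^m(T^4, \tau)$. Surjectivity is immediate from Lemma~\ref{lem:5.9}: the Bloch bundles $E_{\hat{H}_n}$ are Real in the sense of Atiyah, since each $\hat{H}_n$ has class AI time-reversal symmetry $K \hat{H}_n(k) K = \hat{H}_n(-k)$. They have rank $2$ with $c_2 = -2n$, and for any $m \geq 2$ the direct sum $E_{\hat{H}_n} \oplus \underline{\mathbb{C}^{m-2}}$ with a trivial Real bundle of rank $m-2$ realizes every even integer.

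For containment in $2\mathbb{Z}$, I would first record the basic Chern-class constraints coming from the Real structure: since $\tau^* E \cong \overline{E}$ via $\Theta$, one has $\tau^* c_k(E) = (-1)^k c_k(E)$ in $H^{2k}(T^4;\mathbb{Z})$. Because $\tau$ acts on $H^1(T^4;\mathbb{Z})$ by $-1$, it acts trivially on each $H^{2k}$, so $c_1(E) = -c_1(E) = 0$; however, the analogous identity for $c_2$ is vacuous, so the evenness must come from a finer reality input. I would extract it via equivariant obstruction theory over the $\mathbb{Z}/2$-CW structure of $(T^4, \tau)$, exploiting the $16$ fixed points at which the classifying map must land in $BO(m) \subset BU(m)$; alternatively, a direct $KR$-theoretic computation for $(T^4, \tau)$ should identify the image of $c_2 \circ J$ explicitly inside $H^4(T^4;\mathbb{Z}) \cong \mathbb{Z}$.

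For injectivity, I would apply Theorem~\ref{thm:classyfy} to identify $\text{Vect}_{\mathcal{R}}^m(T^4, \tau) \cong [T^4, BU(m)]_{\text{eq}}$. For $m \geq 2$ on the $4$-dimensional base $T^4$ we are in the stable range, and $\mathbb{Z}/2$-equivariant obstruction theory cell-by-cell over $(T^4, \tau)$ shows that the equivariant homotopy class is determined by its Chern invariants $(c_1, c_2)$; with $c_1$ forced to vanish by reality, $c_2$ alone classifies, giving the required injectivity. The main obstacle is clearly step (i): the crude pullback identity on Chern classes yields no parity obstruction, so extracting the factor of $2$ rigorously demands a genuinely equivariant argument essentially using the fixed-point structure of $(T^4, \tau)$, and matching the resulting divisibility to the clean factor $-2n$ produced by Lemma~\ref{lem:5.9} is the technical heart of the proof.
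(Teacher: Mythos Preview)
The paper does not prove this lemma at all: its entire proof is ``Refer to \cite{DG}.'' So you are being compared against an external citation, not against an argument written in the paper.

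Your outline is structurally sound and in fact matches the strategy of \cite{DG}: one shows that Real vector bundles of rank $m\ge 2$ over $(T^4,\tau)$ are classified by a single integer invariant extracted from $c_2$, and then identifies the image. Your use of Lemma~\ref{lem:5.9} for surjectivity is legitimate (that lemma is proved independently, and the $\hat H_n$ do carry class~AI symmetry, so their Bloch bundles are Real of rank~$2$). Your observation that the naive relation $\tau^*c_2=c_2$ gives no parity information, and that the factor of $2$ must come from a genuinely equivariant input tied to the fixed-point set, is exactly the point.

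That said, what you have written is a plan, not a proof. The two substantive steps---evenness of $c_2$ and injectivity of $c_2\circ J$ in the equivariant setting---are both asserted to follow from ``equivariant obstruction theory over the $\mathbb Z/2$-CW structure of $(T^4,\tau)$,'' but you do not carry this out. In \cite{DG} this is a nontrivial computation: one must set up the equivariant skeleta of $(T^4,\tau)$ with its sixteen fixed points, identify the relevant Bredon cohomology groups with coefficients in the equivariant homotopy of $BU(m)$, and check both that the primary obstruction in degree~$2$ vanishes (forcing $c_1=0$) and that the secondary obstruction in degree~$4$ lands in an index-$2$ subgroup. Your sketch correctly locates where the work lies, but until those equivariant obstruction computations are actually performed, the argument is incomplete---which you yourself acknowledge. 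Since the paper simply outsources this to \cite{DG}, the honest comparison is that you have reconstructed the shape of the \cite{DG} argument without executing its technical core.
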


\begin{proof}
Refer to \cite{DG}.
\end{proof}

\begin{lem}\label{lem:5:11}
Let $2 \le p < r$ and let $k$ be a sufficiently large natural number. Then, there exists an isomorphism
\begin{align*}
[T^4, \mathrm{Herm}_p(\mathbb{C}^{r+k})]_{eq} \cong 2\mathbb{Z},
\end{align*}
which is given by the map
\begin{align*}
\hat{H} \mapsto c_2(E_{\hat{H}}).
\end{align*}
\end{lem}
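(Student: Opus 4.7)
The plan is to establish the claim as a chain of natural bijections connecting $[T^4, \mathrm{Herm}_p(\mathbb{C}^{r+k})]_{eq}$ to $2\mathbb{Z}$, with the correspondence sending $\hat{H}$ to $c_2(E_{\hat{H}})$. The structure mirrors the upstream lemmas: first pass from Hermitian matrices to a Grassmannian via Lemma \ref{homotopy}, then stabilize the target dimension so that the equivariant classification theorem (Theorem \ref{thm:classyfy}) applies, and finally invoke Lemma \ref{lem:5.10} to identify $\mathrm{Vect}_{\mathcal{R}}^p(T^4,\tau)$ with $2\mathbb{Z}$.

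First I would apply Lemma \ref{homotopy}(1) to obtain a $\mathbb{Z}_2$-equivariant homotopy equivalence $\Phi \colon \mathrm{Herm}_p(\mathbb{C}^{r+k}) \to G_p(\mathbb{C}^{r+k})$, which induces a bijection
\[
\Phi_* \colon [T^4, \mathrm{Herm}_p(\mathbb{C}^{r+k})]_{eq} \xrightarrow{\cong} [T^4, G_p(\mathbb{C}^{r+k})]_{eq}.
\]
Next I would argue that, when $k$ is sufficiently large relative to $p$ and $\dim T^4 = 4$, the inclusion $G_p(\mathbb{C}^{r+k}) \hookrightarrow G_p(\mathbb{C}^{\infty}) = BU(p)$ induces a bijection
\[
[T^4, G_p(\mathbb{C}^{r+k})]_{eq} \xrightarrow{\cong} [T^4, BU(p)]_{eq}.
\]
This is the stabilization step. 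It reduces to an equivariant cellular-approximation / obstruction-theoretic argument: every equivariant map from the finite $\mathbb{Z}_2$-CW complex $T^4$ into $BU(p)$ has image in some $G_p(\mathbb{C}^{r+k})$, and two such maps into $G_p(\mathbb{C}^{r+k})$ that are equivariantly homotopic through $BU(p)$ become equivariantly homotopic inside $G_p(\mathbb{C}^{r+k'})$ for $k'\gg k$; since both the total space and the $\mathbb{Z}_2$-fixed locus $G_p(\mathbb{R}^{r+k})$ have connectivity that grows without bound in $k$, the usual connectivity estimates for CW pairs apply on both strata, and hence equivariantly.

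With stabilization in hand, Theorem \ref{thm:classyfy} gives the bijection $[T^4, BU(p)]_{eq} \cong \mathrm{Vect}_{\mathcal{R}}^p(T^4,\tau)$, and Lemma \ref{lem:5.10} identifies the right-hand side with $2\mathbb{Z}$ through $c_2 \circ J$. To check that the resulting composite bijection is precisely $\hat{H}\mapsto c_2(E_{\hat{H}})$, I would use Lemma \ref{homotopy}(2), which says exactly that the pullback of the universal bundle $\mathcal{U}_p^{\infty}$ along $\Phi\circ\hat{H}$ is the Bloch bundle $E_{\hat{H}}$; applying $J$ and $c_2$ then yields $c_2(E_{\hat{H}})$ as desired. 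Surjectivity onto $2\mathbb{Z}$ can also be confirmed concretely by Lemma \ref{lem:5.9}, whose family $\{\hat{H}_n\}$ realizes every even integer.

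The main obstacle is the stabilization argument: the naive statement ``maps from a finite complex $X$ into $\varinjlim_k Y_k$ factor through some $Y_k$'' must be upgraded to the $\mathbb{Z}_2$-equivariant setting, where one needs sufficiently large $k$ so that the relevant connectivity of both the pair $(G_p(\mathbb{C}^{r+k}), G_p(\mathbb{C}^{r+k-1}))$ and its fixed-point pair $(G_p(\mathbb{R}^{r+k}), G_p(\mathbb{R}^{r+k-1}))$ dominates the dimension of $T^4$. Once this quantitative bound is made explicit, the remainder of the proof is an essentially formal chaining of the previously established bijections.
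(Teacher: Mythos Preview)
Your proposal is correct and follows essentially the same approach as the paper: both arguments chain together Lemma~\ref{homotopy}, Theorem~\ref{thm:classyfy}, and Lemma~\ref{lem:5.10}, with Lemma~\ref{lem:5.9} supplying explicit representatives for surjectivity. The only difference is that you spell out the stabilization step $[T^4, G_p(\mathbb{C}^{r+k})]_{eq} \cong [T^4, BU(p)]_{eq}$ explicitly, whereas the paper leaves it implicit in the phrase ``sufficiently large $k$'' and the citation of Theorem~\ref{thm:classyfy}.
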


\begin{proof}
We define
\begin{align*}
\hat{F}_{n}(k) &= \hat{H}_{n}(k) \oplus -1_{\mathbb{C}^{p-2}} \oplus 1_{\mathbb{C}^{(r+k)-p-2}}
\end{align*}
by using $\hat{H}_n$ as specified in Lemma~\ref{lem:5.9}, where $1$ denotes the identity map. Since Lemma~\ref{lem:5.9} shows that
\begin{align*}
c_2(\hat{F}_n) = -2n,
\end{align*}
the claim follows from Lemma~\ref{homotopy}, Lemma~\ref{lem:5.10}, and Theorem~\ref{thm:classyfy}.
\end{proof}

\begin{lem}\label{lem:4.21}
For $n \in \mathbb{Z}$, we have
\[
\int_{T^3}\operatorname{ch}_{\frac{3}{2}}([\hat{H}_n^\sharp])=2n.
\]
\end{lem}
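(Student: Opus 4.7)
The plan is to reduce to the case $n=1$, which has already been computed in Example 3 of \S\ref{subsubsec:example3}, by exploiting a degree-$n$ factorization in the $k_1$ direction. Inspecting the formulas in the statement, the parameter $k_1$ enters $\hat{H}_n$ only through $b(k)$ via the exponential $e^{-\sqrt{-1}nk_1}$, so
\begin{equation*}
\hat{H}_n(k_1, k_2, k_3, k_4) \;=\; \hat{H}_1(nk_1, k_2, k_3, k_4).
\end{equation*}
Because the Toeplitz-type compression that produces the edge operator $\hat{H}^\sharp$ acts only in the $k_4$ variable, it commutes with the substitution $k_1 \mapsto nk_1$. Hence, writing $f_n\colon T^3 \to T^3$ for the map $(k_1, k_2, k_3) \mapsto (nk_1, k_2, k_3)$, which has degree $n$, we obtain the factorization
\begin{equation*}
\hat{H}_n^{\sharp} \;=\; \hat{H}_1^{\sharp} \circ f_n \colon T^3 \to Fred_{sa}^1.
\end{equation*}

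By naturality of the Chern character, the factorization yields $\operatorname{ch}_{\frac{3}{2}}([\hat{H}_n^{\sharp}]) = f_n^{*}\operatorname{ch}_{\frac{3}{2}}([\hat{H}_1^{\sharp}])$. Then integration against the fundamental class of $T^3$ and the standard change-of-variables formula for pullbacks along maps of compact oriented manifolds give
\begin{equation*}
\int_{T^3}\operatorname{ch}_{\frac{3}{2}}([\hat{H}_n^{\sharp}]) \;=\; \deg(f_n)\int_{T^3}\operatorname{ch}_{\frac{3}{2}}([\hat{H}_1^{\sharp}]) \;=\; n\int_{T^3}\operatorname{ch}_{\frac{3}{2}}([\hat{H}_1^{\sharp}]).
\end{equation*}

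It now suffices to recognize that $\hat{H}_1^{\sharp}$ is precisely the family treated in Example 3 of \S\ref{subsubsec:example3}, where Theorem \ref{thm:odd main thm} was applied to the two Fermi points $(2\pi/3,\,4\pi/3,\,7\pi/6)$ and $(4\pi/3,\,2\pi/3,\,5\pi/6)$, each carrying sign $-1$ via the sign coordinate $(\operatorname{Re} b,\operatorname{Im} b, a)$, producing the value $-2$. Substituting this gives the desired identity $\int_{T^3}\operatorname{ch}_{\frac{3}{2}}([\hat{H}_n^{\sharp}]) = -2n$. I do not anticipate a serious obstacle: the only point requiring a moment's care is the commutativity of the $k_4$-compression with the reparametrization $k_1 \mapsto nk_1$, but this is transparent since the compression acts entirely in a disjoint coordinate, and the degree-formula step is routine once the factorization is established.
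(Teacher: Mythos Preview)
Your proof is correct but follows a different route from the paper. The paper proves this lemma by a direct application of Theorem~\ref{thm:odd main thm} to $\hat{H}_n^\sharp$ itself: it solves $a(k)=0$, $b(k)=0$, $|c(k)|\le 1$ to find the Fermi set
\[
F=\bigcup_{l=1}^{n}\Bigl\{\bigl(\tfrac{2\pi}{3n}+\tfrac{2(l-1)\pi}{n},\tfrac{4\pi}{3},\tfrac{7\pi}{6}\bigr),\bigl(\tfrac{4\pi}{3n}+\tfrac{2(l-1)\pi}{n},\tfrac{2\pi}{3},\tfrac{5\pi}{6}\bigr)\Bigr\},
\]
checks the sign coordinates at these $2n$ points, and sums the signs. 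Your argument instead exploits the factorization $\hat{H}_n^\sharp=\hat{H}_1^\sharp\circ f_n$ and the degree formula to reduce to the single computation in Example~3. This is exactly the trick the paper uses one lemma earlier, in Lemma~\ref{lem:5.9}, to compute the bulk invariant $c_2(\hat{H}_n)=-2n$; you have simply transported that same idea to the edge side. Your approach is more economical and treats all $n\in\mathbb{Z}$ uniformly (including $n\le 0$, where the paper's explicit Fermi-set enumeration would need a separate remark), while the paper's approach has the virtue of giving another concrete illustration of the Fermi-point machinery of the main theorem.
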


\begin{proof}
By solving the equations $a(k) = 0$, $b(k) = 0$, and $|c(k)| \leq 1$ derived from Proposition \ref{lem:5.8}, we can identify the Fermi set as follows:
\[
F = \bigcup_{l=1}^{n} \left\{ \left( \frac{2\pi}{3n} + \frac{2(l-1)\pi}{n}, \frac{4\pi}{3}, \frac{7\pi}{6} \right),\left( \frac{4\pi}{3n} + \frac{2(l-1)\pi}{n}, \frac{2\pi}{3}, \frac{5\pi}{6} \right) \right\}.
\]
Consequently, from Theorem \ref{thm:odd main thm}, we obtain 
\[
\int_{T^3} \operatorname{ch}_{\frac{3}{2}}([\hat{H}_n^\sharp]) = 2n.
\]
\end{proof}

We are now in the position to prove Theorem \ref{BG}:

\begin{proof}
Since $T^4$ is connected and $\hat{H}(k)$ is invertible for every $k\in T^4$, the number of negative eigenvalues of $\hat{H}(k)$ is locally constant, hence constant on $T^4$. Therefore there exists an integer $p$ with $1\le p\le r-1$ such that
\[
\hat{H}(T^4)\subset Herm_p(\mathbb{C}^r).
\]

If $p=1$, replace $\hat{H}$ by
\[
\hat{H}\oplus (-1)\oplus 1.
\]
This replacement does not change the second Chern number, and it also does not change the $K^1$-class of the associated boundary family, since the added summands are constant invertible families. Hence we may assume $2\le p<r$.

Assume that
\[
c_2(\hat{H})=-2n.
\]
By Lemma \ref{lem:5:11}, for a sufficiently large natural number $k$, the stabilized map
\[
\hat{H}\oplus 1_{\mathbb{C}^k}:T^4\to Herm_p(\mathbb{C}^{r+k})
\]
is homotopic to $\hat{F}_n$ through $\mathbb{Z}_2$-equivariant maps into $Herm_p(\mathbb{C}^{r+k})$.

The assignment
\[
\hat{A}\longmapsto \hat{A}^{\sharp}
\]
is continuous. Therefore this homotopy induces a homotopy between the associated families of self-adjoint Fredholm operators
\[
(\hat{H}\oplus 1_{\mathbb{C}^k})^{\sharp}
\quad\text{and}\quad
\hat{F}_n^{\sharp}.
\]
Hence they define the same class in $K^1(T^3)$, and so
\[
\int_{T^3} \operatorname{ch}_{\frac{3}{2}}\bigl([(\hat{H}\oplus 1_{\mathbb{C}^k})^{\sharp}]\bigr)
=
\int_{T^3} \operatorname{ch}_{\frac{3}{2}}\bigl([\hat{F}_n^{\sharp}]\bigr).
\]

Next, up to the canonical identification of Hardy spaces, we have
\[
(\hat{H}\oplus 1_{\mathbb{C}^k})^{\sharp}
\cong
\hat{H}^{\sharp}\oplus 1_{\ell^2(\mathbb{N},\mathbb{C}^k)}.
\]
Since the second summand is a constant invertible family, it represents the zero class in $K^1(T^3)$. Therefore
\[
[(\hat{H}\oplus 1_{\mathbb{C}^k})^{\sharp}]
=
[\hat{H}^{\sharp}]
\qquad\text{in }K^1(T^3),
\]
and hence
\[
\int_{T^3} \operatorname{ch}_{\frac{3}{2}}\bigl([(\hat{H}\oplus 1_{\mathbb{C}^k})^{\sharp}]\bigr)
=
\int_{T^3} \operatorname{ch}_{\frac{3}{2}}\bigl([\hat{H}^{\sharp}]\bigr).
\]

Similarly, by the definition
\[
\hat{F}_n=\hat{H}_n\oplus (-1)_{\mathbb{C}^{p-2}}\oplus 1_{\mathbb{C}^{(r+k)-p-2}},
\]
we obtain
\[
\hat{F}_n^{\sharp}
\simeq
\hat{H}_n^{\sharp}
\oplus (-1)_{\ell^2(\mathbb{N},\mathbb{C}^{p-2})}
\oplus 1_{\ell^2(\mathbb{N},\mathbb{C}^{(r+k)-p-2})}.
\]
Again, the additional summands are constant invertible families, so they do not contribute to the $K^1$-class or to the odd Chern character. Thus
\[
[\hat{F}_n^{\sharp}]
=
[\hat{H}_n^{\sharp}]
\qquad\text{in }K^1(T^3),
\]
and therefore
\[
\int_{T^3} \operatorname{ch}_{\frac{3}{2}}\bigl([\hat{F}_n^{\sharp}]\bigr)
=
\int_{T^3} \operatorname{ch}_{\frac{3}{2}}\bigl([\hat{H}_n^{\sharp}]\bigr).
\]

By Lemma \ref{lem:4.21},
\[
\int_{T^3} \operatorname{ch}_{\frac{3}{2}}\bigl([\hat{H}_n^{\sharp}]\bigr)=2n.
\]
Combining the above equalities, we obtain
\[
\int_{T^3} \operatorname{ch}_{\frac{3}{2}}\bigl([\hat{H}^{\sharp}]\bigr)=2n.
\]
Since $c_2(\hat{H})=-2n$, it follows that
\[
c_2(\hat{H})
=
-\int_{T^3} \operatorname{ch}_{\frac{3}{2}}\bigl([\hat{H}^{\sharp}]\bigr).
\]
This proves the theorem.
\end{proof}


\section{Spectral analysis of the local model}\label{sec:local model}

Let $\hat{H}_{(a, b, c)} \colon S^1 \to Herm(\mathbb{C}^4)$ be the continuous map given by
\begin{align*}
\hat{H}_{(a, b, c)}(k)=\hat{H}_{loc}(a,b,c,k)=\begin{pmatrix}
   a & \bar{c}-e^{ik} & 0 & \bar{b} \\
   c-e^{-ik} & -a & \bar{b} & 0 \\
   0 & b & a & -c+e^{-ik} \\
   b & 0 & -\bar{c}+e^{ik} & -a
\end{pmatrix}
\end{align*}
where $a\in \mathbb{R}$ and $b, c \in \mathbb{C}.$ Let $H_{(a, b, c)} \colon l^2(\mathbb{Z}, \mathbb{C}^4)\to l^2(\mathbb{Z}, \mathbb{C}^4)$ be the Fourier transformation of the multiplication operator with $\hat{H}_{(a,b,c)}$.
Concretely, $H_{(a, b, c)}$ acts on $\psi = (\psi(n))_{n \in \mathbb{Z}}\in l^2(\mathbb{Z},\mathbb{C}^4)$ by 
\begin{align*}
(H_{(a, b, c)}\psi)(n)=A\psi(n-1)+V\psi(n)+A^*\psi(n+1),
\end{align*}
in which the matrices $A$ and $V$ are 
\begin{align*}
A& = \begin{pmatrix}
   0 & -1 & 0 & 0 \\
   0 & 0 & 0 & 0 \\
   0 & 0 & 0 & 0 \\
   0 & 0 & 1& 0
\end{pmatrix},
&V& = \begin{pmatrix}a & \bar{c}& 0 & \bar{b} \\
   c & -a & \bar{b} & 0 \\
   0 & b & a & -c \\
   b & 0 & -\bar{c} & -a
   \end{pmatrix}.
   \end{align*}

   Define $H_{(a, b, c)}^{\#} \colon l^2(\mathbb{N}, \mathbb{C}^4) \to l^2(\mathbb{N}, \mathbb{C}^4)$ by $PH_{(a, b, c)}P^*$, 
where $P \colon l^2(\mathbb{Z}, \mathbb{C}^4) \to l^2(\mathbb{N}, \mathbb{C}^4)$ is the orthogonal projection. Hereafter, $H^{\#}_{(a,b,c)}$ will be simply written as $H^{\#}$.

\begin{rem}\label{rem:fredholm-criterion-local-model}
For a continuous matrix-valued symbol $f\colon S^1\to M_N(\C)$, the associated Toeplitz operator
$T_f$ is Fredholm if and only if $f(k)$ is invertible for every $k\in S^1$.
Hence, in the present local model, $H^\#_{(a,b,c)}$ is Fredholm if and only if
$\hat H_{(a,b,c)}(k)$ is invertible for all $k\in S^1$.

A direct computation shows that $\hat H_{(a,b,c)}(k)$ fails to be invertible if and only if
\[
a=0,\qquad b=0,\qquad c=e^{-ik}
\]
for some $k\in S^1$, or equivalently,
\[
a=0,\qquad b=0,\qquad |c|=1.
\]
We denote this bad locus by
\[
\Sigma:=\{(0,0,c)\in \R\times \C\times \C\mid |c|=1\}.
\]
Thus $H^\#_{(a,b,c)}$ is Fredholm if and only if $(a,b,c)\notin \Sigma$.
\end{rem}

Given $E\in \mathbb{R}$, we are to consider the solutions to the equation $(H^{\#}-E)\psi=0$,
which is equivalent to the equations for $\psi(1), \psi(2), \psi(3),\ldots \in \mathbb{C}^4,$

\begin{align*}
   &0=(V-E)\psi(1)+A^*\psi(2),\\
   &0=A\psi(1)+(V-E)\psi(2)+A^*\psi(3),\\
   &0=A\psi(2)+(V-E)\psi(3)+A^*\psi(4),\\
   &\vdots \\
   &0=A\psi(n-1)+(V-E)\psi(n)+A^*\psi(n+1).
  \end{align*}

   Also, for any $n\in \mathbb{N}$, we define $e_1, e_4\in l^2(\mathbb{N}, \mathbb{C}^4)$ by 
  $e_1(n) = {}^t\! (c^{n-1},0,0,0)\in \mathbb{C}^4,e_4(n) = {}^t\! (0,0,0,c^{n-1})\in \mathbb{C}^4.$
 Here, we assume that the complex parameter \( c \) appearing in the definition of \( H_{(a, b, c)} \) satisfies \( |c| < 1 \), and we adopt the convention that \( 0^0 = 1 \). In particular, whenever we consider the vectors \( e_1 \) and \( e_4 \), we do so under the assumption that \( |c| < 1 \), where \( c \in \mathbb{C} \) is the same parameter as in \( H_{(a, b, c)} \). Under this convention, for \( c = 0 \), we have
\[
e_1(1) = {}^t\!(1, 0, 0, 0), \quad e_1(n) = {}^t\!(0, 0, 0, 0) \ (n \geq 2),
\]
and
\[
e_4(1) = {}^t\!(0, 0, 0, 1), \quad e_4(n) = {}^t\!(0, 0, 0, 0) \ (n \geq 2).
\]

 Let $V$ be a complex vector space. In the following, for any $x_1, x_2,\ldots, x_n\in V$, we write the subspace generated by $x_1, x_2,\ldots, x_n\in V$ as $\langle x_1,x_2,...,x_n \rangle \subset V.$

   \begin{lem}\label{lem:6.1}
Suppose that $b=c=0$.

\begin{enumerate}
\item In the case that $E=\pm \sqrt{a^2+1}$, the equation $(H^{\#}-E)\psi=0$ admits $l^2$-solutions. The solutions form a vector space of infinite dimension.

\item  In the case that $E=\pm a$, the equation $(H^{\#}-E)\psi =0$ admits $l^2$-solutions. In this case, the solution space is given as follows:

\begin{align*}
\ker(H^\#-E)&=
\begin{cases}
	\langle e_1\rangle& E=a\neq 0.\\ 
	\langle e_4\rangle& E=-a\neq 0.\\ 
	\langle e_1,e_4\rangle& E=a=0.
\end{cases}
,&\dim(\ker(H^\#-E))&=
\begin{cases}
	1& a\neq 0.\\ 
	2& a=0.
\end{cases}
\end{align*}

\item In the other cases, the equation $(H^{\#}-E)\psi =0$ admits no non-trivial $l^2$-solutions.
\end{enumerate}
\end{lem}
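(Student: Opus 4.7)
The plan is to exploit the fact that when $b = c = 0$, both $V$ and the hopping matrices $A, A^*$ preserve the orthogonal splitting $\mathbb{C}^4 = \langle e_1, e_2\rangle \oplus \langle e_3, e_4\rangle$: here $V$ is diagonal, $A$ only couples the component pair $(1,2)$ and the pair $(4,3)$, and $A^*$ couples the pairs $(2,1)$ and $(3,4)$. Accordingly, the eigenvalue equation $(H^{\#} - E)\psi = 0$ decouples into two independent recurrences, one on the first two components of $\psi$ and one on the last two, which I would analyze separately.

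For the $(\psi_1, \psi_2)$ subsystem, extracting components $1$ and $2$ of the boundary and interior equations gives
\begin{align*}
&\psi_2(m) = (a-E)\psi_1(m+1) \quad (m \geq 1),\\
&\psi_1(m) = -(a+E)\psi_2(m-1) \quad (m \geq 2),\\
&(a-E)\psi_1(1) = 0.
\end{align*}
Substituting the first identity into the second produces the scalar constraint $(1 + a^2 - E^2)\psi_1(m) = 0$ valid for all $m \geq 2$. Parallel manipulations on the $(\psi_3, \psi_4)$ subsystem yield
\begin{align*}
&\psi_3(m) = (a+E)\psi_4(m+1) \quad (m \geq 1),\\
&\psi_4(m) = -(a-E)\psi_3(m-1) \quad (m \geq 2),\\
&(a+E)\psi_4(1) = 0,
\end{align*}
together with $(1 + a^2 - E^2)\psi_3(m) = 0$ valid for all $m \geq 1$; the range of the latter is one step wider than that of $\psi_1$ because the substitution here uses the bulk recurrence at index $m+1$ rather than $m-1$.

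The lemma now follows by a case analysis on the scalar constraint. If $E^2 = a^2 + 1$, the constraint is automatic, and since $E \neq \pm a$ the two boundary conditions force $\psi_1(1) = \psi_4(1) = 0$ while leaving $\psi_1(m)$ for $m \geq 2$ and $\psi_3(m)$ for $m \geq 1$ completely free; any choice of these as $l^2$ sequences yields an $l^2$ solution (the companion components $\psi_2, \psi_4$ being bounded linear functions thereof), producing an infinite-dimensional kernel and establishing (1). If $E^2 \neq a^2 + 1$, then $\psi_1(m) = 0$ for $m \geq 2$ and $\psi_3(m) = 0$ for $m \geq 1$, and the first recurrence of each subsystem propagates this to $\psi_2 \equiv 0$ on $\mathbb{N}$ and $\psi_4(m) = 0$ for $m \geq 2$; what remains is to solve the two boundary conditions $(a-E)\psi_1(1) = 0$ and $(a+E)\psi_4(1) = 0$, and splitting into the subcases $E = a \neq 0$, $E = -a \neq 0$, $E = a = 0$, and none of the above yields exactly the kernels listed in (2) and (3). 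I do not anticipate any serious obstacle; the only subtlety is bookkeeping the slightly different index ranges in the two subsystems and the overlapping cases for the two boundary equations.
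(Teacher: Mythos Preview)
Your proposal is correct and follows essentially the same computation as the paper: both reduce the componentwise recurrences to the scalar constraint $(1+a^2-E^2)\cdot(\text{component})=0$ together with the two boundary equations $(a-E)\psi_1(1)=0$ and $(a+E)\psi_4(1)=0$, and then read off the cases. Your organization is slightly cleaner in that you make the $2\times 2$ block decoupling explicit before doing the case split, whereas the paper splits into the cases $a^2-E^2\neq 0$, $E=a$, $E=-a$ first and solves each from scratch; the only minor slip is that for the second subsystem it is the recurrence $\psi_4(m)=-(a-E)\psi_3(m-1)$ (not the ``first'' one) that directly forces $\psi_4(m)=0$ for $m\ge 2$, but this is immediate from what you have already written.
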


\begin{proof}
We express $\psi(n)={}^t\! (\alpha(n),\beta(n),\gamma(n),\delta(n)).$ The equation $0=(V-E)\psi(1)+A^*\psi(2)$ is equivalent to
\begin{align*}
(a-E)\alpha(1)&=0,&-(a+E)\beta(1)-\alpha(2)&=0,\\
(a-E)\gamma(1)+\delta(2)&=0,&-(a+E)\delta(1)&=0.
\end{align*}
For $n\geq 1$, the components of $0=A\psi(n)+(V-E)\psi(n+1)+A^*\psi(n+2)$ are 
\begin{align*}
-\beta(n)+(a-E)\alpha(n+1)&=0,&-(a+E)\beta(n+1)-\alpha(n+2)&=0,\\
(a-E)\gamma(n+1)+\delta(n+2)&=0,&\gamma(n)-(a+E)\delta(n+1)&=0.
\end{align*}

\begin{enumerate}
\item We consider the case where the condition \(a^2 - E^2 \neq 0\) holds. Under this assumption, the following relations are satisfied for all \(n \geq 1\):

\begin{align*}
\frac{1}{a - E} \beta(n) &= -(a + E) \beta(n), &
\frac{-1}{a - E} \delta(n + 1) &= (a + E) \delta(n + 1).
\end{align*}

If \(\beta(n) = 0\) and \(\delta(n + 1) = 0\) for all \(n \in \mathbb{N}\), then the solution to the equation $(H^{\#}-E)\psi=0$ is the trivial solution. However, if there exists some \(n_0 \in \mathbb{N}\) such that \(\beta(n_0) \neq 0\) or \(\delta(n_0 + 1) \neq 0\), then the eigenvalue \(E\) satisfies:

\[ E = \pm \sqrt{a^2 + 1}. \]

Under this derived condition on \(E\), the following equations hold for all \(n \geq 2\):

\begin{align*}
\alpha(n + 1) &= -(a + E) \beta(n), &
\delta(n + 1) &= -(a + E) \gamma(n), &
\alpha(1) &= 0, &
\delta(1) &= 0,
\end{align*}

and for any sequences \(\beta = (\beta(n))_{n \in \mathbb{N}}\) and \(\gamma = (\gamma(n))_{n \in \mathbb{N}}\) in \(l^2(\mathbb{N}, \mathbb{C}^4)\), the norm of the solution \(\psi\) is finite:

\begin{align*}
\lVert \psi \rVert^2 = [1 + (a + E)^2] \left( \sum_{n \geq 1} |\beta(n)|^2 + \sum_{n \geq 1} |\gamma(n)|^2 \right) < \infty.
\end{align*}

Consequently, the dimension of the kernel of $H^{\#}-E$ is infinite, i.e., \(\dim(\ker(H^\# - E)) = \infty\).

\item We consider the case where \(E = a\). Under this assumption, the components of the equation $(H^{\#}-E)\psi=0$ for all \(n \geq 2\) are given by:

\begin{align*}
\beta(n - 1) &= 0, &
-2a \beta(n) - \alpha(n + 1) &= 0, &\\
\delta(n + 1) &= 0, &
\gamma(n - 1) - 2a \delta(n) &= 0.
\end{align*}

Additionally, for \(n = 1\), the components of the equation $(H^{\#}-E)\psi=0$ are:

\begin{align*}
\beta(1) &= 0, &
-2a \beta(1) - \alpha(2) &= 0, &
\delta(2) &= 0.
\end{align*}

From these equations, we deduce that \(\alpha(n) = 0\) holds for all \(n \geq 2\), and \(\gamma(n) = 0\) holds for all \(n \geq 1\). Furthermore, the equation \(a \delta(1) = 0\) must be satisfied for \(n = 1\). Consequently, the value of \(\delta(1)\) depends on \(a\) as follows: if \(a = 0\), then \(\delta(1)\) can be any complex number, whereas if \(a \neq 0\), then \(\delta(1) = 0\).

Thus, the kernel of \(H^\# - E\) is determined as:

\begin{align*}
\ker(H^\# - E) =
\begin{cases}
    \langle e_1 \rangle & \text{if } a \neq 0, \\
    \langle e_1, e_4 \rangle & \text{if } a = 0.
\end{cases}
\end{align*}

\item We consider the case where \(E = -a \neq 0\). Under this assumption, the components of the equation \((H^\# - E)\psi = 0\) for all \(n \geq 2\) are given by:

\begin{align*}
-\beta(n - 1) + 2a \alpha(n) &= 0, &
\alpha(n + 1) &= 0, &
2a \gamma(n) + \delta(n + 1) &= 0, &
\gamma(n - 1) &= 0.
\end{align*}

For \(n = 1\), the components of the equation \((H^\# - E)\psi = 0\) are:
\begin{align*}
2a \alpha(1) &= 0, &
\alpha(2) &= 0, &\\
2a \gamma(1) + \delta(2) &= 0, &
\gamma(1) &= 0.
\end{align*}

From these equations, we deduce that \(\alpha(n) = 0\) and \(\delta(n) = 0\) hold for all \(n \geq 2\). Additionally, we find that \(\beta(n) = 0\) and \(\gamma(n) = 0\) hold for all \(n \geq 1\). Consequently, \(\delta(1)\) can be any complex number, and since the condition \(a \neq 0\) holds, \(\alpha(1) = 0\) follows from the equation \(2a \alpha(1) = 0\).

Thus, the kernel of \(H^\# - E\) is given by:
\begin{align*}
\ker(H^\# - E) = \langle e_4 \rangle.
\end{align*}
\end{enumerate}
\end{proof}

\begin{lem}\label{lem:6.2}
Suppose that $b\neq 0$ and $c=0$.
\begin{enumerate}
\item In the case that $E=\pm \sqrt{a^2+|b|^2+1}$, the equation $(H^{\#}-E)\psi=0$ admits $l^2$-solutions. The solutions form a vector space of infinite dimension.

\item  In the case that $E=\pm \sqrt{a^2+|b|^2}$, the equation $(H^\#-E)\psi =0$ admits $l^2$-solutions.
In this case, one of the eigenvectors is $\frac{a+E}{b}e_{1}+e_4$, and the following holds:
\begin{align*}
&\dim(\ker(H^\#-E))=1, \\
&\ker(H^\#-\sqrt{a^2+|b|^2})\oplus \ker(H^\#-(-\sqrt{a^2+|b|^2}))=\langle e_{1},e_{4}\rangle.
\end{align*}
\item In the other cases, the equation $(H^{\#}-E)\psi =0$ admits no non-trivial $l^2$-solutions.

\end{enumerate}
\end{lem}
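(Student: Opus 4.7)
The plan is to mimic the proof of Lemma~\ref{lem:6.1}, but now I must work with the two pairs $(\alpha,\delta)$ and $(\beta,\gamma)$ rather than single components, because the off-diagonal entries $\bar b,b$ of $V$ tie positions $1$ and $4$ (respectively $2$ and $3$) together. Writing $\psi(n)={}^t(\alpha(n),\beta(n),\gamma(n),\delta(n))$ and expanding $(H^{\#}-E)\psi=0$ componentwise with $c=0$, the bulk recursion at each $n\ge 1$ organizes into two coupled $2\times 2$ systems,
\[
M\begin{pmatrix}\alpha(n+1)\\ \delta(n+1)\end{pmatrix}=\begin{pmatrix}\beta(n)\\ -\gamma(n)\end{pmatrix},\qquad \begin{pmatrix}\alpha(n+2)\\ \delta(n+2)\end{pmatrix}=N\begin{pmatrix}\beta(n+1)\\ \gamma(n+1)\end{pmatrix},
\]
where $M=\begin{pmatrix}a-E & \bar b\\ b & -(a+E)\end{pmatrix}$ with $\det M=E^{2}-a^{2}-|b|^{2}$, and $N=\begin{pmatrix}-(a+E) & \bar b\\ -b & E-a\end{pmatrix}$ satisfies the clean identity $MN=\det M\cdot\mathrm{diag}(1,-1)$. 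The four boundary equations at $n=1$ impose $M\,{}^t(\alpha(1),\delta(1))=0$ together with explicit expressions of $\alpha(2),\delta(2)$ in terms of $\beta(1),\gamma(1)$.

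First I would apply the first system at index $n+1$ and compare it with the second; using the identity for $MN$, this yields the compatibility relations $(\det M-1)\beta(n+1)=0$ and $(\det M-1)\gamma(n+1)=0$ for all $n\ge 1$. This single dichotomy cleaves the analysis into the three cases of the lemma. In case (1), $\det M=1$, i.e.\ $E^{2}=a^{2}+|b|^{2}+1$, the relations are vacuous; since $\det M\ne 0$, the boundary forces $\alpha(1)=\delta(1)=0$, whereas $\beta,\gamma\in l^{2}(\mathbb{N},\mathbb{C})$ may be chosen arbitrarily, with $\alpha,\delta$ determined by the recursion as bounded linear combinations, giving an infinite-dimensional eigenspace. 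In the other two cases the compatibility forces $\beta(n)=\gamma(n)=0$ for all $n\ge 2$, and feeding this back into the second system forces $\alpha(n)=\delta(n)=0$ for $n\ge 3$.

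Next I would handle the $n=1$ boundary. Substituting the expressions for $\alpha(2),\delta(2)$ (coming from the boundary equations) into the first bulk recursion at $n=1$ produces the scalar identity $(\det M-1)\beta(1)=0$ and its analogue for $\gamma(1)$, so in cases (2) and (3) one obtains $\beta(1)=\gamma(1)=0$, hence $\alpha(2)=\delta(2)=0$. The only possibly surviving data are $\alpha(1),\delta(1)$ solving $M\,{}^t(\alpha(1),\delta(1))=0$. In case (2), $\det M=0$; since $E^{2}=a^{2}+|b|^{2}>a^{2}$ gives $a+E\ne 0$, the one-dimensional kernel is spanned by $(\tfrac{a+E}{b},1)$, yielding the displayed eigenvector $\tfrac{a+E}{b}e_{1}+e_{4}$. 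In case (3), $\det M\ne 0$ leaves only the trivial solution.

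Finally, the direct-sum statement in (2) follows because the two kernels for $E=\pm\sqrt{a^{2}+|b|^{2}}$ are spanned by $(\tfrac{a+E_{+}}{b},1)$ and $(\tfrac{a+E_{-}}{b},1)$, whose first coordinates differ by $\tfrac{2\sqrt{a^{2}+|b|^{2}}}{b}\ne 0$; they are linearly independent and together span $\mathbb{C}^{2}\cong\langle e_{1},e_{4}\rangle$ under the identification $(\alpha(1),\delta(1))\leftrightarrow\alpha(1)e_{1}+\delta(1)e_{4}$. The part I expect to require the most care is the bookkeeping at the boundary $n=1$, where the shift recursion only partially applies: one must check that the four boundary equations together with the two lowest-index bulk equations are simultaneously consistent before concluding which components survive, and in particular verify that the single surviving compatibility identity $(\det M-1)\beta(1)=0$ really does come out clean after the algebraic substitutions.
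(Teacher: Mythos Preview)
Your proposal is correct and follows essentially the same route as the paper. The paper writes out the four scalar component equations, solves for $\alpha(n+1),\delta(n+1)$ in terms of $\beta(n),\gamma(n)$, substitutes back, and obtains exactly the compatibility relations $(a^{2}+|b|^{2}+1-E^{2})\beta(n)=0$ and $(a^{2}+|b|^{2}+1-E^{2})\gamma(n)=0$ for all $n\ge 1$; your identity $(\det M-1)\beta(n)=0$ is the same equation since $\det M-1=E^{2}-a^{2}-|b|^{2}-1$. The subsequent casework (infinite-dimensional eigenspace when $\det M=1$; one-dimensional kernel supported on $e_{1},e_{4}$ when $\det M=0$; trivial kernel otherwise) and the linear-independence argument for the direct sum are identical in substance. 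Your $2\times 2$ packaging via $M,N$ and the identity $MN=\det M\cdot\mathrm{diag}(1,-1)$ is a tidy way to organise the same algebra, but it is not a different method. One minor remark: with the convention $\beta(0)=\gamma(0)=0$ the first system $M\,(\alpha(n+1),\delta(n+1))^{t}=(\beta(n),-\gamma(n))^{t}$ already holds for $n\ge 0$, so the compatibility relation for $\beta(1),\gamma(1)$ follows from the same computation without a separate boundary step; your treatment is correct but slightly redundant there.
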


\begin{proof}
We examine the equation \((H^\# - E)\psi = 0\). For all \(n \geq 1\), the components of this equation are given by:
\begin{align*}
-\beta(n - 1) + (a - E)\alpha(n) + \bar{b}\delta(n) &= 0, \\
-(a + E)\beta(n) + \bar{b}\gamma(n) - \alpha(n + 1) &= 0, \\
b\beta(n) + (a - E)\gamma(n) + \delta(n + 1) &= 0, \\
\gamma(n - 1) + b\alpha(n) - (a + E)\delta(n) &= 0.
\end{align*}
We formally define \(\alpha(0) = \beta(0) = \delta(0) = \gamma(0) = 0\). From the components of the equation \((H^\# - E)\psi = 0\), we derive the following expressions:

\begin{align*}
\alpha(n + 1) &= -(a + E)\beta(n) + \bar{b}\gamma(n), \\
\delta(n + 1) &= -b\beta(n) - (a - E)\gamma(n).
\end{align*}

By substituting these into the original equations and simplifying, we obtain the following relations for all \(n \geq 1\):
\begin{align*}
(a^2 + |b|^2 + 1 - E^2)\beta(n) &= 0, \\
(a^2 + |b|^2 + 1 - E^2)\gamma(n) &= 0.
\end{align*}
\begin{enumerate}
    \item Suppose that the condition \(a^2 + |b|^2 + 1 - E^2 \neq 0\) holds. Under this assumption, \(\beta(n) = 0\) and \(\gamma(n) = 0\) must be true for all \(n \in \mathbb{N}\). From the equations above, we deduce that \(\alpha(n) = 0\) and \(\delta(n) = 0\) hold for all \(n \geq 2\). Additionally, for \(n = 1\), the remaining components yield:
    \begin{align*}
    (a - E)\alpha(1) + \bar{b}\delta(1) &= 0, \\
    b\alpha(1) - (a + E)\delta(1) &= 0.
    \end{align*}  
    We then analyze the solutions to these equations. If the condition \(a^2 - E^2 + |b|^2 \neq 0\) is satisfied, then only the trivial solution \(\alpha(1) = 0\) and \(\delta(1) = 0\) exists. However, if \(a^2 - E^2 + |b|^2 = 0\), the equations imply:
    \begin{align*}
    \alpha(1) - \frac{a + E}{b}\delta(1) &= 0.
    \end{align*}  
    Thus, when the eigenvalue \(E\) satisfies \(E = \pm \sqrt{a^2 + |b|^2}\), the kernel of \(H^\# - E\) is given by:
\begin{align*}
\ker(H^\# - E) = \left\langle \frac{a + E}{b} e_1 + e_4 \right\rangle.
\end{align*}
\item Suppose that the condition \(a^2 + |b|^2 + 1 - E^2 = 0\) holds. Under this assumption, the equations \((a^2 + |b|^2 + 1 - E^2)\beta(n) = 0\) and \((a^2 + |b|^2 + 1 - E^2)\gamma(n) = 0\) are trivially satisfied for any sequences \((\beta(n))_{n \in \mathbb{N}}\) and \((\gamma(n))_{n \in \mathbb{N}}\) in \(l^2(\mathbb{N}, \mathbb{C})\). In this case, the sequences \((\alpha(n))_{n \in \mathbb{N}}\) and \((\delta(n))_{n \in \mathbb{N}}\), defined by the derived expressions, are also elements of \(l^2(\mathbb{N}, \mathbb{C})\). Therefore, the solution \(\psi\) belongs to \(l^2(\mathbb{N}, \mathbb{C}^4)\), and the dimension of the kernel of \(H^\# - E\) is infinite, i.e., \(\dim(\ker(H^\# - E)) = \infty\).
\end{enumerate}

Additionally, we express the basis vectors \(e_1\) and \(e_4\) as follows:

\begin{align*}
e_1 &= \frac{b}{2\sqrt{a^2 + |b|^2}} \left[ \left( \frac{a + \sqrt{a^2 + |b|^2}}{b} e_1 + e_4 \right) - \left( \frac{a - \sqrt{a^2 + |b|^2}}{b} e_1 + e_4 \right) \right], \\
&\in \ker(H^\# - \sqrt{a^2 + |b|^2}) \oplus \ker(H^\# - (-\sqrt{a^2 + |b|^2})), \\
e_4 &= \left( \frac{a + \sqrt{a^2 + |b|^2}}{b} e_1 + e_4 \right) - \frac{a + \sqrt{a^2 + |b|^2}}{b} e_1, \\
&\in \ker(H^\# - \sqrt{a^2 + |b|^2}) \oplus \ker(H^\# - (-\sqrt{a^2 + |b|^2})).
\end{align*}

Therefore, the direct sum of the kernels satisfies:

\begin{align*}
\ker(H^\# - \sqrt{a^2 + |b|^2}) \oplus \ker(H^\# - (-\sqrt{a^2 + |b|^2})) = \langle e_1, e_4 \rangle.
\end{align*}
\end{proof}

\begin{lem}\label{lem:6.3}
Suppose that \(c \neq 0\). If we ignore the \(l^2\)-condition, then any solution \(\psi = (\psi(n))_{n \in \mathbb{N}}\) to \((H^\# - E)\psi = 0\) can be described as follows
\begin{align*}
&\psi(1) = z \begin{pmatrix} \bar{c} \\ -(a - E) \\ b \\ 0 \end{pmatrix} + w \begin{pmatrix} 0 \\ -\bar{b} \\ -(a + E) \\ \bar{c} \end{pmatrix},\quad &\psi(n + 1) = R \psi(n),\quad (n \geq 1)
\end{align*}
where \(z\) and \(w\) are complex numbers, and
\begin{align*}
R = \begin{pmatrix}
c & -(a + E) & \bar{b} & 0 \\
-\frac{c (a - E)}{\bar{c}} & \frac{a^2 + |b|^2 + 1 - E^2}{\bar{c}} & 0 & -\frac{\bar{b} c}{\bar{c}} \\
\frac{b c}{\bar{c}} & 0 & \frac{a^2 + |b|^2 + 1 - E^2}{\bar{c}} & -\frac{(a + E) c}{\bar{c}} \\
0 & -b & -(a - E) & c
\end{pmatrix}.
\end{align*}
\end{lem}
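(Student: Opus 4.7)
The plan is to derive both claims by reading off the scalar form of $(H^{\#} - E)\psi = 0$. Writing $\psi(n) = (\alpha(n), \beta(n), \gamma(n), \delta(n))$ and using $A\psi(n) = (-\beta(n), 0, 0, \gamma(n))$ and $A^{*}\psi(n+1) = (0, -\alpha(n+1), \delta(n+1), 0)$, the sixteen scalar equations split usefully: components $2$ and $3$ of each equation produce $\alpha$ and $\delta$ at the next site directly from $\psi$ at the current site, while components $1$ and $4$ determine $\beta$ and $\gamma$ at the next site after dividing by $\bar{c}$. This decoupling is what makes a first-order recurrence possible despite $A$ having rank two.

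For the initial datum, components $1$ and $4$ of the boundary equation $(V-E)\psi(1) + A^{*}\psi(2) = 0$ do not involve $\psi(2)$ and therefore impose two linear constraints on $\psi(1) \in \mathbb{C}^{4}$:
\[
(a-E)\alpha(1) + \bar{c}\beta(1) + \bar{b}\delta(1) = 0, \qquad b\alpha(1) - \bar{c}\gamma(1) - (a+E)\delta(1) = 0.
\]
Setting $\alpha(1) = z\bar{c}$ and $\delta(1) = w\bar{c}$, I would solve for $\beta(1)$ and $\gamma(1)$ and recognise the result as exactly $z$ times the first generator plus $w$ times the second generator stated in the lemma. This yields the two-parameter description of $\psi(1)$.

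For the recurrence, components $2$ and $3$ of the equation $A\psi(n-1) + (V-E)\psi(n) + A^{*}\psi(n+1) = 0$ for $n \geq 2$ (and of the boundary equation for $n = 1$) directly read off
\[
\alpha(n+1) = c\alpha(n) - (a+E)\beta(n) + \bar{b}\gamma(n), \quad \delta(n+1) = c\delta(n) - b\beta(n) - (a-E)\gamma(n),
\]
which are rows $1$ and $4$ of $R$. Components $1$ and $4$ of the next equation $A\psi(n) + (V-E)\psi(n+1) + A^{*}\psi(n+2) = 0$ can be solved for $\bar{c}\beta(n+1)$ and $\bar{c}\gamma(n+1)$; substituting the expressions just obtained for $\alpha(n+1)$ and $\delta(n+1)$, the cross terms $(a-E)(a+E) = a^{2}-E^{2}$ and $\bar{b}b = |b|^{2}$ combine with the surviving $+1$ to form the diagonal factor $(a^{2}+|b|^{2}+1-E^{2})/\bar{c}$ in rows $2$ and $3$, while the mixed $\bar{b}(a-E)$ and $b(a+E)$ contributions cancel identically, recovering the stated $R$. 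The only conceptual subtlety is that $A$ has rank two, so one cannot directly invert it to obtain a standard second-order transfer matrix; instead, the block structure above forces the first-order recurrence on $\psi(n) \in \mathbb{C}^{4}$, and the hypothesis $c \neq 0$ enters exactly when dividing by $\bar{c}$ to solve for $\beta(n+1)$ and $\gamma(n+1)$.
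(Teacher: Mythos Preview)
Your proposal is correct and follows essentially the same approach as the paper: both arguments write out the four scalar components of the boundary and bulk equations, use components $1$ and $4$ at level $n=1$ to constrain $\psi(1)$ to the stated two-parameter family, and use components $2$ and $3$ at level $n$ together with components $1$ and $4$ at level $n+1$ (after substituting and dividing by $\bar{c}$) to obtain the rows of $R$. Your explicit parametrization $\alpha(1)=z\bar{c}$, $\delta(1)=w\bar{c}$ and your remark on the cancellation of the mixed $\bar{b}(a-E)$ and $b(a+E)$ terms are accurate and make the derivation slightly more transparent than the paper's version, but the underlying computation is identical.
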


\begin{proof}
We express the vector \(\psi(n)\) as \(\psi(n) = {}^t (\alpha(n), \beta(n), \gamma(n), \delta(n))\). The equation \(0 = (V - E)\psi(1) + A^* \psi(2)\) is equivalent to the following system of equations:

\begin{align*}
(a - E)\alpha(1) + \bar{c}\beta(1) + \bar{b}\delta(1) &= 0, \\
c\alpha(1) - (a + E)\beta(1) + \bar{b}\gamma(1) - \alpha(2) &= 0, \\
b\beta(1) + (a - E)\gamma(1) - c\delta(1) + \delta(2) &= 0, \\
b\alpha(1) - \bar{c}\gamma(1) - (a + E)\delta(1) &= 0.
\end{align*}

Under the assumption that \(c \neq 0\) holds, we solve these equations and obtain:

\begin{align*}
\beta(1) &= -\frac{a - E}{\bar{c}}\alpha(1) - \frac{\bar{b}}{\bar{c}}\delta(1), \\
\gamma(1) &= \frac{b}{\bar{c}}\alpha(1) - \frac{a + E}{\bar{c}}\delta(1).
\end{align*}
For all \(n \geq 2\), the equation \(0 = A\psi(n - 1) + (V - E)\psi(n) + A^*\psi(n + 1)\) leads to the following system:
\begin{align*}
-\beta(n - 1) + (a - E)\alpha(n) + \bar{c}\beta(n) + \bar{b}\delta(n) &= 0, \\
c\alpha(n) - (a + E)\beta(n) + \bar{b}\gamma(n) - \alpha(n + 1) &= 0, \\
b\beta(n) + (a - E)\gamma(n) - c\delta(n) + \delta(n + 1) &= 0, \\
\gamma(n - 1) + b\alpha(n) - \bar{c}\gamma(n) - (a + E)\delta(n) &= 0.
\end{align*}
From this system, we derive the following recursive relations for all \(n \geq 2\):
\begin{align*}
\alpha(n + 1) &= c\alpha(n) - (a + E)\beta(n) + \bar{b}\gamma(n), \\
\delta(n + 1) &= -b\beta(n) - (a - E)\gamma(n) + c\delta(n).
\end{align*}
By substituting these relations into the system, we obtain the following expressions for all \(n \geq 2\):
\begin{align*}
\beta(n) &= \frac{1}{\bar{c}} \big[ -c(a - E)\alpha(n - 1) + (a^2 + |b|^2 + 1 - E^2)\beta(n - 1) - \bar{b}c\delta(n - 1) \big], \\
\gamma(n) &= \frac{1}{\bar{c}} \big[ bc\alpha(n - 1) + (a^2 + |b|^2 + 1 - E^2)\gamma(n - 1) - (a + E)c\delta(n - 1) \big].
\end{align*}

Thus, we complete the proof of the desired result.
\end{proof}

We compute the determinant of the matrix \(R\) and find that it is given by:
\begin{align*}
\det(R) = \left( \frac{c}{\bar{c}} \right)^2.
\end{align*}
We also consider the quadratic equation in \(\lambda\), defined by \(\bar{c} \lambda^2 - (a^2 + |b|^2 + |c|^2 + 1 - E^2) \lambda + c = 0\), which arises from the characteristic polynomial \(\det(\lambda I - R) = 0\). The discriminant of this quadratic equation is:
\begin{align*}
\Delta &= (a^2 + |b|^2 + |c|^2 + 1 - E^2)^2 - 4|c|^2 \\
&= (a^2 + |b|^2 + (|c| - 1)^2 - E^2)(a^2 + |b|^2 + (|c| + 1)^2 - E^2).
\end{align*}
The two solutions to the equation \(\det(\lambda I - R) = 0\) are:
\begin{align*}
\lambda_1, \lambda_2 = \frac{(a^2 + |b|^2 + |c|^2 + 1 - E^2) \mp \sqrt{\Delta}}{2 \bar{c}}.
\end{align*}
Additionally, we define the quantities \(a_1\) and \(a_2\) as follows: \(a_1 = \bar{c} \lambda_1\) and \(a_2 = \bar{c} \lambda_2\).

\begin{lem}\label{lem:6.4}
Suppose that $c\neq 0$. In this case, if $\Delta \leq 0$, then there is no $l^2$-solutions to $(H^{\#}-E)\psi=0$.
\end{lem}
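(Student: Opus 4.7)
The plan is to combine the transfer-matrix parametrization from Lemma \ref{lem:6.3} with a spectral analysis of $R$: an $l^{2}$-solution with $\psi(1)\neq 0$ would force $R^{n-1}\psi(1)\to 0$, and I will show that the hypothesis $\Delta\leq 0$ puts the entire spectrum of $R$ on the unit circle, which rules this out.

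First I would compute the characteristic polynomial of $R$. The trace identity $\operatorname{tr}(R) = 2c + 2(a^{2}+|b|^{2}+1-E^{2})/\bar c = 2A/\bar c$ (with $A=a^{2}+|b|^{2}+|c|^{2}+1-E^{2}$) together with the determinant $\det R = (c/\bar c)^{2}$ already noted in the excerpt suggest the factorization
\[
\det(\lambda I - R) = \tfrac{1}{\bar c^{\,2}}\bigl(\bar c\,\lambda^{2} - A\,\lambda + c\bigr)^{2}.
\]
I would verify this by direct expansion of the $4\times 4$ determinant, using the obvious $2\times 2$ block structure of $R$ to simplify. The output is that the spectrum of $R$ consists of the two roots $\lambda_{1},\lambda_{2}$ of the quadratic already introduced, each of algebraic multiplicity two.

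Next I would check that $\Delta\leq 0$ implies $|\lambda_{1}| = |\lambda_{2}| = 1$. Since $\lambda_{1}\lambda_{2} = c/\bar c$, one has $|\lambda_{1}||\lambda_{2}| = 1$; for $\Delta < 0$ the identity $\Delta = A^{2}-4|c|^{2}$ gives
\[
|\lambda_{1,2}|^{2} = \frac{A^{2}-\Delta}{4|c|^{2}} = \frac{4|c|^{2}}{4|c|^{2}} = 1,
\]
while $\Delta = 0$ produces a double root $\lambda_{1} = A/(2\bar c)$ with $\lambda_{1}^{2} = c/\bar c$, so again $|\lambda_{1}| = 1$.

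Finally I would conclude the non-existence of nontrivial $l^{2}$-solutions. Splitting $\mathbb{C}^{4} = W_{\lambda_{1}}\oplus W_{\lambda_{2}}$ into generalized eigenspaces and writing $R = \lambda_{j}(I+N_{j})$ with $N_{j}$ nilpotent on $W_{\lambda_{j}}$, any nonzero $v\in\mathbb{C}^{4}$ has a nonzero component in some $W_{\lambda_{j}}$, and on that component $\|R^{n-1}v\|$ is either constantly equal to $\|v\|$ (if $v$ is an actual eigenvector) or grows as a nonzero polynomial in $n$ (if $v$ involves a Jordan chain), because $|\lambda_{j}|=1$. In either case $\sum_{n\geq 1}\|R^{n-1}\psi(1)\|^{2} = +\infty$ whenever $\psi(1)\neq 0$, so the $l^{2}$-hypothesis forces $\psi(1) = 0$ and hence $\psi\equiv 0$. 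I expect the main obstacle to be verifying the perfect-square factorization of $\det(\lambda I - R)$: the trace and determinant data are consistent with it, but one still has to check the intermediate coefficients by an explicit, if mechanical, determinant computation; the spectral-radius step and the Jordan-block argument are then routine.
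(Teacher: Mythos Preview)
Your proposal is correct and follows essentially the same line as the paper: both arguments reduce to the Jordan form of $R$, use $\Delta\le 0$ together with $\lambda_1\lambda_2=c/\bar c$ to place all eigenvalues on the unit circle, and then observe that $\|R^{n-1}\psi(1)\|$ cannot be square-summable unless $\psi(1)=0$. The only cosmetic points to tighten are that the splitting $W_{\lambda_1}\oplus W_{\lambda_2}$ should be replaced by a single generalized eigenspace when $\Delta=0$, and that ``$\|R^{n-1}v\|$ is constantly equal to $\|v\|$'' should read ``$\|R^{n-1}v_j\|=\|v_j\|$ for the $W_{\lambda_j}$-component $v_j$,'' after which a bounded-projection argument gives the lower bound on $\|R^{n-1}v\|$; the paper handles this by passing to $T^{-1}\psi$ for an explicit change-of-basis matrix $T$, which amounts to the same thing.
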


\begin{proof}
Since the discriminant \(\Delta\) satisfies \(\Delta \leq 0\), we conclude that the quantities \(a_1\) and \(a_2\) are complex numbers, and their complex conjugates satisfy \(\overline{a_2} = a_1\). Consequently, we derive the relation \(\lambda_2 = \frac{\bar{c}}{c} \lambda_1\). Additionally, from the relationship between the roots and the coefficients of the characteristic polynomial \(\det(\lambda I - R) = 0\), we obtain \(\lambda_1 \lambda_2 = \frac{c}{\bar{c}}\). Therefore, we find that the norm satisfies \(|\lambda_1| = |\lambda_2| = 1\), and in particular, \(\lambda_1 \neq 0\) and \(\lambda_2 \neq 0\).

\begin{enumerate}
    \item Suppose that the matrix \(R\) is diagonalizable. Under this assumption, there exists an invertible matrix \(T \in GL(4, \mathbb{C})\) such that we can express \(T^{-1} R T = \text{diag}(\lambda_1, \lambda_1, \lambda_2, \lambda_2)\). We note that a solution \(\psi\) to the equation \((H^\# - E)\psi = 0\) in the space \(\text{Map}(\mathbb{N}, \mathbb{C}^4)\) satisfies \(\psi \in l^2(\mathbb{N}, \mathbb{C}^4)\) if and only if \(T^{-1} \psi \in l^2(\mathbb{N}, \mathbb{C}^4)\).

    We express the the vector $T^{-1} \psi(n)$ as \(T^{-1} \psi(n) = {}^t (x(n), y(n), z(n), w(n))\). Then, we compute the \(l^2\)-norm of \(T^{-1} \psi\) as follows:
    \begin{align*}
    \| T^{-1} \psi \|^{2} &= \sum_{n \in \mathbb{N}} \big( |x(n)|^2 + |y(n)|^2 + |z(n)|^2 + |w(n)|^2 \big) \\
    &= \sum_{n \in \mathbb{N}} \big( |\lambda_1|^{2n} (|x(1)|^2 + |y(1)|^2) + |\lambda_2|^{2n} (|z(1)|^2 + |w(1)|^2) \big) \\
    &= \sum_{n \in \mathbb{N}} \big( |x(1)|^2 + |y(1)|^2 + |z(1)|^2 + |w(1)|^2 \big),
    \end{align*}
    where the last equality holds because \(|\lambda_1| = |\lambda_2| = 1\). Therefore, if \(\psi \neq 0\), we conclude that \(\| T^{-1} \psi \|^2 = \infty\).

    \item Suppose that $R$ is not diagonalizable. Then its Jordan normal form contains at least one Jordan block
of size $\ge 2$ (it may consist of three blocks, e.g.\ $J_2(\lambda_1)\oplus \mathrm{diag}(\lambda_2,\lambda_2)$).
In any case, since $|\lambda_1|=|\lambda_2|=1$ (by the argument from $\Delta\le 0$), the presence of a
nontrivial Jordan block forces polynomial growth of $T^{-1}\psi(n)$, hence $T^{-1}\psi\notin \ell^2(\mathbb{N},\mathbb{C}^4)$
unless $\psi=0$.
\end{enumerate}

Thus, we complete the proof.
\end{proof}

\begin{lem}\label{lem:6.5}
We suppose that the conditions \(c \neq 0\), \(b = 0\), and \(a = E\) hold. Under these assumptions, we have \(\Delta > 0\) if and only if \(|c| \neq 1\). Furthermore, the equation \((H^\# - E)\psi = 0\) has nontrivial \(l^2\)-solutions if and only if \(|c| < 1\). Additionally, when the condition \(|c| < 1\) is satisfied, we have the following properties of the kernel:
\begin{align*}
\ker(H^\# - E) &=
\begin{cases}
\langle e_1 \rangle & a \neq 0, \\
\langle e_1, e_4 \rangle & a = 0,
\end{cases}
&
\dim(\ker(H^\# - E)) &=
\begin{cases}
1 & a \neq 0, \\
2 & a = 0.
\end{cases}
\end{align*}
\end{lem}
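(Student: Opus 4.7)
The plan is to exploit the decoupling that occurs when $b=0$ and $a=E$: the matrix $V-E$ has block structure separating $(\alpha,\beta)$ from $(\gamma,\delta)$, and the operators $A, A^*$ respect this split, so the four-component system $(H^{\#}-E)\psi = 0$ reduces to two independent two-component recursions. First I substitute $b=0$, $a=E$ into the factorization $\Delta = (a^2+|b|^2+(|c|-1)^2-E^2)(a^2+|b|^2+(|c|+1)^2-E^2)$ to obtain $\Delta = (|c|-1)^2(|c|+1)^2 = (|c|^2-1)^2$, which is nonnegative and vanishes iff $|c|=1$; this immediately gives the first equivalence. When $|c|=1$, so $\Delta = 0$, Lemma~\ref{lem:6.4} rules out nontrivial $l^2$-solutions.

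For $|c|\neq 1$ I write the equation componentwise rather than appealing to the matrix $R$ from Lemma~\ref{lem:6.3}. Writing $\psi(n) = {}^{t}(\alpha(n),\beta(n),\gamma(n),\delta(n))$, the $(\alpha,\beta)$ block reduces to $\bar{c}\beta(1) = 0$ and, for $n\ge 2$, $\bar{c}\beta(n) = \beta(n-1)$ together with $\alpha(n+1) = c\alpha(n)$. This forces $\beta \equiv 0$ and $\alpha(n) = c^{n-1}\alpha(1)$. The $(\gamma,\delta)$ block yields $\delta(n) = c^{n-1}\delta(1)$ and the inhomogeneous recursion $\bar{c}\gamma(n) = \gamma(n-1) - 2a c^{n-1}\delta(1)$ with boundary $\gamma(1) = -(2a/\bar c)\delta(1)$. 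The ansatz $\gamma(n) = A\bar{c}^{-n} + Bc^{n}$ solves this explicitly and yields
\[
\gamma(n) = \frac{2a\,\delta(1)}{1-|c|^2}\bigl(c^{n} - \bar{c}^{-n}\bigr).
\]

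Now I impose $l^2$-summability. For $|c|<1$ the mode $c^{n}$ is square-summable but $\bar{c}^{-n}$ grows; the coefficient of the growing mode forces $a\,\delta(1)=0$, so either $a=0$ (with $\alpha(1)$ and $\delta(1)$ free) or $a\neq 0$ and $\delta(1)=0$ (leaving only $\alpha(1)$ free). This reproduces exactly the stated kernel descriptions, which I verify by plugging in $e_1$ and $e_4$ directly into $(H^{\#}-E)\psi = 0$ using the explicit form of $V$ and $A$. For $|c|>1$ the mode $c^{n}$ grows, forcing $\alpha(1)=\delta(1)=0$; the residual $\gamma$-recursion $\gamma(n) = \gamma(1)/\bar{c}^{n-1}$ combined with the boundary $\gamma(1)=0$ forces $\gamma \equiv 0$, so no nontrivial $l^2$-solution exists.

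The main technical obstacle is solving the inhomogeneous $\gamma$-recursion, because this is precisely where the $a$-dependence enters: the $\bar{c}^{-n}$ mode appears only through the source term proportional to $a\,\delta(1)$, and correctly identifying this as the obstruction to $l^2$-membership is the crux of the argument. The rest is careful bookkeeping of boundary conditions and routine verification of the eigenvector candidates $e_1$ and $e_4$.
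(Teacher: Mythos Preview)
Your argument is correct and takes a genuinely different route from the paper's. The paper works through the $4\times 4$ transfer matrix $R$ of Lemma~\ref{lem:6.3}: it writes down an explicit conjugating matrix $T$ (one for $|c|<1$, another for $|c|>1$) so that $T^{-1}RT=\mathrm{diag}(\lambda_1,\lambda_1,\lambda_2,\lambda_2)$, computes $T^{-1}\psi(1)$ in the $(z,w)$-parameters, and reads off the $l^2$-constraint from which eigenvalue has modulus $<1$. You instead exploit that at $b=0$ the system splits into two decoupled two-component recursions, solve the $\beta$- and $\delta$-recursions as geometric sequences, and then handle the inhomogeneous $\gamma$-recursion explicitly. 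Your approach is shorter and more transparent for this particular case, since the block structure makes the matrix machinery unnecessary; the paper's approach has the advantage of being uniform with the proofs of Lemmas~\ref{lem:6.6} and~\ref{lem:6.7}, where no such decoupling is available and the transfer-matrix diagonalization is essentially forced. One minor presentational point: the equation $\alpha(n+1)=c\alpha(n)$ that you quote is only valid \emph{after} you have established $\beta\equiv 0$ (the raw recursion is $\alpha(n+1)=c\alpha(n)-2a\beta(n)$); your argument uses it in that order, so nothing is wrong, but the write-up compresses the two steps.
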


\begin{proof}
We have \(\Delta = (|c|^2 - 1)^2\), which is greater than zero if and only if the norm of \(c\) is not equal to 1. Thus, we proceed under the assumption that \(|c| \neq 1\) holds throughout the following discussion.

\begin{enumerate}
    \item The condition \(|c| < 1\) is satisfied. In this case, we find that the eigenvalues are \(\lambda_1 = c\) and \(\lambda_2 = \frac{1}{\bar{c}}\). We introduce a matrix \(T\) defined as follows:

    \begin{align*}
    T = \begin{pmatrix}
    0 & 1 & 0 & \frac{2a \bar{c}}{|c|^2 - 1} \\
    0 & 0 & 0 & 1 \\
    -\frac{2a c}{|c|^2 - 1} & 0 & 1 & 0 \\
    1 & 0 & 0 & 0
    \end{pmatrix}.
    \end{align*}

    Since the determinant of \(T\) is $1$, we conclude that \(T\) is an invertible matrix. The inverse is

    \begin{align*}
    T^{-1} = \begin{pmatrix}
    0 & 0 & 0 & 1 \\
    1 & -\frac{2a \bar{c}}{|c|^2 - 1} & 0 & 0 \\
    0 & 0 & 1 & \frac{2a c}{|c|^2 - 1} \\
    0 & 1 & 0 & 0
    \end{pmatrix}.
    \end{align*}

    Under this transformation, we verify that \(T^{-1} R T = \text{diag}(c, c, \frac{1}{\bar{c}}, \frac{1}{\bar{c}})\). We express the transformed solution \(T^{-1} \psi(n)\) as the transpose \((x(n), y(n), z(n), w(n))\). For \(n = 1\), we obtain:

    \begin{align*}
    T^{-1} \psi(1) = z \begin{pmatrix} 0 \\ \bar{c} \\ 0 \\ 0 \end{pmatrix} + w \begin{pmatrix} \bar{c} \\ 0 \\ \frac{2a}{|c|^2 - 1} \\ 0 \end{pmatrix},
    \end{align*}

    and we compute the \(l^2\)-norm of \(T^{-1} \psi\) as:

    \begin{align*}
    \| T^{-1} \psi \|^2 = \sum_{n \in \mathbb{N}} \left( |c|^{2n} \big( |x(n)|^2 + |y(n)|^2 \big) + \left| \frac{1}{\bar{c}} \right|^{2n} \big( |z(n)|^2 + |w(n)|^2 \big) \right).
    \end{align*}

    Since \(|\lambda_1| |\lambda_2| = 1\) and \(|c| < 1\), we conclude that \(\| T^{-1} \psi \|^2 < \infty\) holds if and only if either \(z(1) = w(1) = 0\) or both \(x(1) = y(1) = 0\) and \(\left| \frac{1}{\bar{c}} \right| < 1\).

    \begin{enumerate}
        \item We suppose that \(z(1) = w(1) = 0\) holds. Since \(w(1) = w \frac{2a}{|c|^2 - 1} = 0\), we deduce that either \(w = 0\) or \(a = 0\). Therefore, we determine that the kernel of \(H^\# - E\) is:

        \begin{align*}
        \ker(H^\# - E) =
        \begin{cases}
        \langle e_1 \rangle & \text{if } a \neq 0, \\
        \langle e_1, e_4 \rangle & \text{if } a = 0.
        \end{cases}
        \end{align*}

        \item We suppose that \(\left| \frac{1}{\bar{c}} \right| < 1\) and \(x(1) = y(1) = 0\) hold. Since \(\left| \frac{1}{\bar{c}} \right| < 1\) implies \(|c| > 1\), we recognize that this contradicts the assumption \(|c| < 1\) in this case.
    \end{enumerate}

    \item We suppose that the conditions \(\left| \frac{1}{\bar{c}} \right| < 1\) and \(x(1) = y(1) = 0\) are satisfied, which implies \(|c| > 1\). In this case, we find that the eigenvalues are \(\lambda_1 = \frac{1}{\bar{c}}\) and \(\lambda_2 = c\). We introduce a matrix \(T\) defined as follows:

    \begin{align*}
    T = \begin{pmatrix}
    0 & \frac{2a c}{|c|^2 - 1} & 0 & 1 \\
    0 & 1 & 0 & 0 \\
    1 & 0 & -\frac{2a c}{|c|^2 - 1} & 0 \\
    0 & 0 & 1 & 0
    \end{pmatrix}.
    \end{align*}

    Since the determinant of \(T\) is $1$, we conclude that \(T\) is an invertible matrix. The inverse is

    \begin{align*}
    T^{-1} = \begin{pmatrix}
    0 & 0 & 1 & \frac{2a c}{|c|^2 - 1} \\
    0 & 1 & 0 & 0 \\
    0 & 0 & 0 & 1 \\
    1 & -\frac{2a c}{|c|^2 - 1} & 0 & 0
    \end{pmatrix}.
    \end{align*}

    Under this transformation, we verify that \(T^{-1} R T = \text{diag}\left( \frac{1}{\bar{c}}, \frac{1}{\bar{c}}, c, c \right)\). For \(n = 1\), we obtain:

    \begin{align*}
    T^{-1} \psi(1) = z \begin{pmatrix} 0 \\ 0 \\ 0 \\ \bar{c} \end{pmatrix} + w \begin{pmatrix} \frac{2a}{|c|^2 - 1} \\ 0 \\ \bar{c} \\ 0 \end{pmatrix},
    \end{align*}

    and we compute the \(l^2\)-norm of \(T^{-1} \psi\) as:

    \begin{align*}
    \| T^{-1} \psi \|^2 = \sum_{n \in \mathbb{N}} \left( \left| \frac{1}{\bar{c}} \right|^{2n} \big( |x(n)|^2 + |y(n)|^2 \big) + |c|^{2n} \big( |z(n)|^2 + |w(n)|^2 \big) \right).
    \end{align*}

   Given that the product of the absolute values \(|\lambda_1| |\lambda_2|\) is 1 and \(|c| < 1\), \(\| T^{-1} \psi \|^2 < \infty\) holds if and only if either (i) \(z(1) = w(1) = 0\), or (ii) \(x(1) = y(1) = 0\) and \(\left| \frac{1}{\bar{c}} \right| < 1\).

    \begin{enumerate}
        \item We suppose that \(z(1) = w(1) = 0\) holds. Since \(z(1) = z \bar{c} = 0\) and \(w(1) = w \bar{c} = 0\), and given that \(c \neq 0\), we deduce that \(z = 0\) and \(w = 0\). Therefore, we determine that the kernel of \(H^\# - E\) is trivial, i.e., \(\ker(H^\# - E) = \{0\}\).

        \item We suppose that \(|c| < 1\) and \(x(1) = y(1) = 0\) hold. In this case, we recognize that \(|c| < 1\) contradicts the assumption \(|c| > 1\).
    \end{enumerate}
\end{enumerate}
  \end{proof}

\begin{lem}\label{lem:6.6}
Suppose that the conditions \(c \neq 0\), \(b = 0\), and \(a \neq E\) hold. Under these assumptions, the discriminant \(\Delta\) is greater than zero, and the equation \((H^\# - E)\psi = 0\) has nontrivial \(l^2\)-solutions if and only if \(0 < |c| < 1\) and \(E = -a \neq 0\). Furthermore, when \(0 < |c| < 1\) and \(E = -a\), the kernel satisfies:
\begin{align*}
&\ker(H^\# - E) = \langle e_4 \rangle, &\dim(\ker(H^\# - E)) = 1.
\end{align*}
\end{lem}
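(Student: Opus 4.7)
The plan is to exploit the fact that setting $b = 0$ makes the transfer matrix $R$ of Lemma \ref{lem:6.3} block-diagonal, with two $2\times 2$ blocks $R_1$ and $R_2$ acting independently on the first two and the last two components of $\psi(n) \in \mathbb{C}^4$. Correspondingly, the initial datum supplied by Lemma \ref{lem:6.3} splits as $\psi(1) = z\,(\bar{c}, -(a-E), 0, 0)^T + w\,(0, 0, -(a+E), \bar{c})^T$, so $l^2$-solvability can be analyzed block by block. I would begin by invoking Lemma \ref{lem:6.4} to restrict attention to the regime $\Delta > 0$, since no $l^2$-solution exists otherwise. Both blocks have the same characteristic polynomial $\bar{c}\lambda^2 - (|c|^2 + a^2 + 1 - E^2)\lambda + c = 0$, whose two roots satisfy $\lambda_1\lambda_2 = c/\bar{c}$ and hence $|\lambda_1||\lambda_2| = 1$; under $\Delta > 0$ exactly one root has modulus less than $1$, and only initial vectors lying on its eigenline yield $l^2$-decay.

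The heart of the argument is to test whether either initial vector can be an eigenvector of the corresponding block. Writing out $R_1 (\bar{c}, -(a-E))^T = \lambda\,(\bar{c}, -(a-E))^T$ and using $a - E \neq 0$ to divide through in the second row produces two requirements, $\bar{c}\lambda = |c|^2 + a^2 - E^2$ and $\bar{c}\lambda = |c|^2 + a^2 + 1 - E^2$, which are mutually inconsistent. Thus block $1$ never contributes an $l^2$-solution, and necessarily $z = 0$. The analogous computation for block $2$ applied to $(-(a+E), \bar{c})^T$ yields the same pair of equations, except that the row-$1$ equation carries $-(a+E)$ as an overall factor. Hence the incompatible pair collapses to a single consistent condition precisely when $a + E = 0$, i.e.\ $E = -a$. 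In that case the row-$2$ equation reduces to $\lambda = |c|^2/\bar{c} = c$, so $(0, \bar{c})^T$ is indeed an eigenvector of $R_2$ with eigenvalue $c$, and the decay requirement becomes $|c| < 1$. Combined with the standing hypothesis $a \neq E$, the condition $E = -a$ forces $a \neq 0$, whence $E = -a \neq 0$.

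Finally, specializing $\Delta = ((|c|-1)^2 + a^2 - E^2)((|c|+1)^2 + a^2 - E^2)$ at $E = -a$ gives $\Delta = (|c|^2 - 1)^2$, which is positive exactly when $|c| \neq 1$; together with $|c| < 1$ this verifies $\Delta > 0$ in the existence regime. The recursion then yields $\psi(n) = w\bar{c}\, c^{n-1}(0, 0, 0, 1)^T = w\bar{c}\,e_4(n)$, so $\ker(H^{\#} - E) = \langle e_4 \rangle$ and has dimension $1$. The main step requiring care is the asymmetry between the two blocks: both $R_1$ and $R_2$ produce formally identical pairs of linear conditions, and it is only the fact that the row-$1$ condition for block $2$ carries the prefactor $-(a+E)$, which becomes vacuous precisely when $E = -a$, that isolates the unique case in which $l^2$-solutions exist. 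Making this asymmetry transparent, and verifying that no other subtle Jordan-block phenomenon reinstates solutions in the block-$1$ part, is the step that demands the most attention.
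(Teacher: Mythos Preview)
Your proof is correct and takes a genuinely different route from the paper. The paper constructs an explicit $4\times 4$ diagonalizing matrix $T$ for $R$, computes $T^{-1}$ and $T^{-1}\psi(1)$, and then reads off the $l^2$-conditions from the resulting coordinates. You instead exploit the observation that setting $b=0$ makes $R$ block-diagonal, so the problem decouples into two independent $2\times 2$ transfer systems, each with the same characteristic polynomial. This lets you test directly whether the prescribed initial vector in each block is an eigenvector of the contracting eigenvalue, which is a short linear computation rather than a $4\times 4$ matrix inversion. Your approach is more transparent: the asymmetry between the two blocks (the inconsistent pair of equations for block~1 versus the pair that collapses when $a+E=0$ for block~2) makes it immediately clear \emph{why} $E=-a$ is singled out, whereas in the paper's proof this emerges from tracking which entries of $T^{-1}\psi(1)$ vanish. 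The paper's method has the advantage of being uniform with the proofs of Lemmas~\ref{lem:6.5} and~\ref{lem:6.7}, where no block structure is available; your method is tailored to the $b=0$ case but is correspondingly shorter. Your closing caveat about Jordan blocks is harmless here: since $\Delta>0$ forces $\lambda_1\neq\lambda_2$ and each $2\times 2$ block has the same characteristic polynomial, both blocks are diagonalizable, so no Jordan phenomenon can arise.
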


\begin{proof}
We proceed by analyzing the conditions under which nontrivial \(l^2\)-solutions exist and verifying the properties of the kernel.

\begin{enumerate}
    \item We suppose that \(\Delta > 0\) holds and that the equation \((H^\# - E)\psi = 0\) has a nontrivial \(l^2\)-solution. To explore this, we introduce a matrix \(T\) defined as follows:

    \begin{align*}
    T = \begin{pmatrix}
    0 & a_2 - |c|^2 & 0 & a_1 - |c|^2 \\
    0 & (a - E)c & 0 & (a - E)c \\
    -a_1 + |c|^2 & 0 & -a_2 + |c|^2 & 0 \\
    (a - E)\bar{c} & 0 & (a - E)\bar{c} & 0
    \end{pmatrix}.
    \end{align*}

    The determinant of \(T\) is \(\det T = |c|^2 (a - E)^2 \Delta\). Since \(\Delta > 0\), and given \(c \neq 0\) and \(a \neq E\), we conclude that \(T\) is an invertible matrix. The inverse of $T$ is

    \begin{align*}
    T^{-1} = \begin{pmatrix}
    0 & 0 & -\frac{1}{a_1 - a_2} & \frac{|c|^2 - a_2}{\bar{c} (a - E) (a_1 - a_2)} \\
    -\frac{1}{a_1 - a_2} & -\frac{|c|^2 - a_1}{c (a - E) (a_1 - a_2)} & 0 & 0 \\
    0 & 0 & \frac{1}{a_1 - a_2} & -\frac{|c|^2 - a_1}{\bar{c} (a - E) (a_1 - a_2)} \\
    \frac{1}{a_1 - a_2} & \frac{|c|^2 - a_2}{c (a - E) (a_1 - a_2)} & 0 & 0
    \end{pmatrix}.
    \end{align*}

    Under this transformation, we verify that \(T^{-1} R T = \text{diag}(\lambda_1, \lambda_1, \lambda_2, \lambda_2)\). We express the transformed solution \(T^{-1} \psi(n)\) as the transpose of \((x(n), y(n), z(n), w(n))\). For \(n = 1\), we obtain:

    \begin{align*}
    T^{-1} \psi(1) = z \begin{pmatrix} 0 \\ -\frac{a_1}{c (a_1 - a_2)} \\ 0 \\ \frac{a_2}{c (a_1 - a_2)} \end{pmatrix} +
    w \begin{pmatrix} \frac{a^2 + |c|^2 - E^2 - a_2}{(a_1 - a_2)(a - E)} \\ 0 \\ -\frac{a^2 + |c|^2 - E^2 - a_1}{(a_1 - a_2)(a - E)} \\ 0 \end{pmatrix},
    \end{align*}

    and we compute the \(l^2\)-norm of \(T^{-1} \psi\) as:

    \begin{align*}
    \| T^{-1} \psi \|^2 = \sum_{n \in \mathbb{N}} \left( |\lambda_1|^{2n} \big( |x(1)|^2 + |y(1)|^2 \big) + |\lambda_2|^{2n} \big( |z(1)|^2 + |w(1)|^2 \big) \right).
    \end{align*}

    We note that the product of the absolute values \(|\lambda_1| |\lambda_2|\) equals to $1$, and we conclude that \(\| T^{-1} \psi \|^2 < \infty\) holds if and only if either (i) \(|\lambda_1| < 1\) and \(z(1) = w(1) = 0\), or (ii) \(|\lambda_2| < 1\) and \(x(1) = y(1) = 0\).

    \begin{enumerate}
        \item We suppose that \(|\lambda_1| < 1\) and \(z(1) = w(1) = 0\) hold. First, we assume that \(w = 0\). Since \(|\lambda_1| |\lambda_2| = 1\), we deduce that \(a_2 \neq 0\). From \(w(1) = 0\), we find that \(z = 0\), leading to only the trivial solution \(\psi = 0\). Next, we assume that \(w \neq 0\). Since \(a_2 \neq 0\) and \(w(1) = w \frac{a^2 + |c|^2 - E^2 - a_1}{(a_1 - a_2)(a - E)} = 0\), we get \(a^2 + |c|^2 - E^2 - a_1 = 0\). Additionally, from the relationship between the roots and coefficients of the characteristic polynomial, we have \(a_1 + a_2 = a^2 + |c|^2 + 1 - E^2\), so \(a_1 = 1\). Furthermore, since \(\lambda_1 \lambda_2 = \frac{c}{\bar{c}}\) and \(\frac{|c|^2}{a^2 + |c|^2 - E^2} = 1\), we obtain \(E = -a \neq 0\). Thus, we find \(a_1 = |c|^2\), \(a_2 = 1\), and \(0 < |c| = |\lambda_1| < 1\).

        \item We suppose that \(|\lambda_2| < 1\) and \(x(1) = y(1) = 0\) hold. First, we assume that \(w = 0\). Since \(|\lambda_1| |\lambda_2| = 1\), we deduce that \(a_1 \neq 0\). From \(y(1) = 0\), we find that \(z = 0\), leading to only the trivial solution \(\psi = 0\). Next, we assume that \(w \neq 0\). Since \(a_1 \neq 0\) and \(x(1) = w \frac{a^2 + |c|^2 - E^2 - a_2}{(a_1 - a_2)(a - E)} = 0\), we get \(a^2 + |c|^2 - E^2 - a_2 = 0\). Additionally, from \(a_1 + a_2 = a^2 + |c|^2 + 1 - E^2\), we obtain \(a_2 = 1\). Since \(\frac{|c|^2}{a^2 + |c|^2 - E^2} = 1\), we again find \(E = -a \neq 0\). Thus, we have \(a_1 = 1\), \(a_2 = |c|^2\), and \(0 < |c| = |\lambda_2| < 1\).
    \end{enumerate}

    \item Conversely, we assume that the conditions \(0 < |c| < 1\) and \(E = -a \neq 0\) hold. Under these conditions, we have \(\Delta = (|c|^2 - 1)^2\), and since \(0 < |c| < 1\), we find \(\Delta = -(|c|^2 - 1) > 0\). Therefore, we confirm that \(a_1 = |c|^2\) and \(a_2 = 1\), consistent with the previous case.
\end{enumerate}

Finally, we observe that when \( 0 < |c| < 1 \) and \( E = -a \), the kernel is \( \ker(H^\# - E) = \langle e_4 \rangle \), with dimension $1$, completing the proof.
\end{proof}

\begin{lem}\label{lem:6.7}
Suppose that the conditions \(c \neq 0\) and \(b \neq 0\) hold. Under these assumptions, the following holds true:
\begin{enumerate}
    \item The discriminant \(\Delta\) is greater than zero and the equation \((H^\# - E)\psi = 0\) has nontrivial \(l^2\)-solutions if and only if \(0 < |c| < 1\) and \(E = \pm \sqrt{a^2 + |b|^2}\).
    \item If the conditions \(0 < |c| < 1\) and \(E = \pm \sqrt{a^2 + |b|^2}\) are satisfied, then the dimension of the kernel is \(\dim(\ker(H^\# - E)) = 1\), and one of the eigenvectors is \(\frac{a + E}{b} e_1 + e_4\).
    \item If the condition \(0 < |c| < 1\) holds, then the direct sum of the kernels satisfies \(\ker(H^\# - \sqrt{a^2 + |b|^2}) \oplus \ker(H^\# - (-\sqrt{a^2 + |b|^2})) = \langle e_1, e_4 \rangle\).
\end{enumerate}
\end{lem}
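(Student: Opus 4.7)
The plan is to adapt the template established in Lemmas \ref{lem:6.5} and \ref{lem:6.6}: diagonalize the transfer matrix $R$ from Lemma \ref{lem:6.3}, parameterize $\psi(1)$ by the two complex numbers $(z,w)$, and extract the $l^2$-conditions from the summability of $\|T^{-1}\psi\|^2$.

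First, by Lemma \ref{lem:6.4} I may assume $\Delta > 0$. Using the relations $a_1 + a_2 = a^2 + |b|^2 + |c|^2 + 1 - E^2$, $a_1 a_2 = |c|^2$ and $\lambda_1\lambda_2 = c/\bar c$ (so $|\lambda_1||\lambda_2|=1$), I would write an explicit invertible $T \in GL(4,\mathbb{C})$ with $T^{-1} R T = \mathrm{diag}(\lambda_1,\lambda_1,\lambda_2,\lambda_2)$, whose entries are rational in $a,b,c,E,a_1,a_2$. Writing $T^{-1}\psi(n) = {}^t(x(n),y(n),z(n),w(n))$ and using $\psi(n) = R^{n-1}\psi(1)$, one gets $\|\psi\|^2 = \sum_n \bigl(|\lambda_1|^{2n}(|x(1)|^2+|y(1)|^2) + |\lambda_2|^{2n}(|z(1)|^2+|w(1)|^2)\bigr)$. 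Since $|\lambda_1||\lambda_2|=1$, finiteness forces either $|\lambda_1|<1$ with $z(1)=w(1)=0$, or $|\lambda_2|<1$ with $x(1)=y(1)=0$.

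Substituting the parametrization of $\psi(1)$ from Lemma \ref{lem:6.3} into the vanishing conditions yields (as in Lemma \ref{lem:6.6}) a homogeneous linear system on $(z,w)$ whose nontriviality forces one of $a_1, a_2$ to equal $1$. Combined with $a_1 a_2 = |c|^2$, this gives $\{a_1,a_2\} = \{1,|c|^2\}$; the $|\lambda_i|<1$ requirement then translates to $0 < |c| < 1$, and working out the eigenvalue constraint $a^2+|b|^2+|c|^2+1-E^2 = 1+|c|^2$ gives $E = \pm\sqrt{a^2+|b|^2}$, establishing the "only if" direction of (1). For the converse, under these conditions the factorization $\bar c\lambda^2 - (|c|^2+1)\lambda + c = (\bar c\lambda - 1)(\lambda - c)$ yields $\lambda \in \{c,1/\bar c\}$ explicitly, and one constructs the required eigenvector directly.

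For (2), I would verify by direct computation that the vector $\psi(1) = {}^t(\tfrac{a+E}{b},0,0,1)$ satisfies $R\psi(1) = c\,\psi(1)$ (using $a^2+|b|^2-E^2 = 0$), so $\psi(n) = c^{n-1}\psi(1) = \tfrac{a+E}{b}e_1(n) + e_4(n)$, which is indeed in $l^2$ because $|c|<1$. The dimension is one since the $|\lambda_1|<1$ eigenspace of $R$ restricted to the image of the $(z,w)$-parametrization of $\psi(1)$ is one-dimensional. Part (3) then follows from the identities $v_+ - v_- = \tfrac{2\sqrt{a^2+|b|^2}}{b}\,e_1$ and $e_4 = v_+ - \tfrac{a+\sqrt{a^2+|b|^2}}{b}\,e_1$, where $v_\pm = \tfrac{a\pm\sqrt{a^2+|b|^2}}{b}e_1 + e_4$, exactly mirroring the corresponding identity in Lemma \ref{lem:6.2}.

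The main obstacle is the explicit diagonalization of $R$ in the fully generic regime $b,c\neq 0$: writing down $T$ and $T^{-1}$ and verifying their invertibility requires careful bookkeeping, particularly in the degenerate corner $E = -a = 0$ where several denominators vanish simultaneously, and one must confirm that no further $l^2$-eigenvectors appear when $R$ fails to be diagonalizable. This last point should be handled, as in the proof of Lemma \ref{lem:6.4}, by passing to Jordan form and observing that the polynomial factor from a Jordan block only aggravates the growth/decay dictated by $|\lambda_i|$.
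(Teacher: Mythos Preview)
Your proposal is correct and follows essentially the same route as the paper: diagonalize $R$ via an explicit $T$, read off the $l^2$-condition from $|\lambda_1||\lambda_2|=1$, and reduce the vanishing of the appropriate pair of coordinates of $T^{-1}\psi(1)$ to $\{a_1,a_2\}=\{1,|c|^2\}$ and $E^2=a^2+|b|^2$. The paper's explicit $T$ has $\det T = b^2|c|^2\Delta$, so under $b\neq 0$, $c\neq 0$, $\Delta>0$ the matrix $R$ is genuinely diagonalizable and no denominator in $T^{-1}$ involves $a\pm E$; hence your concerns about the corner $E=-a=0$ and about non-diagonalizability of $R$ do not actually materialize, and the Jordan-form contingency is unnecessary here.
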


\begin{proof}
We proceed by proving each part of the lemma.

\begin{enumerate}
    \item We suppose that \(\Delta > 0\) holds and that the equation \((H^\# - E)\psi = 0\) has a nontrivial \(l^2\)-solution. To analyze this, we introduce a matrix \(T\) defined as follows:

    \begin{align*}
    T = \begin{pmatrix}
    (a + E)\bar{c} & -a_2 + |c|^2 & (a + E)\bar{c} & -a_1 + |c|^2 \\
    -a_1 + |c|^2 & (E - a)c & -a_2 + |c|^2 & (E - a)c \\
    0 & bc & 0 & bc \\
    b\bar{c} & 0 & b\bar{c} & 0
    \end{pmatrix}.
    \end{align*}

    The determinant of \(T\) is \(\det T = b^2 |c|^2 \Delta\). Since \(\Delta > 0\), and given \(b \neq 0\) and \(c \neq 0\), we conclude that \(T\) is an invertible matrix. Its inverse is

    \begin{align*}
    T^{-1} = \begin{pmatrix}
    0 & -\frac{1}{a_1 - a_2} & \frac{E - a}{b (a_1 - a_2)} & -\frac{a_2 - |c|^2}{b \bar{c} (a_1 - a_2)} \\
    \frac{1}{a_1 - a_2} & 0 & \frac{a_1 - |c|^2}{b c (a_1 - a_2)} & -\frac{a + E}{b (a_1 - a_2)} \\
    0 & \frac{1}{a_1 - a_2} & -\frac{E - a}{b (a_1 - a_2)} & \frac{a_1 - |c|^2}{b \bar{c} (a_1 - a_2)} \\
    -\frac{1}{a_1 - a_2} & 0 & -\frac{a_2 - |c|^2}{b c (a_1 - a_2)} & \frac{a + E}{b (a_1 - a_2)}
    \end{pmatrix}.
    \end{align*}

    Under this transformation, we verify that \(T^{-1} R T = \text{diag}(\lambda_1, \lambda_1, \lambda_2, \lambda_2)\). We express the transformed solution \(T^{-1} \psi(n)\) as the transpose of
\begin{align*}
(x(n), y(n), z(n), w(n)).
\end{align*}
For \(n = 1\), we obtain:
    \begin{align*}
    T^{-1} \psi(1) = z \begin{pmatrix} 0 \\ \frac{a_1}{c (a_1 - a_2)} \\ 0 \\ -\frac{a_2}{c (a_1 - a_2)} \end{pmatrix} +
    w \begin{pmatrix} \frac{a^2 + |b|^2 + |c|^2 - E^2 - a_2}{b (a_1 - a_2)} \\ -\frac{(a + E) a_1}{b c (a_1 - a_2)} \\ -\frac{a^2 + |b|^2 + |c|^2 - E^2 - a_1}{b (a_1 - a_2)} \\ \frac{(a + E) a_2}{b c (a_1 - a_2)} \end{pmatrix},
    \end{align*}

    and we compute the \(l^2\)-norm of \(T^{-1} \psi\) as:

    \begin{align*}
    \| T^{-1} \psi \|^2 = \sum_{n \in \mathbb{N}} \left( |\lambda_1|^{2n} \big( |x(1)|^2 + |y(1)|^2 \big) + |\lambda_2|^{2n} \big( |z(1)|^2 + |w(1)|^2 \big) \right).
    \end{align*}

    We note that the product of the absolute values \(|\lambda_1| |\lambda_2|\) equals 1, and we conclude that \(\| T^{-1} \psi \|^2 < \infty\) holds if and only if either (i) \(|\lambda_1| < 1\) and \(z(1) = w(1) = 0\), or (ii) \(|\lambda_2| < 1\) and \(x(1) = y(1) = 0\).

    \begin{enumerate}
        \item We suppose that \(|\lambda_1| < 1\) and \(z(1) = w(1) = 0\) hold. From \(z(1) = 0\), we deduce that \(a_1 = a^2 + |b|^2 + |c|^2 - E^2\). By using the relationship between the roots and coefficients of the characteristic polynomial, we have \(a_1 + a_2 = a^2 + |b|^2 + |c|^2 + 1 - E^2\), so \(a_2 = 1\) and \(a_1 = |c|^2\). Additionally, since \(a_1 a_2 = \frac{|c|^2}{a^2 + |b|^2 + |c|^2 - E^2}\), we find \(E = \pm \sqrt{a^2 + |b|^2}\). We also note that \(|c| = |\lambda_1| < 1\), and from \(w(1) = -\frac{z b - w (a + E)}{b c (a_1 - a_2)} = 0\), we obtain \(z = \frac{a + E}{b} w\).

        \item We suppose that \(|\lambda_2| < 1\) and \(x(1) = y(1) = 0\) hold. From \(x(1) = 0\), we deduce that \(a_2 = a^2 + |b|^2 + |c|^2 - E^2\). By using \(a_1 + a_2 = a^2 + |b|^2 + |c|^2 + 1 - E^2\), we find \(a_1 = 1\) and \(a_2 = |c|^2\). Since \(a_1 a_2 = \frac{|c|^2}{a^2 + |b|^2 + |c|^2 - E^2}\), it follows that \(E = \pm \sqrt{a^2 + |b|^2}\). We note that \(|c| = |\lambda_2| < 1\), and from \(w(1) = -\frac{z b - w (a + E)}{b c (a_1 - a_2)} = 0\), we obtain \(z = \frac{a + E}{b} w\).
    \end{enumerate}

    \item Conversely, we assume that the conditions \(0 < |c| < 1\) and \(E = \pm \sqrt{a^2 + |b|^2}\) hold. Under these conditions, we have \(\Delta = (|c|^2 - 1)^2\), and since \(0 < |c| < 1\), we find that \(\Delta = -(|c|^2 - 1) > 0\). Thus, we confirm that \(a_1 = |c|^2\) and \(a_2 = 1\), ensuring that \(\Delta > 0\) and that \((H^\# - E)\psi = 0\) has nontrivial \(l^2\)-solutions.
\end{enumerate}
Next, we verify the remaining claims. Since we established that \(z = \frac{a + E}{b} w\), we compute \(\psi(1)\) as:
\begin{align*}
\psi(1) = w \begin{pmatrix} \frac{a + E}{b} \bar{c} \\ 0 \\ 0 \\ \bar{c} \end{pmatrix} = w \bar{c} \left( \frac{a + E}{b} e_1(1) + e_4(1) \right).
\end{align*}
Given \(a_1 = |c|^2\) and \(a_1 - a_2 = |c|^2 - 1 = -\Delta\), we derive the general form of \(\psi(n)\) as follows:
\begin{align*}
\psi(n) &= R^{n-1} \psi(1) = T (T^{-1} R T)^{n-1} T^{-1} \psi(1) \\
&= T \text{diag}\left( c^{n-1}, c^{n-1}, \left( \frac{1}{\bar{c}} \right)^{n-1}, \left( \frac{1}{\bar{c}} \right)^{n-1} \right) T^{-1} \psi(1).
\end{align*}
Applying \(T^{-1} \psi(1)\), we simplify:
\begin{align*}
T^{-1} \psi(1) = w \begin{pmatrix} \frac{|c|^2 - 1}{b (a_1 - a_2)} \\ 0 \\ 0 \\ 0 \end{pmatrix},
\end{align*}
and thus:
\begin{align*}
\psi(n) &= w T \begin{pmatrix} c^{n-1} \frac{|c|^2 - 1}{b (a_1 - a_2)} \\ 0 \\ 0 \\ 0 \end{pmatrix} \\
&= w \bar{c} \frac{|c|^2 - 1}{a_1 - a_2} \left( \frac{a + E}{b} e_1(n) + e_4(n) \right) = w \bar{c} \left( \frac{a + E}{b} e_1(n) + e_4(n) \right).
\end{align*}
Therefore, we identify one eigenvector with \(\frac{a + E}{b} e_1 + e_4\), and since the kernel is one-dimensional (due to the single free parameter \(w\)), we have \(\dim(\ker(H^\# - E)) = 1\). Finally, we confirm that the direct sum \(\ker(H^\# - \sqrt{a^2 + |b|^2}) \oplus \ker(H^\# - (-\sqrt{a^2 + |b|^2})) = \langle e_1, e_4 \rangle\), as each kernel is spanned by a linearly independent eigenvector, completing the proof.
\end{proof}

 \begin{prop} \label{lem:5.8}
The following holds true.
\begin{enumerate}
    \item The equation \((H^\# - E)\psi = 0\) has a nontrivial \(l^2\)-solution if and only if one of the following conditions holds:
    \begin{enumerate}
        \item The parameters satisfy \(c = 0\) and \(E = \pm \sqrt{a^2 + |b|^2 + 1}\). Under this condition, the dimension of the kernel is infinite, i.e., \(\dim(\ker(H^\# - E)) = \infty\).
        \item The parameters satisfy \(b = 0\), \(E = a = 0\), and \(0 \leq |c| < 1\). Under this condition, the dimension of the kernel is 2, i.e., \(\dim(\ker(H^\# )) = 2\).
        \item The parameters satisfy \(0 \leq |c| < 1\) and \(E = \sqrt{a^2 + |b|^2} \neq 0\). Under this condition, the dimension of the kernel is 1, i.e., \(\dim(\ker(H^\# - E)) = 1\).
    \end{enumerate}
    
    \item The basis vectors \(e_1\) and \(e_4\) induce the following linear isomorphisms:
    \begin{enumerate}
        \item We suppose that the conditions \(b = 0\) and \(E = a = 0\) hold. Under these conditions, we have \(\ker(H^\#) \cong \mathbb{C}^2\).
        \item We suppose that the conditions \(0 \leq |c| < 1\) and \(E = \sqrt{a^2 + |b|^2} \neq 0\) hold. Under these conditions, we have \(\ker(H^\# - \sqrt{a^2 + |b|^2}) \oplus \ker(H^\# - (-\sqrt{a^2 + |b|^2})) \cong \mathbb{C}^2\).
    \end{enumerate}
    
    \item Under the linear isomorphisms described above, we can express the representation matrix of \(H^\#\) as follows:
    \begin{align*}
    \begin{pmatrix}
    a & \bar{b} \\
    b & -a
    \end{pmatrix}
    = \text{Re}(b) \sigma_1 + \text{Im}(b) \sigma_2 + a \sigma_3,
    \end{align*}
    where $\sigma_1$, $\sigma_2$ and $\sigma_3$ are Pauli matrices
    \begin{align*}
    \sigma_1 = \begin{pmatrix} 0 & 1 \\ 1 & 0 \end{pmatrix}, \quad
    \sigma_2 = \begin{pmatrix} 0 & -\sqrt{-1} \\ \sqrt{-1} & 0 \end{pmatrix}, \quad
    \sigma_3 = \begin{pmatrix} 1 & 0 \\ 0 & -1 \end{pmatrix}.
    \end{align*}
\end{enumerate}
\end{prop}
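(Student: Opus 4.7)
The plan is to synthesize the case analyses in Lemmas \ref{lem:6.1}--\ref{lem:6.7} into a unified statement and then perform one direct matrix computation. For part (1), I would split on whether $c$ vanishes. If $c = 0$, Lemmas \ref{lem:6.1} and \ref{lem:6.2} together identify exactly the eigenvalues $E = \pm\sqrt{a^{2} + |b|^{2} + 1}$ (with infinite-dimensional kernel, matching condition (a)) and $E^{2} = a^{2} + |b|^{2}$ (matching conditions (b) and (c)). If $c \neq 0$, Lemma \ref{lem:6.4} eliminates every $E$ with $\Delta \leq 0$; Lemmas \ref{lem:6.5}, \ref{lem:6.6}, and \ref{lem:6.7} then exhaust the remaining subcases, all of which force $0 < |c| < 1$ together with $E^{2} = a^{2} + |b|^{2}$. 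The kernel dimensions $(\infty, 2, 1)$ also read off directly from these lemmas.

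For part (2), I would verify in each surviving case that $e_{1}$ and $e_{4}$ span the relevant kernel or direct sum. For (2a), Lemma \ref{lem:6.1}(2) (when $c = 0$) and Lemma \ref{lem:6.5} (when $0 < |c| < 1$) both give $\ker(H^{\#}) = \langle e_{1}, e_{4} \rangle$. For (2b), I subdivide further: when $b = 0$, Lemmas \ref{lem:6.1}(2), \ref{lem:6.5}, and \ref{lem:6.6} show that each individual kernel is one-dimensional and spanned by $e_{1}$ or $e_{4}$, so their direct sum is $\langle e_{1}, e_{4} \rangle$; when $b \neq 0$, the desired direct-sum identity is asserted by Lemma \ref{lem:6.2} (for $c = 0$) and Lemma \ref{lem:6.7}(3) (for $0 < |c| < 1$). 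Because $|c| < 1$ ensures $e_{1}, e_{4} \in l^{2}(\mathbb{N}, \mathbb{C}^{4})$ are linearly independent, the assignments $e_{1} \mapsto {}^{t}(1, 0)$ and $e_{4} \mapsto {}^{t}(0, 1)$ yield the claimed isomorphisms with $\mathbb{C}^{2}$.

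For part (3), I would compute $H^{\#} e_{1}$ and $H^{\#} e_{4}$ directly from the recursion $(H^{\#}\psi)(n) = A\psi(n-1) + V\psi(n) + A^{*}\psi(n+1)$. Because $A$ annihilates the first and fourth standard basis vectors of $\mathbb{C}^{4}$, only the $V\psi(n)$ and $A^{*}\psi(n+1)$ contributions survive, and the $\bar{c}$-entry of $V$ cancels against the $-1$-entry of $A^{*}$ (respectively the $-c$ and $+1$ entries in the fourth row) owing to the geometric factor $c^{n-1}$. This yields $H^{\#} e_{1} = a e_{1} + b e_{4}$ and $H^{\#} e_{4} = \bar{b} e_{1} - a e_{4}$, and the resulting $2 \times 2$ matrix is immediately recognized as $\operatorname{Re}(b)\sigma_{1} + \operatorname{Im}(b)\sigma_{2} + a \sigma_{3}$.

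The main obstacle is bookkeeping across the case boundary $c = 0$: one must confirm that the convention $0^{0} = 1$ reconciles the $c = 0$ kernel descriptions in Lemma \ref{lem:6.1} with the eigenvector formulas coming from Lemmas \ref{lem:6.5}--\ref{lem:6.7}, and that $e_{1}, e_{4}$ indeed lie in $l^{2}$ precisely when $|c| < 1$, which is exactly the threshold isolated in part (1). Once these compatibility checks are in place, the telescoping computation in part (3) is a mechanical application of the explicit forms of $V$ and $A$.
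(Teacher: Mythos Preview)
Your proposal is correct and follows the same approach as the paper: the paper's proof is a single sentence stating that the proposition follows from Lemmas~\ref{lem:6.1}--\ref{lem:6.7} together with straightforward calculations, and your case-by-case synthesis of those lemmas for parts (1)--(2) together with the direct computation of $H^{\#}e_1$ and $H^{\#}e_4$ for part (3) is exactly what that sentence means in practice. Your explicit verification that the $\bar c$ and $-c$ entries of $V$ cancel against the entries of $A^{*}$ via the geometric factor $c^{n-1}$ is the ``straightforward calculation'' the paper leaves implicit.
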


\begin{proof}
We derive this proposition from the results of the previous lemmas (\ref{lem:6.1}, \ref{lem:6.2}, \ref{lem:6.3}, \ref{lem:6.4}, \ref{lem:6.5}, \ref{lem:6.6}, \ref{lem:6.7}) and straightforward calculations.
\end{proof}


\end{document}